\newenvironment{proof}{{\bf Proof. }}{\par}{\bigskip}
\newtheorem{theo}{Theorem}[section]
\newtheorem{defi}[theo]{Definition}
\newtheorem{lem}[theo]{Lemma}
\newtheorem{prop}[theo]{Proposition}
\newtheorem{rem}[theo]{Remark}
\newtheorem{coro}[theo]{Corollary}
\newtheorem{exam}[theo]{Example}
\newcommand{\ggot}{\ensuremath{\mathfrak{g}}}
\newcommand{\hgot}{\ensuremath{\mathfrak{h}}}
\newcommand{\kgot}{\ensuremath{\mathfrak{k}}}
\newcommand{\ngot}{\ensuremath{\mathfrak{n}}}
\newcommand{\glgot}{\ensuremath{\mathfrak{gl}}}
\newcommand{\pgot}{\ensuremath{\mathfrak{p}}}
\newcommand{\qgot}{\ensuremath{\mathfrak{q}}}
\newcommand{\slgot}{\ensuremath{\mathfrak{sl}}}
\newcommand{\tgot}{\ensuremath{\mathfrak{t}}}
\newcommand{\Rgot}{\ensuremath{\mathfrak{R}}}
\newcommand{\Acal}{\ensuremath{\mathcal{A}}}
\newcommand{\Ccal}{\ensuremath{\mathcal{C}}}
\newcommand{\Dcal}{\ensuremath{\mathcal{D}}}
\newcommand{\Ecal}{\ensuremath{\mathcal{E}}}
\newcommand{\Fcal}{\ensuremath{\mathcal{F}}}
\newcommand{\Lcal}{\ensuremath{\mathcal{L}}}
\newcommand{\Ocal}{\ensuremath{\mathcal{O}}}
\newcommand{\Ucal}{\ensuremath{\mathcal{U}}}
\newcommand{\Wcal}{\ensuremath{\mathcal{W}}}
\newcommand{\Z}{\ensuremath{\mathbb{Z}}}
\newcommand{\C}{\ensuremath{\mathbb{C}}}
\newcommand{\R}{\ensuremath{\mathbb{R}}}
\newcommand{\N}{\ensuremath{\mathbb{N}}}
\newcommand{\Lbb}{\ensuremath{\mathbb{L}}}
\newcommand{\mm}{\ensuremath{\hbox{\rm \bf m}}}
\newcommand{\croc}{\ensuremath{\hookrightarrow}}
\newcommand{\indice}{\ensuremath{\hbox{\rm Index}}}
\newcommand{\Rfor}{\ensuremath{\hat R}}
\newcommand{\Char}{\ensuremath{\hbox{\rm Char}}}
\newcommand{\hatom}{\ensuremath{\hbox{\rm Hom}}}
\newcommand{\RR}{\ensuremath{\mathrm{RR}}}
\newcommand{\Thom}{\operatorname{Thom}}
\newcommand{\Ko}{\ensuremath{{\mathbf K}^0}}
\newcommand{\Sym}{\ensuremath{{\rm Sym}}}
\def \K {{\rm \bf K}}
\def \T {{\rm T}}
\def \U {{\rm U}}
\def \wtde {\widetilde}
\def \hom {{\rm hom}}
\def \semi {{\rm ss}}
\def \clif {{\bf c}}
\begin{document}

\title{Stability property of multiplicities of group representations}

\author{Paul-Emile PARADAN\footnote{Institut Montpelli\'{e}rain Alexander Grothendieck, CNRS UMR 5149, Universit\'{e} de Montpellier, \texttt{paradan@math.univ-montp2.fr}}}

\maketitle

{\small
\tableofcontents}

%%%%%%%%%%%%%%%%%%%%%%%%%%%%%%%%%%%%%%%%%%%%%%%%%%%%%%
%%%%%%%%%%%%%%%%%%%%%%%%%%%%%%%%%%%%%%%%%%%%%%%%%%%%%%%%%%%%%%
%%%%%%%%%%%%%%%%%%%%%%%%%%%%%%%%%%%%%%%%%%%%%%%%%%%%%%%%%%%%%%
\section{Introduction}
%%%%%%%%%%%%%%%%%%%%%%%%%%%%%%%%%%%%%%%%%%%%%%%%%%%%%%%%%%%%%%
%%%%%%%%%%%%%%%%%%%%%%%%%%%%%%%%%%%%%%%%%%%%%%%%%%%%%%%%%%%%%%
%%%%%%%%%%%%%%%%%%%%%%%%%%%%%%%%%%%%%%%%%%%%%%%%%%%%%%%%%%%%%%

Recently, Stembridge \cite{Stembridge14} has proposed to generalize a classical result of Murnaghan \cite{Murnaghan38} 
by introducing the notion of {\bf stability}, a notion that can be formalized as follows. 

Let $\mm : \Ccal\to \N$ be a map defined on a semi-group $\Ccal$. An element $x\in \Ccal$ is called {\bf stable} if $\mm(x)>0$ 
and if the sequence $\mm(y+nx)$ converge for any $y\in \Ccal$, and is called {\bf semi-stable} if $\mm(nx)=1$ for all $n\in\N$. 
In this paper we will study weak version of stability: $x\in \Ccal$ is  called {\bf weakly stable} if the sequence 
$\mm(y+nx)$ is bounded for any $y\in \Ccal$, and is called {\bf weakly semi-stable} if the sequence $\mm(nx)$ is bounded. 

In the case were $\mm$ is the map defined by the Kronecker coefficients, Stembridge \cite{Stembridge14} has shown that stable 
points are semi-stable, and the converse statement was proved by Sam and Snowden \cite{Sam-Snowden}.

The main purpose of this paper is to study these stability properties in a more general setting. 
Let $\rho: G\to \tilde{G}$ be a morphism between compact connected Lie groups. We can associate to it a map 
$\mm_\rho: \Ccal_{G}\times \Ccal_{\tilde{G}}\to \N$ where $\Ccal_G$, and $\Ccal_{\tilde{G}}$ are the semi-groups 
of dominant weights that parametrized the irreducible representations of $G$ and $\tilde{G}$, and 
$\mm_\rho(\mu,\tilde{\mu})$ is defined as the multiplicity of the representation $V_{\mu}^G$ in the restriction $V_{\tilde{\mu}}^{\tilde{G}}\vert_{G}$. In this context we generalize the results of Stembridge, Sam and Snowden by proving 
that ``stability''$=$``semi-stability'' and ``weak stability''$=$``weak semi-stability''.

When $x\in\Ccal$ is semi-stable we can define a map $\mm^x: \Ccal\to \N$ : the value $\mm^x(y)$ is the limit of 
the sequence $\mm(y+nx)$ when $n\to\infty$. We will give a formula for these stretched coefficients when we work 
with the map $\mm_\rho$. It generalizes some computations done by Brion \cite{Brion93}, Manivel \cite{Manivel97} and 
Montagard \cite{Montagard96} in the plethysm case. 
In fact we are able to give a formula of $\mm(y+nx)$ when $x$ is weakly stable and $n$ is large enough.

Another interesting question is to produce examples of stable elements. In the case of Kronecker coefficients, 
Vallejo \cite{Vallejo14} and Manivel \cite{Manivel14} introduced a notion of ``additive matrix'' that permits 
them to parametrize a large family of stable elements. In Section 5 we show that this notion can be transferred 
to the morphism case $\rho$ (see Definition \ref{def:lambda-adapted}), and we  
compute the stretched coefficients associated to it.

\medskip

\medskip

The method used in this paper is explained in the next section. The overall strategy is to obtain these 
stability properties and the computation of the stretched coefficients as an application of the credo 
``[Q,R]=0" of Guillemin-Sternberg \cite{Guillemin-Sternberg82}.

\medskip

\medskip

{\bf Acknowledgements}.  The author would like to thank Pierre-Louis Montagard and Boris Pasquier 
for many valuable discussions, and Mich\`{e}le Vergne for her insightful comments.

%%%%%%%%%%%%%%%%%%%%%%%%%%%%%%%%%%%%%%%%%%%%%%%%%%%%%%%%%%%%%%
%%%%%%%%%%%%%%%%%%%%%%%%%%%%%%%%%%%%%%%%%%%%%%%%%%%%%%%%%%%%%%
\section{Statement of the results}
%%%%%%%%%%%%%%%%%%%%%%%%%%%%%%%%%%%%%%%%%%%%%%%%%%%%%%%%%%%%%%
%%%%%%%%%%%%%%%%%%%%%%%%%%%%%%%%%%%%%%%%%%%%%%%%%%%%%%%%%%%%%%

Let $M$ be a compact complex manifold acted on by a compact Lie group $G$. Let $\Lcal\to M$ be a 
$G$-equivariant holomorphic line bundle that is assumed to be {\bf ample}. Note that the $G$-action on 
$\Lcal\to M$ extends to the complex reductive group $G_\C$ \cite{Guillemin-Sternberg82}.

In this context, we are interested in the family of $G$-modules $\Gamma(M,\Lcal^{\otimes n})$ formed by the holomorphic sections, and more particularly to the sequence $\mathbf{H}(n):=\dim \Gamma(M,\Lcal^{\otimes n})^G,  n\geq 1$.
For any holomorphic $G$-complex vector bundle $\Ecal\to M$, we consider also the sequence 
$$
\mathbf{H}_\Ecal(n):=\dim \Gamma(M,\Ecal\otimes\Lcal^{\otimes n})^G, \ n\geq 1.
$$

Our main result, that we will detail in the next Section, can be summarized as follows : 
if the sequence $\mathbf{H}(n)$ is bounded, then the sequence $\mathbf{H}_\Ecal(n)$ is bounded 
for any holomorphic $G$-complex vector bundle $\Ecal$ and we can 
compute its value for large $n$.

%%%%%%%%%%%%%%%%%%%%%%%%%%%%%%%%%%%%%%%%%%%%%%%%%%%%%%%%%%%%%%
\subsection{Stability result}
%%%%%%%%%%%%%%%%%%%%%%%%%%%%%%%%%%%%%%%%%%%%%%%%%%%%%%%%%%%%%%

Since the line bundle $\Lcal$ is ample, there exists an Hermitian metric $h$ on $\Lcal$ such that 
the curvature $\Omega:=i(\nabla^h)^2$ of its Chern connection $\nabla^h$ is a K\"{a}hler class :  $\Omega$ is a symplectic form on $M$ that is compatible with the complex structure. By an averaging process we can assume that the $G$-action 
leaves the metric and connection invariant. Hence we have a moment map $\Phi:M\to\ggot^*$ 
defined by Kostant's relations 
\begin{equation}\label{eq:kostant-rel}
    L(X)-\iota(X_M)\nabla^h =i\langle\Phi,X\rangle\quad \mathrm{for\ all}\quad X\in\ggot.
\end{equation}
Here $L(X)$ is the Lie derivative on the sections of $\Lcal$, and $X_M(m):=\frac{d}{ds}e^{-s X}\cdot m\vert_{s=0}$ is the vector field generated by $X\in\ggot$.

The $[Q,R]=0$ Theorem of Meinrenken \cite{Meinrenken98} and Meinrenken-Sjamaar \cite{Meinrenken-Sjamaar} 
says that the moment map $\Phi$ gives a 
geometric interpretation of the sequence $\mathbf{H}(n)$.  An important object here is the reduced space
$$
M_0:=\Phi^{-1}(0)/G
$$
which is homeomorphic to the Mumford GIT quotient $M/\!\!/ G_\C$ \cite{Kirwan84}.

A special case of the $[Q,R]=0$ Theorem is the following basic but important fact that is explained in Section \ref{sec:Kahler}.

\begin{prop}\label{prop:QR-sjamaar}
We have the following equivalences:

$\bullet$ $\mathbf{H}(n)=0,\ \forall n\geq 1\ \Longleftrightarrow \ M_0=\emptyset$,

$\bullet$ $\mathbf{H}(n)$ is non-zero and bounded $\Longleftrightarrow \ M_0=\{pt\}$.

When $M_0=\{pt\}$, we have $\mathbf{H}(n):=\dim [\Lcal^{\otimes n}\vert_{m_o}]^H$ 
where $m_o\in \Phi^{-1}(0)$ and $H$ is the stabilizer subgroup of $m_o$. In particular if 
$\mathbf{H}(1)\neq 0$, then $\mathbf{H}(n)=1$ for all $n\geq 1$.
\end{prop}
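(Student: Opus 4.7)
The plan is to reduce both equivalences to the singular $[Q,R]=0$ theorem of Meinrenken and Sjamaar, which, in the setting where $\Lcal\to M$ is ample and hence descends to an (orbifold) line bundle $\Lcal_0\to M_0$ via Kempf--Ness, gives the identification
\[
\mathbf{H}(n) \;=\; \RR(M_0,\Lcal_0^{\otimes n}).
\]
Here the Riemann--Roch number is interpreted on the possibly singular symplectic quotient $M_0=\Phi^{-1}(0)/G$. This formula reduces everything to a question about the geometry and dimension of $M_0$.

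For the first equivalence, the direction $M_0=\emptyset\Rightarrow\mathbf{H}(n)=0$ is immediate. For the converse, I use Kirwan's homeomorphism $M_0\simeq M/\!\!/G_{\C}$: non-emptyness of the GIT quotient is equivalent to the existence of a semistable point, hence to the existence of a $G_\C$-invariant holomorphic section of some $\Lcal^{\otimes n}$; since any $G$-invariant holomorphic section on a compact complex manifold is automatically $G_\C$-invariant, this is equivalent to $\mathbf{H}(n)\neq 0$ for some $n$.

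For the second equivalence, assume $M_0\neq\emptyset$ and invoke Kirwan's connectedness theorem to ensure $M_0$ is connected. If $\dim M_0\geq 1$, then ampleness of $\Lcal_0$ combined with Kawasaki--Riemann--Roch forces $\RR(M_0,\Lcal_0^{\otimes n})$ to agree, for $n$ large, with a polynomial in $n$ of degree $\dim_\C M_0\geq 1$, hence to be unbounded. Therefore ``bounded and non-zero'' forces $M_0=\{pt\}$. In that case $\Phi^{-1}(0)$ is a single orbit $G\cdot m_o\simeq G/H$, the reduced space is the orbifold point $\{m_o\}/H$, and the Riemann--Roch number on such an orbifold point is exactly the dimension of $H$-invariants:
\[
\mathbf{H}(n) \;=\; \dim\bigl[\Lcal^{\otimes n}|_{m_o}\bigr]^H.
\]
For the final assertion, if $\mathbf{H}(1)\neq 0$ then the $H$-action on the one-dimensional fiber $\Lcal|_{m_o}$ admits a non-zero invariant vector, hence is trivial; it is then trivial on every tensor power $\Lcal^{\otimes n}|_{m_o}$, so $\mathbf{H}(n)=1$ for all $n\geq 1$.

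The main obstacle is purely technical: $M_0$ is typically singular and the above ingredients---the $[Q,R]=0$ identification, the descent of ampleness, the polynomial growth of $\RR$, and the computation of $\RR$ at an orbifold point---must all be taken in their orbifold or stratified-space versions. None of these is problematic once one appeals to the singular $[Q,R]=0$ machinery of Meinrenken--Sjamaar, so the argument is in essence a bookkeeping exercise built on top of their theorem.
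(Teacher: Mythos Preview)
Your overall strategy mirrors the paper's: reduce to the geometry of $M_0$ via a $[Q,R]=0$ statement, then argue by growth in $n$. But two technical choices create genuine gaps rather than mere bookkeeping.

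First, you invoke the Meinrenken--Sjamaar theorem, which is an \emph{index} statement: it identifies $[\RR_G(M,\Lcal^{\otimes n})]^G$ with a Riemann--Roch number on $M_0$. The proposition, however, is about $\mathbf{H}(n)=\dim H^0(M,\Lcal^{\otimes n})^G$, and these coincide only when the higher Dolbeault cohomology of $\Lcal^{\otimes n}$ vanishes. Ampleness guarantees this for large $n$ but not for all $n\geq 1$, and the proposition needs the identification at every $n$ (in particular for the formula $\mathbf{H}(n)=\dim[\Lcal^{\otimes n}\vert_{m_o}]^H$ and the conclusion drawn from $\mathbf{H}(1)\neq 0$). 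The paper uses instead Sjamaar's holomorphic-sections isomorphism $\Gamma(M,\Lcal^{\otimes n})^G\simeq\Gamma(M_0,q^G_*\Lcal^{\otimes n})$, valid for every $n$, which is the correct tool here.

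Second, you treat $M_0$ as an orbifold and appeal to Kawasaki--Riemann--Roch for the growth estimate. In general $M_0$ is a genuinely singular stratified space, not an orbifold, so Kawasaki's formula is not available; this is not a detail one can wave away as bookkeeping. The paper avoids the issue entirely: Sjamaar shows that a suitable power $\Lbb_0=\Lcal^{\otimes q}\vert_{\Phi^{-1}(0)}/G$ is Grauert-positive and embeds $M_0$ as a projective variety. Then $\mathbf{H}(nq)=\dim\Gamma(M_0,\Lbb_0^{\otimes n})$ is the Hilbert function of that projective variety, hence eventually a polynomial of degree $\dim_\C M_0$, giving unboundedness when $\dim M_0\geq 1$ without any orbifold index theory. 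Your GIT argument for the first equivalence and your deduction of the final claim from triviality of the $H$-action on $\Lcal\vert_{m_o}$ are fine and match the paper.
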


Let us recall the geometric criterion that characterizes the fact that the reduced space $M_0$ 
is a singleton. We consider the tangent space $\T_{m_o}M$ attached to $m_o\in \Phi^{-1}(0)$: it is a complex 
$H$-module where $H$ is the stabilizer subgroup of $m_o$ acts. 
We consider the complex subspace $\ggot_\C\cdot m_o\subset \T_{m_o}M$ which is the tangent space at $m_o$ 
of the complex orbit $G_\C \cdot m_o$.

The following $H$-module is important for our purpose:
\begin{equation}\label{eq:symplectic-orthogonal}
\mathbb{W}:=\T_{m_o}M/\ggot_\C\cdot m_o.
\end{equation}

Let us denote $\Sym(\mathbb{W}^*)$ the $H$-module formed by the polynomial functions on $\mathbb{W}$. The following standard fact is explained in Section \ref{sec:Kahler}.

\begin{prop}\label{prop:equivalences}
We have  $\Phi^{-1}(0)=G m_o$ if and only if the $H$-multiplicities of $\Sym(\mathbb{W}^*)$ are finite.
\end{prop}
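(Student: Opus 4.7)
My approach is to apply the Marle--Guillemin--Sternberg local normal form near the orbit $Gm_o$. In a $G$-invariant tubular neighborhood of $Gm_o$, the symplectic manifold $M$ with moment map $\Phi$ is equivariantly symplectomorphic to a neighborhood of the zero section of the local model
\[
Y := G \times_H (\hgot^\perp \oplus \mathbb{W}),
\]
with $\hgot^\perp$ an $\Ad(H)$-invariant complement of $\hgot$ in $\ggot$ and moment map $\Phi_Y([g,\eta,w]) = g\cdot(\eta + \Phi_\mathbb{W}(w))$, where $\Phi_\mathbb{W}:\mathbb{W}\to\hgot^*$ is the quadratic moment map of the linear unitary $H$-action on the symplectic slice $\mathbb{W}$. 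Reading off the zero fibre of this model identifies $\Phi^{-1}(0)$, in a neighborhood of $Gm_o$, with $G\cdot\Phi_\mathbb{W}^{-1}(0)$; hence the \emph{local} identity $\Phi^{-1}(0) = Gm_o$ near $Gm_o$ is equivalent to $\Phi_\mathbb{W}^{-1}(0) = \{0\}$.

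To upgrade this to the global equality stated in the proposition, I would invoke Kirwan's connectedness theorem, which applies because $\Phi$ is proper on the compact K\"ahler manifold $M$ and ensures that $\Phi^{-1}(0)$ is connected whenever non-empty. Granting $\Phi_\mathbb{W}^{-1}(0) = \{0\}$, the local model makes $Gm_o$ open in $\Phi^{-1}(0)$; compactness of the orbit makes it closed; connectedness then forces $Gm_o = \Phi^{-1}(0)$. The reverse implication follows directly from the local description of the zero fibre.

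It remains to translate the geometric condition $\Phi_\mathbb{W}^{-1}(0) = \{0\}$ into the algebraic finiteness statement. By the Kempf--Ness correspondence, $\Phi_\mathbb{W}^{-1}(0)/H$ is homeomorphic to the affine GIT quotient $\mathrm{Spec}(\Sym(\mathbb{W}^*)^H)$, so the vanishing of the symplectic null fibre is equivalent to $\Sym(\mathbb{W}^*)^H = \C$. Reductivity of $H$ ensures that each multiplicity module $\hom_H(V_\lambda, \Sym(\mathbb{W}^*))$ is finitely generated over the invariant ring $\Sym(\mathbb{W}^*)^H$; hence if $\Sym(\mathbb{W}^*)^H = \C$ every multiplicity is finite, and conversely if every multiplicity is finite then $\Sym(\mathbb{W}^*)^H$ is itself a finite-dimensional graded $\C$-algebra, forcing $\Sym(\mathbb{W}^*)^H = \C$. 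The main obstacle I anticipate is this last algebraic step: the finite generation of multiplicity modules over the invariant ring is the non-trivial input that propagates finiteness from the trivial isotype to every other irreducible representation.
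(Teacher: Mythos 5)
Your argument is correct and follows essentially the same route as the paper: the Marle--Guillemin--Sternberg local model reduces the statement to $\Phi_{\mathbb{W}}^{-1}(0)=\{0\}$, which is then translated into finiteness of the $H$-multiplicities of $\Sym(\mathbb{W}^*)$ (the paper quotes this last equivalence as a standard fact, whereas you supply the Kempf--Ness and covariant-module argument, and you also make explicit the connectedness of $\Phi^{-1}(0)$ underlying the local-to-global step). The one point you take for granted that the paper actually proves is the identification of $\mathbb{W}=\T_{m_o}M/\ggot_\C\cdot m_o$ with the symplectic slice $(\ggot\cdot m_o)^{\perp}/\ggot\cdot m_o$, which is needed before $\mathbb{W}$ can legitimately play the role of the slice in the local normal form.
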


\medskip

Our main contribution is the following stability result.

\medskip

\noindent {\bf Theorem A} \ {\em Let $\Ecal\to M$ be an holomorphic $G$-vector vector bundle.
\begin{itemize}
\item If  $\mathbf{H}(n)=0,\, \forall n\geq 1$, then $\mathbf{H}_\Ecal(n)=0$ if $n$ is large enough.

\item If $\mathbf{H}(n)$ is bounded and non-zero, then 
$$
\mathbf{H}_\Ecal(n)=\dim \left[\Sym(\mathbb{W}^*)\otimes \Ecal\vert_{m_o}\otimes\Lcal^{\otimes n}\vert_{m_o}\right]^H
$$
for $n$ large enough. In particular the sequence $\mathbf{H}_\Ecal(n)$ is bounded.

\item If $\mathbf{H}(n)$ is bounded and $\mathbf{H}(1)\neq 0$, we have  $\mathbf{H}(n)=1,\, \forall n\geq 1$. Moreover 
 $\mathbf{H}_\Ecal(n)$ is increasing and equal to $\dim \, [\Sym(\mathbb{W}^*)\otimes \Ecal\vert_{m_o}]^H$ 
 for $n$ large enough.
\end{itemize}
}

\bigskip

In the next section we will give a consequence of Theorem A  to the branching laws between compact Lie groups.

%%%%%%%%%%%%%%%%%%%%%%%%%%%%%%%%%%%%%%%%%%%%%%%%%%%%%%%%%%%%%%
\subsection{Stability of branching law coefficients}\label{sec:stability-branching}
%%%%%%%%%%%%%%%%%%%%%%%%%%%%%%%%%%%%%%%%%%%%%%%%%%%%%%%%%%%%%%

Let $\rho: G\to\tilde{G}$ be a morphism between two connected compact Lie groups. We denote $d\rho: \ggot \to \tilde{\ggot}$ the induced 
Lie algebras morphism, and $\pi : \tilde{\ggot}^* \to \ggot^*$ the dual map.

Select maximal tori $T$ in $G$ and $\tilde{T}$ in $\tilde{G}$, and
Weyl chambers $\tilde{\tgot}^*_{\geq 0}$ in $\tilde{\tgot}^*$ and
$\tgot^*_{\geq 0}$ in  $\tgot^*$, where $\tgot$ and $\tilde{\tgot}$ denote the Lie algebras of $T$, resp. $\tilde{T}$.

Let $\tilde{\Lambda}_{\geq 0}\subset\tilde{\tgot}^*_{\geq 0}$, $\Lambda_{\geq 0}\subset\tgot^*_{\geq 0}$ be the
set of dominant weights. For any $(\mu,\tilde{\mu})\in \Lambda_{\geq 0}\times\tilde{\Lambda}_{\geq 0}$,
we denote $V^G_\mu$, $V^{\tilde{G}}_{\tilde{\mu}}$ the corresponding irreducible representations of $G$ and $\tilde{G}$, 
and we define
\begin{equation}\label{eq:multiplicity-K-tilde}
\mm(\mu,\tilde{\mu}) \in\N
\end{equation}
as the multiplicity of $V_{\mu}^G$ in $V^{\tilde{G}}_{\tilde{\mu}}\vert_{G}$.

For any weights $(\mu,\tilde{\mu})$, we denote $(\tilde{G}\tilde{\mu})_\mu$ the reduction of the $G$-Hamiltonian 
manifold $\tilde{G}\tilde{\mu}$ at $\mu$ : in other words $(\tilde{G}\tilde{\mu})_\mu:=
\tilde{G}\tilde{\mu}\cap\pi^{-1}(G\mu)/G$. We start with the following particular case of 
Proposition \ref{prop:QR-sjamaar} 

\begin{prop}\label{prop:theorem-B}
We have the following equivalences
\begin{itemize}
\item  $\mm(n\mu,n\tilde{\mu})=0,\ \forall n\geq 1\ \Longleftrightarrow\ (\tilde{G}\tilde{\mu})_\mu=\emptyset$

\item $\mm(n\mu,n\tilde{\mu})$ is bounded and non-zero \ $\Longleftrightarrow\ (\tilde{G}\tilde{\mu})_\mu=\{pt\}$.

\end{itemize}
\end{prop}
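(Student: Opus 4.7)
The strategy is to realize the sequence $\mm(n\mu, n\tilde\mu)$ as the sequence $\mathbf{H}(n)$ attached to a well-chosen K\"ahler Hamiltonian $G$-manifold whose symplectic reduction at $0$ is precisely $(\tilde G\tilde\mu)_\mu$, and then to invoke Proposition \ref{prop:QR-sjamaar} directly.

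Concretely, I would take the compact K\"ahler manifold $M := \tilde G\tilde\mu \times \overline{G\mu}$, where $\overline{G\mu}$ denotes the coadjoint orbit $G\mu \subset \ggot^*$ equipped with the opposite Kirillov--Kostant--Souriau symplectic form and the compatible complex structure; equivalently, $\overline{G\mu}$ is biholomorphic to the coadjoint orbit through $\mu^* = -w_0\mu$. The group $G$ acts on $M$ diagonally, via $\rho$ on the first factor, and the induced $G$-moment map is $\Phi(\tilde\xi,\eta) = \pi(\tilde\xi)-\eta$. Thus $\Phi^{-1}(0)$ is the graph of $\pi|_{\tilde G\tilde\mu}$ over $\tilde G\tilde\mu\cap\pi^{-1}(G\mu)$, so passing to the $G$-quotient yields $M_0 \cong (\tilde G\tilde\mu)_\mu$.

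I would then equip $M$ with the ample $G$-equivariant line bundle $\Lcal := \Lcal_{\tilde\mu}\boxtimes\overline{\Lcal_\mu}$ built from the prequantum (Borel--Weil) line bundles of the two factors. By Borel--Weil one has $\Gamma(\tilde G\tilde\mu,\Lcal_{\tilde\mu}^{\otimes n}) \cong V_{n\tilde\mu}^{\tilde G}$ and $\Gamma(\overline{G\mu},\overline{\Lcal_\mu}^{\otimes n}) \cong (V_{n\mu}^G)^*$, so Schur's lemma gives $\mathbf{H}(n) = \dim\bigl[V_{n\tilde\mu}^{\tilde G}|_G \otimes (V_{n\mu}^G)^*\bigr]^G = \mm(n\mu,n\tilde\mu)$. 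The two equivalences of the proposition then follow by substituting these identifications into the corresponding statements of Proposition \ref{prop:QR-sjamaar} applied to $(M,\Lcal)$.

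The main subtlety lies in reconciling sign and orientation conventions: one must check that the Kostant relation \eqref{eq:kostant-rel} is compatible with the sign of the moment map on the opposite factor $\overline{G\mu}$, and that the Borel--Weil identification on that factor genuinely yields the contragredient representation $(V_{n\mu}^G)^*$ rather than $V_{n\mu}^G$ itself. Once these conventions are aligned the argument is essentially formal, and no further analysis is needed beyond Proposition \ref{prop:QR-sjamaar}.
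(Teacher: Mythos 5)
Your proposal is correct and follows essentially the same route as the paper: in Section \ref{sec:proof-theorem-B} the author works with $P=\tilde{G}\tilde{\mu}\times (G\mu)^-$ prequantized by $\Lcal_P=[\C_{\tilde{\mu}}]\boxtimes[\C_{-\mu}]$, identifies $\mathbf{H}(n)$ with $\mm(n\mu,n\tilde{\mu})$ via Borel--Weil, notes $\Phi_P^{-1}(0)/G\cong(\tilde{G}\tilde{\mu})_\mu$, and concludes by Lemma \ref{lem-de-base-0} (equivalently Proposition \ref{prop:QR-sjamaar}). This is exactly the shifting trick you describe, including the convention check on the opposite factor.
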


\medskip

When $(\tilde{G}\tilde{\mu})_\mu=\emptyset$, Theorem A tell us that for any dominant weight $(\lambda,\tilde{\lambda})$,   
$\mm(\lambda+n\mu,\tilde{\lambda} + n\tilde{\mu})=0$ when $n$ is large enough.

Let us concentrate to the case where $(\tilde{G}\tilde{\mu})_\mu\neq\emptyset$. We work here with the 
complex $G$-manifold $P=\tilde{G}\tilde{\mu}\times \overline{G\mu}$ where we take the opposite k\"{a}hler 
structure on $\overline{G\mu}$. 
Let $\xi_o\in \tilde{G}\tilde{\mu}$ 
such that $\pi(\xi_o)=\mu$: the stabilizer subgroup $H\subset G$ of $\xi_o$ is contained 
in $G_\mu$. On the coadjoint orbit $\tilde{G}\tilde{\mu}$ we work with the line bundle
$[\C_{\tilde{\mu}}]\simeq\tilde{G}\times_{\tilde{G}_{\tilde{\mu}}}\C_{\tilde{\mu}}$, and the vector bundle 
$\Ecal_{\tilde{\lambda}}:=\tilde{G}\times_{\tilde{G}_{\tilde{\mu}}}V^{\tilde{G}_{\tilde{\mu}}}_{\tilde{\lambda}}$ 
where $V^{\tilde{G}_{\tilde{\mu}}}_{\tilde{\lambda}}$ is the irreducible representation of $\tilde{G}_{\tilde{\mu}}$ 
with highest weight $\tilde{\lambda}$.

We consider the following $H$-modules associated to $p=(\xi_o,\mu)\in P$:
\begin{enumerate}
\item $\mathbb{D}:=[\C_{\tilde{\mu}}]\vert_{\xi_o}\otimes (\C_{\mu})^*\vert_H$,
\item $\mathbb{E}_{\lambda,\tilde{\lambda}}:=
\Ecal_{\tilde{\lambda}}\vert_{\xi_o} \otimes (V^{G_\mu}_{\lambda})^*\vert_H$, 
\item $\mathbb{W}:=\T_p P/\ggot_\C\cdot p$ that is isomorphic to 
$\T_{\xi_o}\tilde{G}\tilde{\mu}/\rho(\pgot_{\mu})\cdot \xi_o$. Here 
$\pgot_{\mu}$ is the parabolic sub-algebra of $\ggot_\C$ defined by $\pgot_{\mu}=\sum_{(\alpha,\mu)>0}
(\ggot_\C)_{\alpha}$.
\end{enumerate}

Note that $H^o$ acts trivially on the $H$-module $\mathbb{D}$ (it is a consequence of the Kostant relations). Hence the sequence $(\mathbb{D}^{\otimes n})_{n\geq 1}$ of $H$-modules is periodic.

\medskip

In this setting Proposition \ref{prop:equivalences} says that $(\tilde{G}\tilde{\mu})_\mu=\{pt\}$ if and only if 
the $H$-module $\Sym(\mathbb{W}^*)$ has finite $H$-multiplicities. Theorem A becomes

\medskip

\noindent{\bf Theorem B}\ {\em 
Let $(\mu,\tilde{\mu})$ be a dominant weight such that $(\tilde{G}\tilde{\mu})_\mu=\{pt\}$.

\begin{itemize}
\item We have $\mm(n\mu,n\tilde{\mu})=\dim [\mathbb{D}^{\otimes n}]^H, n\geq 1$,
and for any dominant weight $(\lambda,\tilde{\lambda})$ the equality 
$$
\mm(\lambda+ n\mu,\tilde{\lambda}+ n\tilde{\mu})= \dim [\Sym(\mathbb{W}^*)\otimes \mathbb{E}_{\lambda,\tilde{\lambda}}\otimes\mathbb{D}^{\otimes n}]^H
$$
holds for $n$ large enough. In particular the sequence $\mm(\lambda+ n\mu,\tilde{\lambda}+ n\tilde{\mu})$ 
is bounded. 

\item If $\mm(\mu,\tilde{\mu})\neq 0$, we have $\mm(n\mu,n\tilde{\mu})=1,\forall n\geq 1$. Moreover 
the sequence $\mm(\lambda+ n\mu,\tilde{\lambda}+ n\tilde{\mu})$ is increasing and constant for large 
enough $n$,  equal to $\dim [\Sym(\mathbb{W}^*)\otimes \mathbb{E}_{\lambda,\tilde{\lambda}}]^H$. 
\end{itemize}
}

\medskip

In section \ref{section:example} we give some examples where Theorem B applies.

%%%%%%%%%%%%%%%%%%%%%%%%%%%%%%%%%%%%%%%%%%%%%%%%%%%%%%%%%%%%%%
\subsection{Stability in a non-compact case}\label{sec:stability-non-compact}
%%%%%%%%%%%%%%%%%%%%%%%%%%%%%%%%%%%%%%%%%%%%%%%%%%%%%%%%%%%%%%

We consider here a closed subgroup $K$ of $G$ and a Hermitian $K$-module $V$. We denote $\Phi_V:V\to \kgot^*$ the (moment) map defined by $\langle\Phi_V(v),X\rangle=\frac{1}{i}(v,Xv)$. In this section we assume that the algebra $\Sym(V^*)$ of polynomial functions on $V$ has finite $K$-multiplicities.

Let $E$ be the representation of $G$ which is induced by the $K$-module $\Sym(V^*)$. We write
$E=\sum_{\mu}\mm(\mu) V^G_\mu$ where $V^G_\mu$ is the irreducible representation of $G$ parametrized by $\mu$, and  
$$
\mm(\mu)=\dim \left[ \Sym(V^*)\otimes (V^G_\mu)^*\vert_K\right]^K.
$$

The study of the asymptotic behaviour of the multiplicity function $\mu\mapsto\mm(\mu)$ uses that the representation space $E$ can be constructed as the ``geometric quantization'' of the Hamiltonian $G$-manifold 
\begin{equation}\label{eq:manifold}
M:=G\times_K(\kgot^\perp\oplus V)
\end{equation}
with moment map $\Phi$ defined by the relation
$$
\Phi([g;\xi\oplus v]):= g\left(\xi+\Phi_V(v)\right).
$$
Recall that the complex structure on $M$ comes from the natural isomorphism $M\simeq G_\C\times_{K_\C}V$.

We denote $M_\mu:=\Phi^{-1}(G\mu)/G$ the symplectic reduction of $M$ at $\mu$. Here the $[Q,R]=0$ Theorem gives the following 

\begin{prop}\label{prop:QR-sjamaar-nc}
We have the following equivalences:

$\bullet$ $\mm(n\mu)=0,\ \forall n\geq 1$ \ $\Longleftrightarrow$\  $M_\mu=\emptyset$,
 
$\bullet$ $\mm(n\mu)$ is non-zero and bounded \ $\Longleftrightarrow$ \ $M_\mu=\{pt\}$.

\end{prop}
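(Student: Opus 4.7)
The plan is to deduce Proposition \ref{prop:QR-sjamaar-nc} from its compact analogue Proposition \ref{prop:QR-sjamaar} by first identifying the reduced space $M_\mu$ with a compact $K$-quotient of $V$ and then invoking a $[Q,R]=0$ type theorem adapted to the non-compact ambient manifold $M$. By Frobenius reciprocity, $\mm(\mu)$ is the multiplicity of $V^G_\mu$ in the induced representation $\ind_K^G \Sym(V^*)$, which by the preceding discussion is the geometric quantization of $M = G\times_K(\kgot^\perp\oplus V)$; the finite-multiplicity hypothesis on $\Sym(V^*)$ ensures that this induced $G$-representation also has finite multiplicities, so the sequence $\mm(n\mu)$ is well-defined.

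The key geometric input is an explicit identification of $M_\mu = \Phi^{-1}(G\mu)/G$. Using the $G$-equivariant map $[g;\xi\oplus v]\mapsto gK$ from $M$ to $G/K$ to fibre $\Phi^{-1}(\mu)$ over $G/K$, and applying orbit-stabilizer to the $G_\mu$-action, one arrives at
\begin{equation*}
M_\mu \;\simeq\; \Phi_V^{-1}(\mu|_\kgot)/K_\mu,\qquad K_\mu := K \cap G_\mu,
\end{equation*}
so that $M_\mu$ is the $K$-symplectic reduction of $V$ at the restriction $\mu|_\kgot \in \kgot^*$. Since the components of $\Phi_V$ are quadratic in $v \in V$, the fibre $\Phi_V^{-1}(\mu|_\kgot)$ is a closed bounded subset of $V$, so $M_\mu$ is compact (and possibly singular) as a symplectic quotient.

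The final step is to invoke a $[Q,R]=0$ theorem for the non-compact $M$: under the finite-multiplicity assumption the appropriate non-compact version of quantization-commutes-with-reduction applies and expresses the asymptotic behaviour of $\mm(n\mu)$ in terms of the quantization of the compact reduction $M_{n\mu}$. The geometric criterion already encoded in Proposition \ref{prop:QR-sjamaar} — that the relevant space of invariant holomorphic sections vanishes or stays bounded exactly when the level-zero reduction is empty or a single point — then transfers verbatim to yield both claimed equivalences. The main obstacle I anticipate is precisely this non-compact $[Q,R]=0$ step: while the identification of $M_\mu$ as a compact symplectic quotient and the final appeal to Proposition \ref{prop:QR-sjamaar} are essentially formal, justifying the asymptotic equality between $\mm(n\mu)$ and the quantization of $M_{n\mu}$ rigorously requires invoking a non-compact version of the theorem, for which the finite-multiplicity hypothesis on $\Sym(V^*)$ is the essential input.
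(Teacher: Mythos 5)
There is a genuine gap, and it comes in two places. First, your identification $M_\mu \simeq \Phi_V^{-1}(\mu\vert_\kgot)/K_\mu$ is not correct. The set $\Phi^{-1}(\mu)$ consists of classes $[g;\xi\oplus v]$ with $\xi+\Phi_V(v)=g^{-1}\mu$, and its image in $G/K$ under $[g;\xi\oplus v]\mapsto gK$ is $\{gK : (g^{-1}\mu)\vert_{\kgot}\in \image(\Phi_V)\}$, on which $G_\mu$ does \emph{not} act transitively in general; so the orbit--stabilizer step fails. The correct statement is that $M_\mu$ is the zero-level $K$-reduction of $(G\mu)^-\times V$, i.e.\ $\{(\eta,v)\in G\mu\times V : \Phi_V(v)=\eta\vert_{\kgot}\}/K$, which is genuinely larger than your fibre. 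A concrete counterexample: take $V=0$, $G=SU(2)$, $K=T$ a maximal torus and $\mu\neq 0$ dominant. Then $\Phi^{-1}(G\mu)=G\mu\cap\kgot^\perp$ is a circle on which $T$ acts transitively, so $M_\mu=\{pt\}$ and indeed $\mm(n\mu)=\dim(V^G_{n\mu})^T$ is bounded; but your formula gives $\Phi_V^{-1}(\mu\vert_\kgot)=\emptyset$. (A smaller point: $\Phi_V^{-1}$ of a point is bounded not because the components of $\Phi_V$ are quadratic — quadrics are rarely bounded — but because the finite-multiplicity hypothesis forces the coercivity estimate $\|\Phi_V(v)\|\geq c\|v\|^2$.)

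Second, the step you yourself flag as the main obstacle — the non-compact $[Q,R]=0$ input — is precisely where the content lies, and it cannot be reduced to Proposition \ref{prop:QR-sjamaar}, which is stated for compact $M$. The paper's route is to observe that $M=G\times_K(\qgot^*\oplus V)$ carries an integrable complex structure via $M\simeq G_\C\times_{K_\C}V$, $[g;X\oplus v]\mapsto[ge^{iX};v]$, compatible with $\Omega_M$ and prequantized by the trivial line bundle, with $\Phi$ proper. One then identifies $\mm(\mu)$ with the multiplicity $\mm^{hol}(\mu)$ of $V^G_\mu$ in the space of holomorphic functions $\Ccal^{hol}(M)$, using that $\bigoplus_\lambda V^G_\lambda\otimes[(V^G_\lambda)^*\vert_K\otimes\Sym(V^*)]^K$ is dense in $\Ccal^{hol}(G_\C\times_{K_\C}V)$, and applies Lemma \ref{lem-de-base} — Sjamaar's non-compact Kähler $[Q,R]=0$ combined with the Hilbert-polynomial argument of Proposition \ref{prop:HS} and the shifting trick. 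Your proposal supplies neither the complex structure on $M$, nor the link between $\mm(n\mu)$ and invariant holomorphic sections, nor a precise theorem covering the non-compact case, so as written the equivalences do not follow.
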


We fix a dominant weight $\mu$. Let $m_o\in M$ such that $\Phi(m_o)=\mu$. 
Its stabilizer subgroup $H\subset G$ is contained in $G_\mu$. Hence 
the $1$-dimensional representation $\C_\mu$ of the group $G_\mu$ can be restricted to $H$. It is not difficult to see that the 
connected component $H^o$ acts trivially on $\C_\mu$. Hence the sequence $\C_{n\mu}\vert_H$ of $H$-modules is periodic. 

The complex $H$-module $\mathbb{W}:=\T_{m_o}M/\ggot_\C\cdot m_o$ associated to the point 
$m_o\simeq [g_o,v_o]\in G_\C\times_{K_\C}V$  is naturally equal to $V/\kgot_\C\cdot v_o$. 
Recall that the $H$-multiplicities in $\Sym(\mathbb{W}^*)$ are finite if and only if $\Phi^{-1}(G\mu)=Gm_o$.

\medskip

In this non-compact setting, we obtain the following stability result 

\medskip

\noindent {\bf Theorem C}{\em 
\begin{itemize}
\item If  $\mm(n\mu)=0,\ \forall n\geq 1$, then for any dominant weight $\lambda$  we have $\mm(\lambda+n\mu)=0$ if $n$ is large enough.

\item  If $\mm(n\mu)$ is bounded and non-zero, then 
$\mm(n\mu)=\dim [\C_{n\mu}\vert_H]^H, n\geq 0$,
and for any dominant weight $\lambda$
$$
\mm(\lambda+ n\mu)= \dim \left[\Sym(\mathbb{W}^*)\otimes (V_\lambda^{G_\mu})^*\vert_H\otimes\C_{- n\mu}\vert_H\right]^H
$$
for $n$ large enough. In particular the sequence $\mm(\lambda+ n\mu)$ is bounded. 
\item If $\mm(n\mu)$ is bounded and $\mm(\mu)=1$,  the sequence 
$\mm(\lambda+n\mu)$ is increasing and constant for large enough $n$,  equal to 
$\dim [S(\mathbb{W}^*)\otimes (V_\lambda^{G_\mu})^*\vert_H]^H$. 
\end{itemize}
}

\medskip

The following example recovers the situation studied in Section \ref{sec:stability-branching}

\begin{exam}
Consider the case of a morphism $\rho: K\to\tilde{K}$ between two connected compact Lie groups. 
If we work with the groups $G:=\tilde{K}\times K$, $K\croc G$ embedded diagonally, and the trivial 
module $V=0$, the $G$-manifold (\ref{eq:manifold}) corresponds to 
the cotangent bundle $\T^*\tilde{K}$ with the action of $\tilde{K}\times K$ induced by the following action of 
$\tilde{K}\times K$ on  $\tilde{K}$:  $(\tilde{k},k)\cdot g=\tilde{k}gi(k)^{-1}$. In this setting the multiplicity function 
is defined by the relation 
$$
\mm(\tilde{\lambda},\lambda) =\dim \left[V^{\tilde{K}}_{\tilde{\lambda}}\vert_{K}\otimes V^K_{\lambda}\right]^K, 
$$
for $(\tilde{\lambda},\lambda)\in \hat{\tilde{K}}\times\hat{K}$. 
\end{exam}

\section{Reduction of K\"ahler manifolds}\label{sec:Kahler}
%%%%%%%%%%%%%%%%%%%%%%%%%%%%%%%%%%%%%%%%%%%%%%%%%%%%%%%%%%%%%%
%%%%%%%%%%%%%%%%%%%%%%%%%%%%%%%%%%%%%%%%%%%%%%%%%%%%%%%%%%%%%%

We consider a complex manifold $M$, not necessarily compact, and  a holomorphic Hermitian line bundle $(\Lcal,\mathrm{h})$ on it. We assume that the curvature $\Omega=i(\nabla^h)^2$ of its Chern connexion $\nabla^h$ is a K\"{a}hler class (we says that the line bundle 
$\Lcal$ prequantizes the symplectic form $\Omega$).

We suppose furthermore that  a compact connected Lie group $G$ acts on $\Lcal\to M$ leaving the metric and connection invariant. 
Hence we have a moment map $\Phi:M\to\ggot^*$ defined by Kostant's relations \ref{eq:kostant-rel}. Let us assume that the $G$-action on 
$M$ extends to a $G_\C$-action and that the momentum map $\Phi$ is {\bf proper}. Then the $G$-actions on $\Lcal$ and on 
its smooth sections can both be uniquely extended to actions of $G_\C$, and the projection $\Lcal\to M$ is equivariant. 

An important object in this context is the reduced space 
$$
M_0:=\Phi^{-1}(0)/G
$$
which is compact. When $0$ is a regular value of $\Phi$, the set $M_0$ is an orbifold equipped with an 
induced K\"ahler structure  form $(\Omega_0,J_0)$, and the line orbibundle 
$\Lcal_0:=\Lcal\vert_{\Phi^{-1}(0)}/G$ prequantizes $(M_0,\Omega_0)$.

In general the set $M_0$ has a natural structure of a singular K\"{a}hler manifold that is defined as follows. 
A point $m\in M$ is (analytically) semi-stable if the closure of the $G_\C$-orbit through $m$ intersects the 
zero level set $\Phi^{-1}(0)$, and we denote the set of semi-stable points by $M^{\semi}$.

On $M^\semi$, we have a natural equivalence relation : $x\sim y\ \Longleftrightarrow \overline{G_\C x}\cap
\overline{G_\C y}\neq \emptyset$. The Mumford GIT quotient $M/\!\!/ G_\C$ is the quotient of $M^\semi$ 
by this equivalence relation  \cite{MFK,Kirwan84,Sjamaar95}.

We have the following crucial fact
\begin{theo} The set $M/\!\!/ G_\C$ has a canonical structure of a complex analytic space, and the inclusion 
$\Phi^{-1}(0)\croc M^\semi$ induces an homeomorphism $M_0\simeq M/\!\!/ G_\C$.
\end{theo}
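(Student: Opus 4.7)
The plan is to construct the homeomorphism $M_0\simeq M/\!\!/ G_\C$ via a Kempf--Ness type argument built on the norm-square $\|\Phi\|^2$, and then install a complex analytic structure on $M/\!\!/G_\C$ by means of local holomorphic slices.

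First I would analyze the restriction of $\|\Phi\|^2$ to a single $G_\C$-orbit $G_\C\cdot m$ with $m\in M^{\semi}$. Using the Kostant relations (\ref{eq:kostant-rel}) and the compatibility of $\Omega$ with the complex structure $J$, one computes that the Riemannian gradient of $\|\Phi\|^2$ is tangent to the $iG$-direction inside $G_\C$ and that $\|\Phi\|^2$ is (strictly) convex along every orbit of $iG\subset G_\C$, with critical points precisely at the zeros of $\Phi$. Combined with properness of $\Phi$, this implies that every semistable $G_\C$-orbit contains a unique closed $G_\C$-orbit in its closure, and that this closed orbit meets $\Phi^{-1}(0)$ in exactly one $G$-orbit.

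Second, this yields a bijection $\Phi^{-1}(0)/G \to M^{\semi}/{\sim}$: injectivity comes from strict convexity along $iG$-orbits (two zeros of $\Phi$ in the same $G_\C$-orbit must coincide modulo $G$), and surjectivity is the existence of a minimum just established. To upgrade the bijection to a homeomorphism I would use the negative gradient flow of $\|\Phi\|^2$, which is $G$-equivariant, globally defined on $M^{\semi}$, remains in compact sets by properness of $\Phi$, and retracts $M^{\semi}$ onto $\Phi^{-1}(0)$; passing to the quotient, it gives a continuous inverse to the natural map $M_0\to M/\!\!/G_\C$.

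Third, to equip $M/\!\!/G_\C$ with a complex analytic structure I would build local charts around each $m\in \Phi^{-1}(0)$. Denote by $H\subset G$ the stabilizer of $m$ and set $\mathbb{W}:=\T_m M/\ggot_\C\cdot m$, a complex $H$-module. An equivariant holomorphic slice theorem, available in this K\"{a}hler setting because $\Phi$ is proper, supplies a $G_\C$-invariant open neighborhood of $Gm$ in $M$ that is $G_\C$-equivariantly biholomorphic to a saturated neighborhood of the zero section of $G_\C\times_{H_\C}\mathbb{W}$. The local GIT quotient $\mathbb{W}/\!\!/H_\C$ carries a canonical complex analytic structure coming from invariant theory (Schwarz, Heinzner), and one checks that these local analytic models glue coherently under the homeomorphism established in the previous step. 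The main obstacle is precisely this slice theorem in the non-algebraic holomorphic category: one must produce a holomorphic $H_\C$-slice whose invariant-theoretic quotient genuinely models a neighborhood of the image of $Gm$ in $M/\!\!/G_\C$, and verify that the resulting charts assemble into a Hausdorff complex analytic space. This is the substance of the work of Heinzner--Loose and Sjamaar, and it relies crucially on the convexity analysis of $\|\Phi\|^2$ together with the properness of $\Phi$.
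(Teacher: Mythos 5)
The paper does not prove this theorem: it is stated as a ``crucial fact'' imported from the literature (the surrounding text points to \cite{MFK}, \cite{Kirwan84} and \cite{Sjamaar95}, and the analytic version is due to Kempf--Ness, Kirwan, Heinzner--Loose and Sjamaar). So there is no internal argument to compare yours against; I can only assess your sketch as a proof of the cited result.

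Your outline follows the standard route (Kempf--Ness analysis of closed orbits, gradient-flow retraction onto $\Phi^{-1}(0)$, holomorphic slices for the analytic structure), which is the right strategy. But one step is misstated: $\|\Phi\|^2$ is \emph{not} convex along orbits of $iG$, let alone strictly so. The convex object is the Kempf--Ness functional $\psi_m$ on $G_\C/G$, characterized by $\frac{d}{dt}\psi_m(\exp(itX))=\langle\Phi(\exp(itX)\cdot m),X\rangle$, whose second derivative equals $\|X_M\|^2\geq 0$; strict convexity fails precisely in stabilizer directions, which is why one obtains uniqueness of the minimizing $G$-orbit rather than of a minimizing point. Your injectivity argument must be run through $\psi_m$, not through $\|\Phi\|^2$. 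Beyond that, the two genuinely hard analytic points --- convergence of the negative gradient flow of $\|\Phi\|^2$ on $M^\semi$ together with continuity of the limit map, and the existence of holomorphic slices whose local models $\mathbb{W}/\!\!/H_\C$ glue into a Hausdorff complex analytic space --- are named but not established; as you acknowledge, they constitute the substance of \cite{Sjamaar95}. Your proposal is therefore a mostly accurate reading guide to the references rather than a self-contained proof, which is in fact consistent with how the paper itself treats the statement.
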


To get a genuine line bundle on $M_0$, we have to replace $\Lcal$ by a suitable power $\Lbb:=\Lcal^{\otimes q}$ 
such that for any $m\in \Phi^{-1}(0)$ the stabilizer subgroup $G_m$ acts trivially on $\Lbb\vert_m$. Then $\Lbb_0:=
\Lcal^{\otimes q}\vert_{\Phi^{-1}(0)}/G$ is an holomorphic line bundle on $M_0$.

We need the following result (see Theorem 2.14 in \cite{Sjamaar95}).

\begin{theo}\label{theo:L_0-positive}
The line bundle $\Lbb_0$ is positive in the sense of Grauert. The reduced space $M_0$ is a complex projective variety, a projective embedding being given by the Kodaira map $M_0 \to \mathbb{P}(\Gamma(M_0, \Lbb_0^{\otimes k}))$ for all sufficiently large 
$k$.
\end{theo}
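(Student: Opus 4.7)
The plan is to establish, via the homeomorphism $M_0 \simeq M/\!\!/ G_\C$, that for $k$ sufficiently large the Kodaira map
$$\varphi_k : M_0 \longrightarrow \mathbb{P}\bigl(\Gamma(M_0, \Lbb_0^{\otimes k})^*\bigr)$$
is a closed holomorphic embedding; positivity of $\Lbb_0$ in the sense of Grauert then follows automatically. Since $\Phi$ is proper, $\Phi^{-1}(0)$ is compact, so $M_0$ is a compact analytic space and each $\Gamma(M_0, \Lbb_0^{\otimes k})$ is finite-dimensional.

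First I would identify holomorphic sections of $\Lbb_0^{\otimes k}$ with $G_\C$-invariant holomorphic sections of $\Lbb^{\otimes k}$ over $M^{\semi}$. This identification is well-defined precisely because the power $q$ in $\Lbb = \Lcal^{\otimes q}$ was chosen so that every stabilizer along $\Phi^{-1}(0)$ acts trivially on the corresponding fibre of $\Lbb$. The problem then becomes: produce enough $G$-invariant holomorphic sections of $\Lbb^{\otimes k}$ to separate closed $G_\C$-orbits in $M^{\semi}$ and to provide local coordinates on $M_0$.

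Next I would produce these sections. Because $\Lcal$, hence $\Lbb$, is ample on $M$, for distinct closed orbits $G_\C x \neq G_\C y$ in $M^{\semi}$ some power $\Lbb^{\otimes k}$ admits a section separating $x$ from $y$. Averaging over the compact group $G$, and using complete reducibility of $G_\C$ on the ambient finite-dimensional space of sections, yields a $G$-invariant section with the same separation property. For local coordinates near a point $[m_o]\in M_0$, I would invoke the Marle-Guillemin-Sternberg slice theorem to model a neighbourhood of $m_o$ as a linear $H$-representation; on this linear model, classical invariant theory produces polynomial invariants that give local parameters on the GIT quotient, and these lift to $G$-invariant sections of $\Lbb^{\otimes k}$ for $k$ large.

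The main obstacle is to upgrade these pointwise constructions into a uniform statement: one needs a single integer $k$ such that $\varphi_k$ is simultaneously injective and immersive on all of $M_0$. This is handled by a standard compactness argument — for each $k$ the locus in $M_0\times M_0$ where a given finite-dimensional space of invariant sections fails to separate a pair of points is closed, these loci form a decreasing family whose intersection is the diagonal, and an analogous argument applies to the immersion condition on $M_0$. Since $M_0$ and $M_0\times M_0$ are compact, some finite $k$ does the job, yielding the projective embedding and, \emph{a fortiori}, the Grauert positivity of $\Lbb_0$.
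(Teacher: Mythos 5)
You should first be aware that the paper does not prove this statement at all: it is quoted verbatim from Sjamaar (Theorem~2.14 of \cite{Sjamaar95}), so there is no ``paper proof'' to match. Judged on its own terms, your argument has a genuine gap, and in fact two. First, you lean on the ampleness of $\Lcal$ on $M$ to produce sections of $\Lbb^{\otimes k}$ separating points of $M^{\semi}$. But Theorem~\ref{theo:L_0-positive} lives in the setting of Section~\ref{sec:Kahler}, where $M$ is \emph{not} assumed compact and $\Lcal$ is only assumed to prequantize a K\"ahler form; ``ample'' in the sense that high powers are globally generated and separate points is simply not available as a hypothesis, and a positively curved line bundle on a non-compact manifold need not have any useful supply of global holomorphic sections. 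Even granting compactness, your averaging step is incorrect as stated: projecting a section $s$ with $s(x)\neq s(y)$ onto the $G$-invariant part can destroy the separation. The correct GIT-style argument requires a section vanishing identically on one closed orbit (closure) and not on the other, and then uses that the Reynolds operator preserves the ideals of closed invariant sets; for two arbitrary points in distinct closed orbits of $M^{\semi}$ the existence of such an invariant section is precisely the nontrivial content being proved, not an input.

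Second, and more structurally, your plan inverts the logic of the actual proof and in doing so buries the hard step. The step ``local invariant polynomials on the slice lift to global $G$-invariant sections of $\Lbb^{\otimes k}$'' is not a routine extension: producing global sections of a line bundle on a compact complex \emph{space} with prescribed local behaviour is exactly what Grauert's generalization of the Kodaira embedding theorem delivers, and its hypothesis is the Grauert positivity you propose to obtain ``automatically'' at the end. The correct order of deduction is: (i) the Hermitian metric $h$ on $\Lcal$ descends to a metric on $\Lbb_0$ over $M_0$ whose local weights are strictly plurisubharmonic (equivalently, the unit disc bundle in $\Lbb_0^{*}$ is strictly pseudoconvex) --- this is a local computation using the holomorphic slice theorem and the K\"ahler reduction, and it is where the real work of \cite{Sjamaar95} lies; (ii) $M_0$ is compact because $\Phi$ is proper; (iii) Grauert's theorem for positive line bundles on compact complex spaces then yields the projective embedding by the Kodaira maps for large $k$. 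Your uniform-$k$ compactness argument at the end is fine in spirit but cannot rescue the construction of the sections it is meant to organize.
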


The following theorem is the first instance of the $[Q,R]=0$ phenomenon. It was proved by Guillemin-Sternberg 
\cite{Guillemin-Sternberg82} in the case where $0$ is a regular value of $\Phi$ and $M$ is compact.  In 
\cite{Sjamaar95} Sjamaar extends  their result  by dealing the non-smoothness of $M_0$ and the non-compactness of $M$. 

\begin{theo}\label{QR=0:GSS}
The quotient map $M^\semi\to M_0$ and the inclusion $M^\semi \subset M$ induce the isomorphisms 
$\Gamma(M,\Lcal)^G\simeq\Gamma(M^\semi,\Lcal)^G\simeq\Gamma(M_0,q^G_*\Lcal)$, where $q^G_*\Lcal$ is the sheaf of invariant section induces by the line bundle $\Lcal$.
\end{theo}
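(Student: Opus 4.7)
The second identification is built into the definition of the pushforward sheaf: $q^G_*\Lcal$ is the sheaf on $M_0$ whose sections over an open $U$ are the $G$-invariant holomorphic sections of $\Lcal$ on $q^{-1}(U) \subset M^\semi$, so $\Gamma(M_0, q^G_*\Lcal) = \Gamma(M^\semi,\Lcal)^G$ tautologically. The content of the theorem therefore lies in the first isomorphism.

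For $\Gamma(M,\Lcal)^G \simeq \Gamma(M^\semi,\Lcal)^G$, the restriction map is well-defined since $M^\semi$ is $G$-invariant and open. The plan is to prove bijectivity by constructing a canonical holomorphic extension to $M$ of every $G$-invariant section on $M^\semi$. The central tool is the Kirwan-Ness stratification associated to $\|\Phi\|^2$: the unstable locus $M \setminus M^\semi$ decomposes into a locally finite family of $G$-invariant, locally closed complex submanifolds $\{S_\beta\}$, each realised as the unstable manifold (under the gradient flow of $-\|\Phi\|^2$) of a critical component of $\|\Phi\|^2$. Crucially, along each $S_\beta$ there is a distinguished one-parameter subgroup contained in $G$ whose complexification acts on the normal bundle with strictly positive weights.

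The extension would be produced stratum by stratum, ordered by closure inclusion. Given $s \in \Gamma(M^\semi,\Lcal)^G$, fix a stratum $S_\beta$ of currently minimal depth and work in a $G$-equivariant holomorphic tubular neighborhood. Since $s$ is $G$-invariant and holomorphic it is automatically $G_\C$-invariant; combined with the positive-weight structure of the normal directions this forces the expansion of $s$ in normal coordinates to contain no polar contributions, so $s$ extends holomorphically across $S_\beta$. Iterating yields the desired global invariant section $\tilde s \in \Gamma(M,\Lcal)^G$. Injectivity of the restriction is either an analytic continuation statement (when $M^\semi$ is dense) or trivial (when $M^\semi$ is empty, since the same weight analysis then forces $\Gamma(M,\Lcal)^G=0$).

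The main obstacle is controlling the extension uniformly across the whole collection of strata and at their mutual boundaries, where the complex-analytic behaviour near critical sets of $\|\Phi\|^2$ must be handled carefully. Properness of $\Phi$ ensures local finiteness of the Kirwan-Ness stratification, and compatibility of the stratification with the complex structure makes the extension at each stratum holomorphic rather than merely continuous. This is precisely the analysis carried out by Sjamaar in \cite{Sjamaar95}, generalising the compact smooth case of \cite{Guillemin-Sternberg82}.
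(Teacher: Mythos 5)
The paper does not actually prove this statement: it is imported verbatim from the literature (Guillemin--Sternberg for $M$ compact and $0$ regular, Sjamaar \cite{Sjamaar95} in general), so there is no internal proof to compare against. Your reduction of the second isomorphism to the definition of $q^G_*\Lcal$ is correct and tautological, and you correctly locate the content in the first isomorphism. But judged as a proof, your sketch of that first isomorphism has a genuine gap at its central step. The assertion that ``the positive-weight structure of the normal directions forces the expansion of $s$ in normal coordinates to contain no polar contributions'' is not an argument, and as stated it cannot be one, because nowhere do you use the only hypothesis that makes the theorem true: that $\Lcal$ is the \emph{prequantum} bundle, i.e.\ that the $G$-action on $\Lcal$ is tied to $\Phi$ by the Kostant relations (\ref{eq:kostant-rel}). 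The extension property $\Gamma(M^{\semi},\Fcal)^G\simeq\Gamma(M,\Fcal)^G$ is false for a general equivariant line bundle $\Fcal$ with the same underlying $G_\C$-manifold and the same stratification; indeed the paper's own Theorem A shows that for an auxiliary bundle $\Ecal$ the identification with data on $M_0$ only holds for $\Ecal\otimes\Lcal^{\otimes n}$ with $n$ large. Any weight analysis on a stratum $S_\beta$ must therefore compute the weight of the distinguished one-parameter subgroup $\exp(t\beta)$ on the fibre of $\Lcal$ over the critical set, which by Kostant's relation is $\langle\Phi,\beta\rangle=\|\beta\|^2>0$; it is this sign, not the normal weights alone, that kills the would-be singular terms. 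Moreover, for strata of complex codimension one a ``no polar parts'' conclusion requires an actual growth bound on $s$ near the stratum, which you never establish.

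For the record, the standard route (Guillemin--Sternberg, reused by Sjamaar) is different and shorter: from the Kostant relation one derives, for a $G$-invariant holomorphic section $s$ and $X\in\ggot$, the identity $\tfrac{d}{dt}\log\|s(\exp(itX)\cdot m)\|^2=-2\langle\Phi(\exp(itX)\cdot m),X\rangle$, whence $\|s\|$ is monotone along the gradient flow of $\|\Phi\|^2$ and any invariant section over $M^{\semi}$ is bounded by its supremum over the compact set $\Phi^{-1}(0)$. Since $M\setminus M^{\semi}$ is a complex-analytic subset of positive codimension, the Riemann removable-singularity theorem then gives the holomorphic extension, and injectivity follows from density of $M^{\semi}$ (componentwise). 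Your closing sentence in effect concedes the point by deferring the whole analysis to \cite{Sjamaar95}, which is exactly what the paper itself does; but then the proposal is a citation, not a proof.
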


%\begin{rem}
%It follows that a point in $M$ is analytically semistable if and only if it is algebraically semistable in the sense of M\"unford.
%\end{rem}
%

In this paper we will use Theorems \ref{theo:L_0-positive} and \ref{QR=0:GSS} to get basic results concerning the sequence $\mathbf{H}(n):=\dim \Gamma(M,\Lcal^{\otimes n})^G, \ n\geq 1$.

\begin{prop}\label{prop:HS}
For $n$ large enough, the sequence $\mathbf{H}(nq)$ is polynomial with a dominant term of the form $c n^{\alpha}$ where $\alpha$ is the complex dimension of the (smooth part of the) projective variety $M_0$.
\end{prop}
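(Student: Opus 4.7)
The plan is to transfer the computation from $M$ down to the compact projective variety $M_0$ via Theorem~\ref{QR=0:GSS}, and then appeal to classical Hilbert polynomial theory for an ample line bundle on a projective variety.

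First I would observe that for every $n \geq 1$, the $G$-equivariant line bundle $\Lcal^{\otimes nq} = \Lbb^{\otimes n}$ descends to a genuine holomorphic line bundle on $M_0$, namely $\Lbb_0^{\otimes n}$, because by construction the stabilizer of any $m \in \Phi^{-1}(0)$ acts trivially on $\Lbb\vert_m = \Lcal^{\otimes q}\vert_m$, hence on every tensor power. Consequently $q^G_* \Lcal^{\otimes nq}$ is the sheaf of sections of $\Lbb_0^{\otimes n}$, and Theorem~\ref{QR=0:GSS} yields
$$
\mathbf{H}(nq) \;=\; \dim \Gamma(M,\Lcal^{\otimes nq})^G \;=\; \dim \Gamma(M_0, \Lbb_0^{\otimes n}).
$$

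Next I would exploit Theorem~\ref{theo:L_0-positive}, which says that $\Lbb_0$ is a positive (ample) holomorphic line bundle on the compact complex projective variety $M_0$. By Serre's vanishing theorem there exists $n_1$ such that $H^i(M_0, \Lbb_0^{\otimes n}) = 0$ for all $i \geq 1$ and all $n \geq n_1$. For such $n$, $\mathbf{H}(nq)$ coincides with the holomorphic Euler characteristic $\chi(M_0, \Lbb_0^{\otimes n})$. Snapper's theorem (the Hilbert--Samuel polynomial theory for coherent sheaves on a projective scheme) guarantees that $n \mapsto \chi(M_0, \Lbb_0^{\otimes n})$ agrees with a polynomial in $n$ of degree exactly $\alpha = \dim_{\C} M_0$, with leading coefficient $\frac{1}{\alpha!}\int_{M_0^{\mathrm{reg}}} c_1(\Lbb_0)^\alpha$, which is strictly positive by the ampleness of $\Lbb_0$.

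The one subtlety to keep in mind is that $M_0$ may be singular, so Riemann--Roch on a smooth manifold is not directly available; but Snapper--Hilbert--Samuel is valid for coherent sheaves on an arbitrary projective variety, and the top degree $\alpha$ is determined by the self-intersection $c_1(\Lbb_0)^\alpha$ computed on the regular part, which is precisely the complex dimension of the smooth locus. This is the only step that requires care; the remainder of the argument is simply the application of Theorems~\ref{theo:L_0-positive} and~\ref{QR=0:GSS} in sequence.
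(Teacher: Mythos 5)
Your argument is correct and follows essentially the same route as the paper: both proofs first identify $\mathbf{H}(nq)$ with $\dim\Gamma(M_0,\Lbb_0^{\otimes n})$ via Theorem~\ref{QR=0:GSS}, and then invoke the standard asymptotics of the Hilbert function of the ample line bundle $\Lbb_0$ on the projective variety $M_0$ guaranteed by Theorem~\ref{theo:L_0-positive}. The only difference is cosmetic: the paper cites the Kodaira embedding and the eventual polynomiality of the Hilbert function of the embedded variety, while you route through Serre vanishing and Snapper's theorem, which is the same standard fact dressed differently.
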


\begin{proof}
It is direct consequence of two facts: $\mathbf{H}(nq):=\dim \Gamma(M_0,\Lbb_0^{\otimes n})$ thanks 
to Theorem \ref{QR=0:GSS} and the Kodaira map $M_0 \to \mathbb{P}(\Gamma(M_0, \Lbb_0^{\otimes n}))$ is  a 
projective embedding for $n$ large enough.
\end{proof}

We get then the following useful result.

\begin{lem}\label{lem-de-base-0}
$\bullet$  $\mathbf{H}(n)=0,\ n\geq 1 \ \Longleftrightarrow \ M_0=\emptyset$.

$\bullet$ $\mathbf{H}(n)$ is non-zero and bounded $\Longleftrightarrow \ M_0=\{pt\}$. 

$\bullet$ If $\mathbf{H}(n)$ is bounded and $\mathbf{H}(1)\neq 0$, then $\mathbf{H}(n)=1$ for all $n\geq 1$.
\end{lem}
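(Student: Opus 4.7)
The plan is to read all three items off Theorems \ref{theo:L_0-positive} and \ref{QR=0:GSS} together with the Hilbert-polynomial statement of Proposition \ref{prop:HS}. The starting point is the identity
\[
\mathbf{H}(n) \;=\; \dim\Gamma\bigl(M_0,\, q^G_*\Lcal^{\otimes n}\bigr),
\]
coming from Theorem \ref{QR=0:GSS} applied to $\Lcal^{\otimes n}$. When $n=kq$ this simplifies to $\mathbf{H}(kq)=\dim\Gamma(M_0,\Lbb_0^{\otimes k})$, involving the honest line bundle $\Lbb_0$.

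For the first bullet, if $M_0=\emptyset$ then the right-hand side is automatically zero for every $n$; conversely, if $M_0\neq\emptyset$ then Theorem \ref{theo:L_0-positive} guarantees that $\Lbb_0$ is Grauert-positive and that $\Gamma(M_0,\Lbb_0^{\otimes k})\neq 0$ for all sufficiently large $k$, so $\mathbf{H}(kq)\neq 0$. For the second bullet, assume $\mathbf{H}(n)$ is bounded and non-zero: Proposition \ref{prop:HS} makes $\mathbf{H}(kq)$ eventually polynomial of degree $\dim_\C M_0$, and boundedness forces this degree to be $0$, so the projective variety $M_0$ is a finite set; invoking the connectedness of $\Phi^{-1}(0)$ (Kirwan's connectedness theorem, in the extension to proper moment maps) we get $M_0=\{pt\}$. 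Conversely, if $M_0=\{Gm_o\}$ with stabilizer $H$, the fibrewise formula $\mathbf{H}(n)=\dim[\Lcal^{\otimes n}\vert_{m_o}]^H$ (stated already in the paper, coming from the fact that $q^G_*\Lcal^{\otimes n}$ at a single-point quotient is just the $H$-invariants of the fiber) gives $\mathbf{H}(n)\in\{0,1\}$, hence boundedness; and the defining property of $q$ guarantees $\mathbf{H}(kq)=1$ for every $k\geq 1$, hence non-vanishing.

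For the third bullet, the previous item yields $M_0=\{Gm_o\}$ and $\mathbf{H}(n)=\dim[\Lcal^{\otimes n}\vert_{m_o}]^H$, with $H$ acting on the one-dimensional fiber $\Lcal\vert_{m_o}$ through a character $\chi:H\to \U(1)$. The hypothesis $\mathbf{H}(1)\neq 0$ says precisely that $\chi$ is trivial, so every $\chi^n$ is trivial and $\mathbf{H}(n)=1$ for all $n\geq 1$. The main obstacle I anticipate is bookkeeping between the full sequence $\mathbf{H}(n)$ and its sub-sequence $\mathbf{H}(nq)$: Proposition \ref{prop:HS} only controls the latter, so the passage to every integer $n$ has to be routed through the fibrewise description valid once $M_0$ is a point. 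The only other delicate citation is the connectedness of the zero-level set in the non-compact, proper-moment-map setting, which is standard but worth naming explicitly.
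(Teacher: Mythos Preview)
Your proof is correct and follows essentially the same route as the paper's: the forward implications come from Proposition~\ref{prop:HS} and the backward ones from Theorem~\ref{QR=0:GSS}, with the third bullet handled via the fiberwise character description $\mathbf{H}(n)=\dim[\Lcal^{\otimes n}\vert_{m_o}]^H$. You are in fact more careful than the paper on one point: you explicitly invoke connectedness of $\Phi^{-1}(0)$ to pass from $\dim_\C M_0=0$ to $M_0=\{pt\}$, a step the paper's proof leaves implicit.
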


\begin{proof} The implications $\Longrightarrow$ are a consequence of Proposition \ref{prop:HS}, and the implications 
$\Longleftarrow$ are a consequence of Theorem \ref{QR=0:GSS}. For the last point we use first the $[Q,R]=0$ theorem when $M_0=\{pt\}$ : we have 
$$
\mathbf{H}(n):=\dim \left[\Lcal^{\otimes n}\vert_{m_o}\right]^H
$$
where $m\in \Phi^{-1}(0)$ and $H$ is the stabilizer subgroup of $m_o$. The $H$-module $\Lcal\vert_{m_o}$ is trivial if and only if $\mathbf{H}(1)=1$. The third point follows then.
\end{proof}

\medskip

We can now state the corresponding result that relates the multiplicities 
$$
\mm^{\Lcal}(\mu,n):= \dim \left[\Gamma(M,\Lcal^{\otimes n})\otimes (V^G_{\mu})^*\right]^G.
$$
with the reduced spaces $M_\mu:=\Phi^{-1}(G\mu)/G$.

\begin{lem}\label{lem-de-base}
$\bullet$  $\mm^{\Lcal}(n\mu,n)=0,\ n\geq 1 \ \Longleftrightarrow \ M_\mu=\emptyset$.

$\bullet$ $\mm^{\Lcal}(n\mu,n)$ is non-zero and bounded $\Longleftrightarrow \ M_\mu=\{pt\}$. 
\end{lem}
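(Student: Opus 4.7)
The plan is to deduce Lemma~\ref{lem-de-base} from the already established Lemma~\ref{lem-de-base-0} by means of the classical \emph{shifting trick}. One introduces the auxiliary K\"ahler $G$-manifold $\tilde{M}:=M\times \overline{G\mu}$, where $\overline{G\mu}$ denotes the coadjoint orbit $G\mu\subset \ggot^*$ equipped with the opposite K\"ahler structure. On $\overline{G\mu}$ there is a canonical holomorphic $G$-equivariant prequantum line bundle $L^-_\mu$ (the conjugate of the Borel--Weil line bundle on $G\mu$) whose sections satisfy $\Gamma(\overline{G\mu},(L^-_\mu)^{\otimes n})\simeq (V^G_{n\mu})^*$ as $G$-modules. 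Set $\tilde{\Lcal}:=\Lcal\boxtimes L^-_\mu$.

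The next step will be to analyze the Hamiltonian data on $\tilde{M}$. Since $\overline{G\mu}$ has moment map $\xi\mapsto -\xi$, the moment map for the diagonal $G$-action on $\tilde{M}$ reads $\tilde{\Phi}(m,\xi)=\Phi(m)-\xi$; it is proper because $\Phi$ is proper and $\overline{G\mu}$ is compact. Projection onto the first factor restricts to a $G$-equivariant homeomorphism $\tilde{\Phi}^{-1}(0)\simeq \Phi^{-1}(G\mu)$, and descends (at the level of GIT quotients) to a homeomorphism
\[
\tilde{M}_0\;\simeq\; M_\mu.
\]
On the other hand, the K\"unneth formula combined with Borel--Weil yields
\[
\dim\Gamma(\tilde{M},\tilde{\Lcal}^{\otimes n})^G \;=\; \dim\bigl[\Gamma(M,\Lcal^{\otimes n})\otimes (V^G_{n\mu})^*\bigr]^G \;=\; \mm^{\Lcal}(n\mu,n).
\]

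Applying the first two bullets of Lemma~\ref{lem-de-base-0} to the pair $(\tilde{M},\tilde{\Lcal})$ will then translate the equivalences there for $\dim\Gamma(\tilde{M},\tilde{\Lcal}^{\otimes n})^G$ versus $\tilde{M}_0$ into the two equivalences sought for $\mm^{\Lcal}(n\mu,n)$ versus $M_\mu$, which is exactly the content of Lemma~\ref{lem-de-base}. The one step requiring a little care, and the main obstacle, is the verification that the shifted pair $(\tilde{M},\tilde{\Lcal})$ genuinely fits the framework of Section~\ref{sec:Kahler}: that $\tilde{\Lcal}$ is holomorphic Hermitian with K\"ahler curvature, that the diagonal $G$-action extends to $G_\C$, and that $\tilde{\Phi}$ is proper. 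These are all standard consequences of $\overline{G\mu}$ being a compact projective $G_\C$-variety, after which the argument is purely formal.
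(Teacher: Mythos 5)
Your proof is correct and is exactly the paper's argument: the paper's proof of Lemma~\ref{lem-de-base} is precisely the shifting trick, applying Lemma~\ref{lem-de-base-0} to the K\"ahler manifold $M\times (G\mu)^-$ prequantized by $\Lcal\otimes[\C_{-\mu}]$. You have simply spelled out the standard details (moment map of the shifted manifold, identification $\tilde{M}_0\simeq M_\mu$, K\"unneth plus Borel--Weil) that the paper leaves implicit.
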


\begin{proof}
It is a direct consequence of the shifting trick by applying Lemma \ref{lem-de-base-0} to the K\"ahler manifold 
$M\times (K\mu)^-$ prequantized by the holomorphic line bundle $\Lcal\otimes [\C_{-\mu}]$.
\end{proof}

\medskip

We finish this section by recalling the following fact.

\begin{lem}\label{lem:M-0-point}
\begin{itemize}
\item Suppose that $\mathbf{H}(1)\neq 0$. Then for any holomorphic vector bundle $\Ecal\to M$,  the sequence $\mathbf{H}_\Ecal(n)=\dim \Gamma(M,\Ecal\otimes \Lcal^{\otimes n})^G$ is increasing.

\item Let $m_o\in \Phi^{-1}(0)$ with stabilizer subgroup $H$. We consider the $H$-module $\mathbb{W}:=\T_{m_o}M/\ggot_\C\cdot m_o$. Then $\Phi^{-1}(0)=Gm_o$ if and only if the algebra 
$\Sym(\mathbb{W}^*)$ has finite $H$-multiplicities.
\end{itemize}
\end{lem}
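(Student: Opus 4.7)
For the first assertion I use the hypothesis $\mathbf{H}(1)\neq 0$ to pick a nonzero invariant section $s\in\Gamma(M,\Lcal)^G$. Tensor multiplication by $s$ defines a $G$-equivariant $\Ocal_M$-linear morphism $\Ecal\otimes\Lcal^{\otimes n}\to\Ecal\otimes\Lcal^{\otimes(n+1)}$, hence on $G$-invariant global sections a linear map $\Gamma(M,\Ecal\otimes\Lcal^{\otimes n})^G\to\Gamma(M,\Ecal\otimes\Lcal^{\otimes(n+1)})^G$. Injectivity is immediate from the identity principle: if $s\cdot\sigma=0$ then $\sigma$ vanishes on the open dense subset $\{s\neq 0\}$ of the connected complex manifold $M$, so $\sigma\equiv 0$. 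This yields the monotonicity $\mathbf{H}_\Ecal(n)\leq \mathbf{H}_\Ecal(n+1)$.

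For the second assertion, which restates Proposition \ref{prop:equivalences}, I would combine three classical ingredients. First, the Marle--Guillemin--Sternberg symplectic slice theorem identifies a $G$-invariant tubular neighborhood $U$ of $Gm_o$ in $M$ with a neighborhood of the zero section in $G\times_H(\hgot^\perp\oplus\mathbb{W})$, whose moment map reads $[g;\xi,w]\mapsto g\cdot(\xi+\Phi_\mathbb{W}(w))$, where $\Phi_\mathbb{W}:\mathbb{W}\to\hgot^*$ is the quadratic moment map of the linear $H$-action. Consequently $\Phi^{-1}(0)\cap U$ corresponds to $G\times_H\Phi_\mathbb{W}^{-1}(0)$, and since $\Phi_\mathbb{W}$ is homogeneous of degree two the set $\Phi_\mathbb{W}^{-1}(0)\subset \mathbb{W}$ is a closed cone, so its vanishing near $0$ is equivalent to its vanishing globally in $\mathbb{W}$. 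Second, the linear Kempf--Ness theorem identifies $\Phi_\mathbb{W}^{-1}(0)/H$ with the affine GIT quotient $\mathbb{W}/\!\!/H_\C$, which collapses to a point if and only if $\Sym(\mathbb{W}^*)^H=\C$. Third, Hilbert's finite-generation theorem for covariants asserts that each $H$-isotypic component of $\Sym(\mathbb{W}^*)$ is finitely generated over $\Sym(\mathbb{W}^*)^H$; when the latter equals $\C$, every isotypic component is automatically finite-dimensional, and the converse is trivial since $\Sym(\mathbb{W}^*)^H$ is a graded algebra with one-dimensional degree-zero part.

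Putting these pieces together, $\Phi^{-1}(0)\cap U=Gm_o$ is equivalent to $\Sym(\mathbb{W}^*)$ having finite $H$-multiplicities. The main obstacle is the passage from this local statement to the global equality $\Phi^{-1}(0)=Gm_o$. The local analysis only shows that $Gm_o$ is open in $\Phi^{-1}(0)$, and since $Gm_o$ is compact it is also closed, hence a union of connected components of $\Phi^{-1}(0)$. To conclude I would invoke the connectedness of the complex analytic space $M/\!\!/G_\C$, which is homeomorphic to $M_0$ (as recalled before Theorem \ref{theo:L_0-positive}) and which inherits connectedness from $M$; this forces $\Phi^{-1}(0)/G$ to reduce to the single point $[m_o]$, giving the desired equality.
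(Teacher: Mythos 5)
Your proof is correct and follows essentially the same route as the paper: multiplication by a nonzero invariant section of $\Lcal$ for the first bullet, and the Marle--Guillemin--Sternberg local model $G\times_H(\hgot^\perp\oplus\mathbb{W})$ together with the standard equivalence between $\Phi_{\mathbb{W}}^{-1}(0)=\{0\}$ and finiteness of the $H$-multiplicities of $\Sym(\mathbb{W}^*)$ for the second. The only step you elide is the identification of the symplectic slice $(\ggot\cdot m_o)^{\perp}/\ggot\cdot m_o$ furnished by the slice theorem with the complex quotient $\mathbb{W}=\T_{m_o}M/\ggot_\C\cdot m_o$ appearing in the statement --- the paper devotes the last part of its proof to this, using a compatible complex structure $J$ and the isotropy of $\ggot\cdot m_o$ at a zero of $\Phi$ --- while, in the other direction, you make explicit both the Kempf--Ness/Hilbert justification of the ``standard fact'' and the connectedness argument needed to pass from the local statement $\Phi^{-1}(0)\cap U=Gm_o$ to the global one, both of which the paper leaves implicit.
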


\begin{proof}
The first point follows from the fact that for any non-zero section 
$s\in \Gamma(M, \Lcal)^G$, the linear map $w\mapsto w\otimes s$ defines 
a one to one map from $\Gamma(M,\Ecal\otimes \Lcal^{\otimes n})^G$  into
$\Gamma(M,\Ecal\otimes \Lcal^{\otimes n+1})^G$.

Let us check the second point. The vector space $\ggot\cdot m_o\subset \T_{m_o} M$ is totally isotropic, 
since $ \Omega_{m_o}(X\cdot m_o,Y\cdot m_o)=\langle \Phi(m_o),[X,Y]\rangle=0$. Hence we can consider 
the vector space $E_{m_o}:=(\ggot\cdot m_o)^\perp/\ggot\cdot m_o$ that is equipped  with a 
$H$-equivariant symplectic structure $\Omega_{E_{m_o}}$ : we denote by
$\Phi_{E_{m_o}}:E_{m_o}\to \hgot^*$ the corresponding moment map.  A local model
for a symplectic neighborhood of $Gm_o$ is $G\times_{H}( \hgot^\perp\times E_{m_o})$ where the moment map is
$\Phi_{m_o}[g;\xi,v]=g(\xi+ \Phi_{E_{m_o}}(v))$. We see then that $\Phi^{-1}(0)=Gm_{o}$ if and only if the set 
$\Phi^{-1}_{E_{m_o}}(0)$ is reduced to $\{0\}$, and it is a standard fact that $\Phi^{-1}_{E_{m_o}}(0)=\{0\}$ if and only if 
the algebra $\Sym(E_{m_o}^*)$ has finite $H$-multiplicities.

We are left to prove that $E_{m_o}\simeq \mathbb{W}$. Let $J$ be a complex structure on $\T_{m_o} M$ compatible with the symplectic form $\Omega_{m_o}$. Since the vector space $\ggot_\C\cdot m_o$ is equal to the symplectic subspace
$\ggot\cdot m_o\oplus J(\ggot\cdot m_o)$, the $H$-module $\mathbb{W}$ 
has a canonical identification with its (symplectic) orthogonal 
$(\ggot\cdot m_o\oplus J(\ggot\cdot m_o))^\perp$. Finally the orthogonal decomposition 
$$
(\ggot\cdot m_o\oplus J(\ggot\cdot m_o))^\perp\oplus 
\ggot\cdot m_o=(\ggot\cdot m_o)^\perp
$$
shows that the complex $H$-modules $\mathbb{W}$ and $E_{m_o}$ are equal.

\end{proof}

%%%%%%%%%%%%%%%%%%%%%%%%%%%%%%%%%%%%%%%%%%%%%%%%%%%%%%%%%%%%%%%%%%%%%%%%%%%%%%%%%%%%%%%%%
%%%%%%%%%%%%%%%%%%%%%%%%%%%%%%%%%%%%%%%%%%%%%%%%%%%%%%%%%%%%%%%%%%%%%%%%%%%%%%%%%%%%%%%%%
\section{Witten deformation}\label{sec:witten}
%%%%%%%%%%%%%%%%%%%%%%%%%%%%%%%%%%%%%%%%%%%%%%%%%%%%%%%%%%%%%%%%%%%%%%%%%%%%%%%%%%%%%%%%%
%%%%%%%%%%%%%%%%%%%%%%%%%%%%%%%%%%%%%%%%%%%%%%%%%%%%%%%%%%%%%%%%%%%%%%%%%%%%%%%%%%%%%%%%%

%%%%%%%%%%%%%%%%%%%%%%%%%%%%%%%%%%%%%%%%%%%%%%%%%%%%%%%%%%%%%%%%%%%%%%%%%%%%%%%%%%%%%%%%%
\subsection{Elliptic and transversally elliptic symbols}\label{subsec:trans-elliptic}
%%%%%%%%%%%%%%%%%%%%%%%%%%%%%%%%%%%%%%%%%%%%%%%%%%%%%%%%%%%%%%%%%%%%%%%%%%%%%%%%%%%%%%%%%

Let us recall the basic definitions from the theory of transversally
elliptic symbols (or operators) defined by Atiyah-Singer in
\cite{Atiyah74}.

Let $M$ be a compact $G$-manifold with cotangent bundle $\T^*M$. Let $p:\T^*
M\to M$ be the projection.
If $\Ecal$ is a vector bundle on $M$, we may denote still by $\Ecal$ the vector bundle $p^*\Ecal$ on the cotangent bundle $\T^*M$.
If $\Ecal^{+},\Ecal^{-}$ are
$G$-equivariant  complex vector bundles over $M$, a
$G$-equivariant morphism $\sigma \in \Ccal^{\infty}(\T^*
M,\hatom(\Ecal^{+},\Ecal^{-}))$ is called a {\em symbol} on $M$.
 For $x\in  M$, and $\nu\in T_x^*M$,  thus $\sigma(x,\nu):\Ecal\vert_x^{+}\to
\Ecal\vert_x^{-}$
is a linear map.
The
subset of all $(x,\nu)\in \T^* M$ where the map $\sigma(x,\nu)$  is not invertible is called the {\em characteristic set}
of $\sigma$, and is denoted by $\Char(\sigma)$.
A symbol  is elliptic if its characteristic set is compact.

%
%
%Define the exterior product of symbols.
%Let $M_1,M_2$ be two manifolds with action of the  compact groups $K_1,K_2$ and   let $M=M_1\times M_2$.
%We write an element of $\T^*M=\T^* M_1\times \T^*M_2$ as $((x_1,\nu_1),(x_2,\nu_2))$.
%Let $f\in \Gamma(\T^*
%M_1,\hatom(\Ecal^{+},\Ecal^{-}))$ and $g
%\in \Gamma(\T^*
%M_2,\hatom(\Fcal^{+},\Fcal^{-}))$ be two symbols.
% Using $K$-invariant  Hermitian metrics on the bundles $\Ecal^\pm,\Fcal^\pm$ we  define the  symbol
%$f \boxtimes g\in \Gamma(T^*M,(\Ecal\otimes \Fcal)^+,  (\Ecal\otimes \Fcal)^-)$
%by $$(f\boxtimes g)((x_1,\nu_1),(x_2,\nu_2))=f(x_1,\nu_1)\boxtimes g(x_2,\nu_2).$$
%
%The symbol $f\boxtimes g$ is called the exterior product of the symbols $f,g$.
%Note that $\Char( f\boxtimes g)=\Char(f)\times \Char(g) $.
%
%
%
%
%
%Consider now $M_1=M_2=M$ with an action of $K$. Let $f,g$ be two symbols on $M$. We still denote by
%$f\boxtimes g$ the restriction of the exterior product  (a symbol  on $M\times M$) to $M$ embedded as the diagonal in $M\times M$.
%

The product of a symbol  $\sigma$
by a  $G$-equivariant complex vector bundle $\Fcal\to M$ is the symbol
$\sigma\otimes \Fcal$
defined by
$$(\sigma\otimes \Fcal)(x,\nu)=\sigma(x,\nu)\otimes 1_{\Fcal_x}.$$
%
%If $[E]$ is the trivial vector bundle associated to a representation space $E$ of $K$, we denote
%$\sigma\otimes [E]$ simply by $\sigma\otimes E$.
%
%If $E=E^+\oplus E^-$ is $\Z/2\Z$ graded , then
%$$\sigma \otimes E=\left(
%                \begin{array}{cc}
%                  \sigma\otimes 1_{E^+} &0 \\
%                 0 & \sigma^*\otimes 1_{E^-} \\
%                \end{array}
%              \right).$$
%

 An elliptic symbol $\sigma$  on $M$ defines an
element $[\sigma]$ in the equivariant $\K$-theory of $\T^*M$ with compact
support, which is denoted by $\Ko_{G}(\T^* M)$. 
The
index of $\sigma$ is a virtual finite dimensional representation of
$G$, that we denote by $\indice_{G}^M(\sigma)\in R(G)$
\cite{Atiyah-Segal68,Atiyah-Singer-1,Atiyah-Singer-2,Atiyah-Singer-3}.

Recall the notion of {\it transversally elliptic symbol}.
Let  $\T^*_G M$ be the following $G$-invariant closed subset of $\T^*M$
$$
   \T^*_{G}M\ = \left\{(x,\nu)\in \T^* M,\ \langle \nu,X\cdot x\rangle=0 \quad {\rm for\ all}\
   X\in\ggot \right\} .
$$
 Its fiber over a point $x\in M$ is  formed by all the cotangent vectors $v\in T^*_xM$  which vanish on the tangent space to the orbit of $x$  under $G$, in the point $x$. Thus   each fiber $(\T^*_G M)_x$ is a linear subspace  of $T_x^* M$. In general the dimension of $(\T^*_GM)_x$  is not constant and this space is not a vector bundle.
A symbol $\sigma$ is  $G$-{\em transversally elliptic} if the
restriction of $\sigma$ to $\T^*_{G} M$ is invertible outside a
compact subset of $\T^*_{G} M$ (i.e. $\Char(\sigma)\cap
\T_{G}^*M$ is compact).

A $G$-{\em transversally elliptic} symbol $\sigma$ defines an
element of $\Ko_{G}(\T^*_{G}M)$, and the index of
$\sigma$
defines an element $\indice_G^M(\sigma)$ of $\hat{R}(G)$.

The index map $\indice_{G}^M: \Ko_K(\T_G^*M)\to \hat R(G)$ is a morphism of $R(G)$ module:  for any $G$-module $V$,
\begin{equation}\label{eq:indextimesVmu}
\indice_{G}^M(\sigma\otimes V)= \indice_{G}^M(\sigma)\otimes V.
\end{equation}

Any elliptic symbol  is $G$-transversally
elliptic, hence we have a restriction map
$\K_{G}^0(\T^* M)\to \Ko_{G}(\T_{G}^*M)$, and a commutative
diagram
\begin{equation}\label{indice.generalise}
\xymatrix{ \Ko_{G}(\T^* M) \ar[r]\ar[d]_{\indice_G^M}
&
\Ko_{G}(\T_{G}^*M)\ar[d]^{\indice_G^M}\\
R(G)\ar[r] & \Rfor(G)\ .
   }
\end{equation}

\medskip

Using the {\em excision property}, one can easily show that the
index map $\indice_G^\Ucal: \K_{G}^0(\T_{G}^*\Ucal)\to
\Rfor(G)$ is still defined when $\Ucal$ is a
$G$-invariant relatively compact open subset of a
$G$-manifold (see \cite{pep-RR}[section 3.1]).

Finally the index map $\indice_G^M: \K_{G}^0(\T_{G}^*M)\to
\Rfor(G)$ can be still defined when $M$ is a {\em non-compact} manifold. Any class $\sigma\in \K_{G}^0(\T_{G}^*M)$ is represented
by a symbol on $M$ with a characteristic set $\Char(\sigma)\subset \T^* M$ intersecting $\T^*_G M$ in a compact set. Let
$\Ucal$ be a $G$-invariant relatively compact open subset of $M$ such that $\Char(\sigma)\cap \T^*_G M\subset \T^*\Ucal$. The restriction
$\sigma\vert_\Ucal$ defines a $G$-transversally elliptic symbol on $\Ucal$, and we take
$$
\indice_G^M(\sigma):=\indice_G^\Ucal(\sigma\vert_\Ucal).
$$

% In particular, if $V$ is a vector space with a linear  action of a compact group $K$,
% we can define the index of any $\sigma\in \K_K^0(\T_K^*V)$.

{\bf Remark :} In the following the manifold $M$ will carry a $G$-invariant Riemannian metric and we will denote by
$\nu\in \T^*M \mapsto \tilde{\nu}\in \T M$ the corresponding identification.

%%%%%%%%%%%%%%%%%%%%%%%%%%%%%%%%%%%%%%%%%%%%%%%%%%%%%%%%%%%%%%
\subsection{Localization of the Riemann-Roch character}\label{sec:localisation}
%%%%%%%%%%%%%%%%%%%%%%%%%%%%%%%%%%%%%%%%%%%%%%%%%%%%%%%%%%%%%%

Let $M$ be a $G$-manifold equipped with an invariant almost complex structure $J$. Let $p:\T M\to M$ be the projection. The complex vector bundle $(\T^* M)^{0,1}$ is $G$-equivariantly identified with the tangent bundle $\T M$ equipped with the complex structure $J$. Let $h_M$ be an  Hermitian structure on  $(\T M,J)$. The symbol 
$\Thom(M,J)\in 
\Ccal^{\infty}\left(\T^* M,\hatom(p^{*}(\wedge_{\C}^{even} \T M),\,p^{*}
(\wedge_{\C}^{odd} \T M))\right)$  
at $(m,\nu)\in \T M$ is equal to the Clifford map
\begin{equation}\label{eq.thom.complex}
 \clif_{m}(\nu)\ :\ \wedge_{\C}^{even} \T_m M
\longrightarrow \wedge_{\C}^{odd} \T_m M,
\end{equation}
where $\clif_{m}(\nu).w= \tilde{\nu}\wedge w - \iota(\tilde{\nu})w$ for $w\in 
\wedge_{\C}^{\bullet} \T_{m}M$. Here $\iota(\tilde{\nu}):\wedge_{\C}^{\bullet} 
\T_{m}M\to\wedge_{\C}^{\bullet -1} \T_{m}M$ denotes the 
contraction map relative to $h_M$. Since $\clif_{m}(\nu)^2=-\|\nu\|^2 {\rm Id}$, the map  
$\clif_{m}(\nu)$ is invertible for all $\nu\neq 0$. Hence the symbol $\Thom(M,J)$ is elliptic when the manifold $M$ is compact. 

\begin{defi}
Suppose that $M$ is compact. To any $G$-equivariant complex vector bundle $\Ecal\to M$, we associate its Riemann-Roch character 
$$
\RR^J_G(M,\Ecal):=\indice_G^M(\Thom(M,J)\otimes \Ecal)\in R(G).
$$
If the complex structure $J$ is understood we just denote $\RR_G(M,-)$ the Riemann-Roch character.
\end{defi}

\begin{rem}
The character $\RR_G(M, \Ecal)$ is equal to the equivariant index of the Dolbeault-Dirac operator $\Dcal_\Ecal:=\sqrt{2}(\overline{\partial}_{\Ecal} + \overline{\partial}^*_{\Ecal})$, since $\Thom(M,J)\otimes \Ecal$  corresponds to the 
principal symbol  of $\Dcal_\Ecal$ (see \cite{B-G-V}[Proposition 3.67]).  
\end{rem}

\medskip

Let us briefly explain how we perform the ``Witten''  deformation the symbol $\Thom(M,J)$ with the help of 
an equivariant map $\phi:M\to \ggot^*$ \cite{pep-RR,Ma-Zhang14,pep-vergne:witten}. 
Consider the identification $\xi\mapsto\wtde{\xi},\ggot^*\to\ggot$ defined by a $G$-invariant scalar product
on  $\ggot^*$. We define the {\em Kirwan vector field}: 
\begin{equation}\label{eq-kappa}
    \kappa_\phi(m)= \left(\wtde{\phi(m)}\right)_M(m), \quad m\in M.
\end{equation}

We denote $Z_\phi\subset M$ the subset where $\kappa_\phi$ vanishes.

\begin{defi}\label{def:pushed-sigma}
The symbol  $\Thom(M,J)$ pushed by the vector field $\kappa_\phi$ is the symbol $\clif_\phi$ 
defined by the relation
$$
\clif_\phi\vert_m(\nu)=\Thom(M,J)\vert_m(\tilde{\nu}-\kappa_\phi(m))
$$
for any $(m,v)\in\T^* M$. 
\end{defi}

Note that $\clif_\phi\vert_m(\nu)$ is invertible except if
$\tilde{\nu}=\kappa_\phi(m)$. If furthermore $\nu$ belongs to the subset $\T^*_G M$
of cotangent vectors orthogonal to the $G$-orbits, then $\nu=0$ and
$m\in Z_\phi=\{\kappa_\phi=0\}$.  Indeed $\kappa_\phi(m)$ is tangent to $G\cdot m$ while
$\nu$ is orthogonal. Finally we have 
$\Char(\clif_\phi)\cap \T_G^* M \simeq
Z_\phi$.

\begin{defi}
When the critical set $Z_\phi$ is compact, we define \break 
$\RR_G(M,\Ecal,\phi)\ \in\ \hat{R}(G)$ as the equivariant index of the transversally 
elliptic symbol $\clif_\phi\otimes \Ecal\in \Ko_G(\T_G^* M)$.
\end{defi}

When $M$ is compact, it is clear that the classes of the symbols $\clif_\phi\otimes \Ecal$ and
$\Thom(M,J)\otimes \Ecal$ are equal in $\K_{G}^0(\T_{G}^*M)$, hence the equivariant indices 
$\RR_G(M,\Ecal)$ and $\RR_G(M,\Ecal,\phi)$ are equal.

For any $G$-invariant open subset $U\subset M$ such that $U\cap Z_\phi$ is compact in $M$, we see that the restriction
$\clif_\phi\vert_U$ is a transversally elliptic symbol on $U$, and so its equivariant index is a well defined element in
$\hat{R}(G)$.

\begin{defi}\label{def:indice-localise}
$\bullet$ A closed invariant subset $Z\subset Z_\phi$ is called a component 
if it is a union of connected components of $Z_\phi$.

$\bullet$ For a compact component $Z$ of $Z_\phi$, we denote by
$$
\RR_G(M,\Ecal,Z,\phi)\ \in\ \hat{R}(G)
$$
the equivariant index of $\clif_\phi\otimes \Ecal\vert_{\T^* U}$, where $U$ is any $G$-invariant open subset 
 such that $U\cap \{\kappa_{\phi}=0\}=Z$. By definition, $\RR_G(M,\Ecal,Z,\phi)=0$ when $Z=\emptyset$.
\end{defi}

In this paper we will have a particular interest to the character
$$
\RR_G(M,\Ecal,\phi^{-1}(0),\phi)\in \hat{R}(G).
$$
which is defined when $\phi^{-1}(0)$ is a compact component of $Z_\phi$.

\subsection{$[Q,R]=0$}\label{sec:QR=0}
%%%%%%%%%%%%%%%%%%%%%%%%%%%%%%%%%%%%%%%%%%%%%%%%%%%%%%%%%%%%%%

When $(M,\Omega,\Phi)$ is a compact Hamiltonian $G$-manifold, the Riemann-Roch character $\RR_G(M,-)$ is computed 
with an invariant almost complex structure $J$ that is compatible with $\Omega$. Here the Kirwan vector field 
$\kappa_\Phi$ is the Hamiltonian vector field of the function $\frac{-1}{2}\|\Phi\|^2$. Hence the set $Z_\Phi$ 
of zeros of $\kappa_\Phi$ coincides with the set of critical points of $\|\Phi\|^2$. When $M$ is non compact but 
the critical set $Z_\Phi$ is compact, we can define the localized Riemann-Roch character $\RR_G(M,-,\Phi)$. If moreover the map $\Phi$ is proper, the set $\Phi^{-1}(0)$ will be a compact component of $Z_\Phi$, so we can consider the 
localized Riemann-Roch character $\RR_G(M,-,\Phi^{-1}(0),\Phi)$.

Let $\Lcal\to M$ be a Hermitian line bundle that prequantizes the data $(M,\Omega,\Phi)$.  In this setting 
we are interested by the  dimension of the trivial $G$-representation in $\RR_G(M,\Lcal^{\otimes n})$ 
that we simply denote $[\RR_G(M,\Lcal^{\otimes n})]^G\in\Z$.

One of the main fact of this localization procedure is the following

\begin{theo}[\cite{pep-RR, pep-vergne:witten}]\label{th:QR=0}
Let $(M,\Omega,\Phi)$ be a Hamiltonian $G$-manifold prequantized by a line bundle $\Lcal$. Let $\Ecal$ be an equivariant vector bundle on $M$.

$\bullet$ When $M$ is compact, we have 
\begin{eqnarray*}
\left[\RR_G(M,\Lcal^{\otimes n})\right]^G&=&\left[\RR_G(M,\Lcal^{\otimes n},\Phi^{-1}(0),\Phi)\right]^G,
\ \mathrm{for}\ n\geq 1, \\
\left[\RR_G(M,\Lcal^{\otimes n}\otimes\Ecal)\right]^G&=&\left[\RR_G(M,\Lcal^{\otimes n}\otimes\Ecal,\Phi^{-1}(0),\Phi)\right]^G,\ \mathrm{for}\  n >\!> 1.
\end{eqnarray*}

$\bullet$ If $\Phi$ is proper and the critical set $Z_\Phi$ is compact,  we have 
\begin{eqnarray*}
\left[\RR_G(M,\Lcal^{\otimes n},\Phi)\right]^G&=&\left[\RR_G(M,\Lcal^{\otimes n},\Phi^{-1}(0),\Phi)\right]^G,
\ \mathrm{for}\ n\geq 1, \\
\left[\RR_G(M,\Lcal^{\otimes n}\otimes\Ecal,\Phi)\right]^G&=&\left[\RR_G(M,\Lcal^{\otimes n}\otimes\Ecal,\Phi^{-1}(0),\Phi)\right]^G,
\ \mathrm{for}\  n >\!> 1.
\end{eqnarray*}

\end{theo}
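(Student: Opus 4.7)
The plan is to prove both identities by the Witten deformation strategy: decompose the localized Riemann-Roch character along the connected components of the critical set $Z_\Phi$ of $\|\Phi\|^2$, and show that every contribution away from $\Phi^{-1}(0)$ has vanishing $G$-invariant part (possibly after tensoring by a high enough power of $\Lcal$).

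First, in the compact case, I would reduce $\RR_G(M,\Lcal^{\otimes n}\otimes\Ecal)$ to $\RR_G(M,\Lcal^{\otimes n}\otimes\Ecal,\Phi)$ by a homotopy argument. The family of symbols obtained from Definition \ref{def:pushed-sigma} by replacing $\kappa_\Phi$ with $t\kappa_\Phi$, $t\in[0,1]$, interpolates between $\Thom(M,J)\otimes\Ecal$ and $\clif_\Phi\otimes\Ecal$ and stays transversally elliptic throughout, so it defines a homotopy in $\Ko_G(\T^*_G M)$ between the two classes, whose equivariant indices therefore agree. In the non-compact case this identification is built into the definition of $\RR_G(M,-,\Phi)$ itself, and the proof continues identically from this point on.

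Next I would decompose $Z_\Phi$ using Kirwan's stratification. Identifying $\ggot^*\simeq\ggot$ by an invariant inner product, $Z_\Phi$ is a disjoint union of closed $G$-invariant subsets $C_\beta$ indexed by a finite set $\Bcal\subset\tgot^*_{\geq 0}$ (finiteness uses compactness of $Z_\Phi$, which holds under both hypotheses of the theorem), with $C_\beta\subset M^{\tilde{\beta}}\cap\Phi^{-1}(G\beta)$. By the excision property of the transversally elliptic index,
\begin{equation*}
\RR_G(M,\Lcal^{\otimes n}\otimes\Ecal,\Phi)\;=\;\sum_{\beta\in\Bcal}\RR_G(M,\Lcal^{\otimes n}\otimes\Ecal,C_\beta,\Phi).
\end{equation*}
Both identities in the theorem will follow once it is shown that, on $G$-invariants, only the summand $\beta=0$ survives.

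The heart of the proof, and the main obstacle, is the vanishing
\begin{equation*}
\bigl[\RR_G(M,\Lcal^{\otimes n}\otimes\Ecal,C_\beta,\Phi)\bigr]^G\;=\;0\qquad\text{for }\beta\neq 0,\ n\gg 1,
\end{equation*}
together with the stronger statement that the same vanishing already holds for all $n\geq 1$ when $\Ecal$ is absent. I would establish this by writing a $G$-equivariant local model: a neighborhood of $C_\beta$ is diffeomorphic to $G\times_{G_\beta}\Ucal_\beta$ where $\Ucal_\beta$ is a neighborhood of $C_\beta$ in the product of $M^{\tilde\beta}$ with the normal bundle of $C_\beta$ inside the $\tilde\beta$-fixed set. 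The Kostant prequantization relation (\ref{eq:kostant-rel}) forces the infinitesimal generator $\tilde\beta$ to act on $\Lcal\vert_{C_\beta}$ with weight $i\beta$, so tensoring by $\Lcal^{\otimes n}$ shifts every $T$-weight appearing in the $\tilde{R}(G)$-expansion of this local contribution by $n\beta$. A direct computation of the transversally elliptic index on the normal model, using that the Kirwan vector field points away from $C_\beta$ along the direction of $\beta$, confines all weights appearing in the local index to a half-space of the form $\{\xi:(\xi,\beta)\geq n\|\beta\|^2-c_\Ecal\}$, where $c_\Ecal$ depends only on the $\tilde\beta$-weights of $\Ecal\vert_{M^{\tilde\beta}}$ (and $c_\Ecal=0$ in the absence of $\Ecal$). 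Thus the weight $0$ is excluded for $n$ large enough (resp. for all $n\geq 1$), giving the vanishing. Summing over $\Bcal$ leaves only $\beta=0$, which proves both claims.
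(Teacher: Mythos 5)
The paper does not prove Theorem \ref{th:QR=0} itself---it is imported from \cite{pep-RR,pep-vergne:witten}---and your sketch reproduces essentially the argument of those references: deformation of the Thom symbol to $\clif_\Phi$, decomposition of $Z_\Phi$ into the Kirwan pieces $C_\beta$ via excision, and the $\tilde\beta$-weight estimate near each $C_\beta$ (shifted by $n\|\beta\|^2$ through the Kostant relation) that kills the $G$-invariant part of every $\beta\neq 0$ contribution for $n\gg 1$, and for all $n\geq 1$ in the absence of $\Ecal$. This is the correct and standard route; the only substantive step you defer is the explicit computation of the local index on the normal model that justifies the half-space confinement of the weights, which is precisely the content of the localization theorem in the cited works.
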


\medskip

Let us finish this section by explaining the cases where the quantity 
$\left[\RR_G(M,\Ecal,\Phi^{-1}(0),\Phi)\right]^G$ can be computed as an 
index on the reduced space $M_0$.

First suppose that $0$ is a regular value of $\Phi$. The reduced space $M_0$ is a symplectic orbifold, and we 
can define in this context a Riemann-Roch character $\RR(M_0,-)$ with the help of a compatible almost complex structure. For any equivariant vector bundle $\Fcal$ on $M$ we define the orbibundle $\Fcal_0:=\Fcal\vert_{\Phi^{-1}(0)}/G$ on $M_0$, and we have 
$$
\left[\RR_G(M,\Fcal,\Phi^{-1}(0),\Phi)\right]^G=\RR(M_0,\Fcal_0).
$$

Suppose now that $0$ is a quasi-regular value of $\Phi$. It is the case when there exists a sub-algebra 
$\hgot$  of $\ggot$  such that $Z:=\Phi^{-1}(0)$ is contained in the sub-manifold $M_{(\hgot)}=GM_{\hgot}$ 
where $M_\hgot=\{m\in M, \ggot_m=\hgot\}$. Let $N$ be the normalizer subgroup of $\hgot$ in $G$, 
and let $H^o$ be the closed connected subgroup of $G$ with Lie algebra $\hgot$.  Thus 
$M_{(\hgot)}\simeq G\times_N M_\hgot$ and $Z\simeq G\times_N Z_\hgot$ where 
$Z_\hgot:=\Phi^{-1}(0)\cap M_\hgot$ is a compact $N$-submanifold of $M$ with a locally 
free action of $N/H^o$. Then the reduced space
$$
M_{0}:=\Phi^{-1}(0)/G= Z_\hgot/ (N/H^o)
$$
is a compact symplectic orbifold.

Let $\Wcal\to Z$ be the symplectic normal bundle of the submanifold $Z$ in $M$: for $x\in Z$,
$$
\Wcal\vert_x= (\T_x Z)^{\perp}/(\T_x Z)^{\perp}\cap\T_x Z,
$$
were we have denoted by $(\T_x Z)^{\perp}$ the orthogonal with respect to the symplectic form. We can 
equip $\Wcal$ with an $H$-invariant Hermitian structure $\mathrm{h}$ such that the symplectic structure on the fibres
of $\Wcal\to Z$ is equal to $-\mathrm{Im}(\mathrm{h})$.

  The sub-algebra $\hgot$ acts fiberwise on the complex vector bundle  $\Wcal\vert_{Z_\hgot}$. We consider 
  the action of $\hgot$ on the fibres of the complex bundle $\Sym(\Wcal^*\vert_{Z_\hgot})$. We will use the following result
  (\cite{pep-vergne:witten}[Section 12.2]).

\begin{lem}\label{lem:sym}
The sub-bundle $[\Sym(\Wcal^*\vert_{Z_\hgot})]^\hgot$ is reduced to the trivial bundle $[\C]\to Z_\hgot$.
\end{lem}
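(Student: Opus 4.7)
My plan is to prove the lemma pointwise. The constants $\C\cdot 1$ already provide a nowhere-vanishing global section of $[\Sym(\Wcal^*|_{Z_\hgot})]^\hgot$, so it is enough to show that at every $x\in Z_\hgot$ the fibre $[\Sym(\Wcal|_x^*)]^\hgot$ is reduced to $\C$.

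The first step will be the Marle-Guillemin-Sternberg symplectic slice theorem at $x$: a $G$-invariant neighbourhood of $Gx$ in $M$ is symplectomorphic to $G\times_{H^o}(\hgot^\perp\oplus V_x)$ with moment map $\Phi([g;\xi,w])=g\cdot(\xi+\Phi_{V_x}(w))$, where $V_x$ is a Hermitian symplectic $H^o$-module (the symplectic slice at $x$) and $\Phi_{V_x}:V_x\to\hgot^*$ is its quadratic moment map. Decomposing $V_x=V_x^{H^o}\oplus V'$ as a symplectic $H^o$-module, the moment map splits as $\Phi_{V_x}=0\oplus\Phi_{V'}$, since a trivial action has zero moment map. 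A direct computation of the symplectic orthogonal of $\T_x Z=\ggot/\hgot\oplus 0\oplus V_x^{H^o}$ inside $\T_xM\simeq\ggot/\hgot\oplus\hgot^\perp\oplus V_x$ then gives $(\T_x Z)^\perp=\ggot/\hgot\oplus 0\oplus V'$ and $(\T_x Z)^\perp\cap\T_x Z=\ggot/\hgot$, so that $\Wcal|_x\simeq V'$.

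The second step will exploit the quasi-regularity hypothesis $Z\subset M_{(\hgot)}$. In the local model a point $[e;0,w]$ lies in $Z$ precisely when $\Phi_{V_x}(w)=0$, and its $G$-stabiliser is the $H^o$-stabiliser $H^o_w$; the condition $Z\subset M_{(\hgot)}$ forces $\hgot_w=\hgot$, i.e.\ $w\in V_x^{H^o}$. Thus $\Phi_{V_x}^{-1}(0)\subset V_x^{H^o}$ in a neighbourhood of $0$, which by the splitting $\Phi_{V_x}=0\oplus\Phi_{V'}$ is equivalent to $\Phi_{V'}^{-1}(0)=\{0\}$, i.e.\ the origin is the only zero of the moment map on the non-trivial part of the slice.

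Finally I will invoke the Kempf-Ness theorem for the linear Hermitian $H^o$-action on $V'$: the condition $\Phi_{V'}^{-1}(0)=\{0\}$ is equivalent to the origin being the unique closed $H^o_\C$-orbit in $V'$, which in turn forces $\Sym((V')^*)^{H^o_\C}=\C$. By connectedness of $H^o$ this algebra coincides with $\Sym((V')^*)^{H^o}=\Sym((V')^*)^\hgot$, and the identification $\Wcal|_x\simeq V'$ yields $[\Sym(\Wcal|_x^*)]^\hgot=\C$ as required. I expect the only delicate point to be the identification $\Wcal|_x\simeq V'$ via the normal form; the remainder is the standard symplectic-to-GIT dictionary.
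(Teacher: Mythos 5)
Your argument is correct. Note that the paper itself gives no proof of Lemma \ref{lem:sym}: it is quoted from \cite{pep-vergne:witten}[Section 12.2], and the argument there is of the same nature as yours (local normal form, identification of the fibre of $\Wcal$ with the non-fixed part of the symplectic slice, and the Kempf--Ness dictionary), so you have essentially reconstructed the intended proof. Two small remarks on the write-up. First, the Marle--Guillemin--Sternberg model at $x\in Z_\hgot$ is $G\times_{G_x}(\hgot^\perp\oplus V_x)$ with the possibly disconnected stabilizer $G_x$, not $H^o$; with $H^o$ you only get a finite cover of a neighbourhood of $Gx$. This is harmless here because the lemma concerns $\hgot$-invariants only. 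Second, the identity $\T_xZ=\ggot/\hgot\oplus 0\oplus V_x^{H^o}$ that you assert in your first step is not a consequence of the normal form alone; it is precisely where the quasi-regularity hypothesis $Z\subset M_{(\hgot)}$ enters, through your second step, which shows $\Phi_{V_x}^{-1}(0)=V_x^{H^o}$ near the origin and hence that $Z$ is locally $G\times_{G_x}(\{0\}\times V_x^{H^o})$. So the two steps should be taken in the opposite order, but the logic does close. The remaining ingredients --- the splitting $\Phi_{V_x}=0\oplus\Phi_{V'}$ for the Hermitian-orthogonal (hence symplectically orthogonal and $H^o$-stable) complement $V'$, the homogeneity of $\Phi_{V'}$ upgrading the local statement to $\Phi_{V'}^{-1}(0)=\{0\}$, and the implication $\Phi_{V'}^{-1}(0)=\{0\}\Rightarrow\Sym((V')^*)^{H^o_\C}=\C$ via the affine Kempf--Ness theorem together with the fact that a reduced finitely generated graded algebra with one-point spectrum is $\C$ --- are all correct, and $\hgot$-, $H^o$- and $H^o_\C$-invariants coincide by connectedness.
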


Thanks to Lemma \ref{lem:sym}, we can introduce the following notion of reduction in the quasi-regular case.

\begin{defi}
If $\Fcal\to M$ is a $K$- equivariant complex vector bundle, we define on $M_{0}$ the (finite dimensional)  orbi-bundle
$$
\Fcal_{0}:=\left[\Fcal\vert_{Z_\hgot}\otimes \Sym(\Wcal^*\vert_{Z_\hgot})\right]^\hgot/(N/H^o).
$$
If $\hgot$ acts trivially on the fibres of  $\Fcal\vert_{Z_\hgot}$, the bundle $\Fcal_{0}$ is equal to
$\Fcal\vert_{Z_\hgot}/(N/H^o)$.
\end{defi}

The following result is proved in \cite{pep-vergne:witten}[Section 12.2].
\begin{theo}\label{theo: Qsemiregular}
Assume that $\Phi^{-1}(0)\subset M_{(\hgot)}$. For any $G$-equivariant complex vector bundle $\Fcal\to M$, we have
 $$
\left[\RR_G(M,\Fcal, \Phi^{-1}(0),\Phi)\right]^G=\RR(M_{0},\Fcal_{0}).
 $$
\end{theo}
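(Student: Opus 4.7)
The plan is to localize the Riemann--Roch character to a $G$-invariant tubular neighborhood of $Z := \Phi^{-1}(0)$, induce down to the normalizer $N$ of $\hgot$, and then carry out a fiberwise quantization in the directions normal to $Z_\hgot$, using Lemma \ref{lem:sym} to control the infinite-dimensional fiber.

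First I would apply the Marle--Guillemin--Sternberg symplectic slice theorem. Under the hypothesis $\Phi^{-1}(0)\subset M_{(\hgot)}$, a $G$-invariant open neighborhood $U$ of $Z$ in $M$ is $G$-equivariantly symplectomorphic to a neighborhood of the zero section in $G\times_N(\hgot^\perp\oplus \Wcal\vert_{Z_\hgot})$, with an induced $G$-moment map of the form $[g;\xi,w]\mapsto g\cdot(\xi+\Phi_{\Wcal}(w))$, where $\Phi_{\Wcal}$ is a fiberwise moment map on the symplectic bundle $\Wcal\vert_{Z_\hgot}\to Z_\hgot$ that vanishes along the zero section. By the excision property of Section \ref{subsec:trans-elliptic},
$$
\RR_G(M,\Fcal,Z,\Phi)=\RR_G(U,\Fcal\vert_U,Z,\Phi\vert_U).
$$

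Second, using the induction isomorphism for transversally elliptic symbols on $G\times_N Y$, taking $G$-invariants reduces to taking $N$-invariants:
$$
\left[\RR_G(U,\Fcal\vert_U,Z,\Phi)\right]^G=\left[\RR_N(Y,\Fcal\vert_Y,Z_\hgot,\Phi_N)\right]^N,
$$
where $Y$ is the corresponding $N$-invariant neighborhood of $Z_\hgot$ in $\hgot^\perp\oplus \Wcal\vert_{Z_\hgot}$ and $\Phi_N$ is the induced $N$-moment map. Since $H^o\subset N$ acts trivially on $Z_\hgot$ while $N/H^o$ acts locally freely, the computation becomes an $N$-equivariant problem on a neighborhood of a section on which the effective action is locally free.

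Third, I would homotope the Witten-deformed symbol on $Y$ through transversally elliptic symbols into a product of three pieces: a Dolbeault-type symbol along $Z_\hgot$, a Bott symbol along the fibers of $\Wcal\vert_{Z_\hgot}$ whose fiberwise equivariant index is formally $\Sym(\Wcal^*\vert_{Z_\hgot})$ as an $H^o$-module, and a Thom-type symbol along $\hgot^\perp$ accounting for the locally free quotient by $N/H^o$. Combining the multiplicativity of the transversally elliptic index with Lemma \ref{lem:sym}, only the $\hgot$-invariant subspace $[\Fcal\vert_{Z_\hgot}\otimes\Sym(\Wcal^*\vert_{Z_\hgot})]^\hgot$ survives when $H^o$-invariants are taken, and passing to the $N/H^o$-quotient produces exactly the orbi-bundle $\Fcal_0$ on $M_0=Z_\hgot/(N/H^o)$, yielding $\RR(M_0,\Fcal_0)$.

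The main obstacle is step three: one must carefully justify the multiplicativity of the transversally elliptic index in this non-free setting and verify that the formal character $\Sym(\Wcal^*\vert_{Z_\hgot})\in\hat R(H^o)$, when paired with $\Fcal\vert_{Z_\hgot}$ and projected onto $\hgot$-invariants via Lemma \ref{lem:sym}, assembles into a genuine finite-dimensional orbi-bundle on $M_0$. It is precisely this finiteness that makes the identification with $\RR(M_0,\Fcal_0)$ meaningful, and all the subtlety of the quasi-regular case sits here.
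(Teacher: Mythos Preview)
The paper does not prove this theorem: it simply states ``The following result is proved in \cite{pep-vergne:witten}[Section 12.2]'' and moves on. So there is no in-paper proof to compare against.

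That said, your sketch follows the standard architecture one expects in the cited reference: excise to a tubular neighborhood of $Z$ via the local normal form, reduce from $G$ to $N$ by the induction formula for transversally elliptic symbols, and then split the symbol on the $N$-side into a base piece along $Z_\hgot$, a Bott-type piece along the $\Wcal$-fibers contributing $\Sym(\Wcal^*\vert_{Z_\hgot})$, and a transverse piece along $\hgot^\perp$. You have also correctly isolated the genuine difficulty: the multiplicativity argument in step three is delicate because the fiberwise index is only a formal $H^o$-character, and one needs Lemma~\ref{lem:sym} to ensure that after projecting onto $\hgot$-invariants the result is an honest finite-rank orbi-bundle on $M_0$. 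One small point worth tightening: in your local model you write $G\times_N(\hgot^\perp\oplus \Wcal\vert_{Z_\hgot})$, but the slice around $Z_\hgot$ should be fibered over $Z_\hgot$ itself (the base is not a point), so the local model is rather a neighborhood of the zero section of the bundle $\hgot^\perp\times\Wcal\vert_{Z_\hgot}\to Z_\hgot$, then induced from $N$ to $G$. This does not affect the strategy but matters for writing the details cleanly.
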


A case of particular interest for us is when the reduced space $M_{0}:=\Phi^{-1}(0)/G$ is reduced to a point : we are in the quasi-regular case. Let $H$ be the stabilizer subgroup of $m_o\in Z:=\Phi^{-1}(0)$ (which is not necessarily connected). Then 
$Z=G\cdot m_o\simeq G/H$ is contained in $GM_\hgot$ where $\hgot$ is the Lie algebra of $H$. 

By definition the fiber of the complex vector bundle $\Wcal\to Z$ at $m_o$ is 
$\Wcal\vert_{m_o}=(\ggot\cdot m_o)^\perp/ \ggot\cdot m_o$. We have checked in the proof of 
Lemma \ref{lem:M-0-point} that the $H$-modules $\Wcal\vert_{m_o}$ coincides with 
$\mathbb{W}:=\T_{m_o}M/\ggot_\C\cdot m_o$. Recall that the hypothesis $Z=G\cdot m_o$ 
is equivalent to the fact that the complex $H$-module $\Sym(\mathbb{W}^*)$ has finite multiplicities.

In this case Theorem \ref{theo: Qsemiregular} gives 

\begin{coro}\label{coro:point}
Suppose that $\Phi^{-1}(0)=G\cdot m_o$ with $G_{m_o}=H$. For any $G$-equivariant complex vector bundle $\Fcal\to M$, we have
 $$
\left[\RR_G(M,\Fcal, \Phi^{-1}(0),\Phi)\right]^G=\left[\Sym(\mathbb{W}^*)\otimes \Fcal\vert_{m_o}\right]^H,
 $$
 where $\mathbb{W}:=\T_{m_o} M/\ggot_\C\cdot m_o$.
\end{coro}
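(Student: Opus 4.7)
The plan is to deduce the corollary directly from Theorem \ref{theo: Qsemiregular} by identifying $M_0$ and $\Fcal_0$ explicitly in the situation where $\Phi^{-1}(0)$ is a single orbit. First I would verify the quasi-regular hypothesis with $\hgot$ taken to be the Lie algebra of $H=G_{m_o}$: every point of $\Phi^{-1}(0)=G\cdot m_o$ has stabilizer Lie algebra conjugate to $\hgot$, hence $\Phi^{-1}(0)\subset GM_\hgot = M_{(\hgot)}$, so Theorem \ref{theo: Qsemiregular} applies and gives
$$
\left[\RR_G(M,\Fcal,\Phi^{-1}(0),\Phi)\right]^G=\RR(M_0,\Fcal_0).
$$

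Next I would compute the two pieces entering the definition of $\Fcal_0$. Let $N$ denote the normalizer of $\hgot$ in $G$. A point $g\cdot m_o$ belongs to $M_\hgot$ iff $\Ad(g)\hgot=\hgot$, iff $g\in N$, so $Z_\hgot=\Phi^{-1}(0)\cap M_\hgot=N\cdot m_o$. Since the stabilizer of $m_o$ inside $N$ is $H$, this realises $Z_\hgot$ as the homogeneous space $N/H$, and consequently
$$
M_0=Z_\hgot/(N/H^o)=(N/H)/(N/H^o)
$$
is a single point carrying the orbifold structure given by the finite component group $H/H^o$. For the bundle side, the proof of Lemma \ref{lem:M-0-point} already provides the canonical identification $\Wcal\vert_{m_o}\simeq \mathbb{W}=\T_{m_o}M/\ggot_\C\cdot m_o$, so
$$
\Fcal_0\vert_{m_o}=\left[\Fcal\vert_{m_o}\otimes \Sym(\mathbb{W}^*)\right]^\hgot.
$$

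Finally, because $M_0$ is a point orbifold with isotropy $H/H^o$, the Riemann-Roch character reduces to the ordinary dimension of the invariants:
$$
\RR(M_0,\Fcal_0)=\dim\bigl(\Fcal_0\vert_{m_o}\bigr)^{H/H^o}
=\dim\left[\Fcal\vert_{m_o}\otimes \Sym(\mathbb{W}^*)\right]^{H},
$$
since invariance under $\hgot$ together with invariance under the component group $H/H^o$ is the same as invariance under the compact (possibly disconnected) group $H$. Combining this with the identity provided by Theorem \ref{theo: Qsemiregular} yields the stated formula.

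The only subtle point — and the one I expect to be the main obstacle — is the disconnectedness of $H$: one must check carefully that the quotient $(N/H)/(N/H^o)$ is a single point with orbifold group exactly $H/H^o$, and that combining the $\hgot$-invariants appearing in the definition of $\Fcal_0$ with the remaining $H/H^o$-invariants recovers $H$-invariance. Everything else is a routine unpacking of the definitions in Section \ref{sec:QR=0}.
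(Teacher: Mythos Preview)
Your proof is correct and follows exactly the approach the paper takes: the paper derives the corollary from Theorem~\ref{theo: Qsemiregular} by noting that $\Phi^{-1}(0)=G\cdot m_o\subset GM_\hgot$ puts us in the quasi-regular case, and by invoking the identification $\Wcal\vert_{m_o}\simeq\mathbb{W}$ established in the proof of Lemma~\ref{lem:M-0-point}. Your write-up simply fills in the details (the description of $Z_\hgot$, the orbifold structure $H/H^o$ on $M_0$, and the passage from $\hgot$-invariants plus $H/H^o$-invariants to $H$-invariants) that the paper leaves implicit.
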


%%%%%%%%%%%%%%%%%%%%%%%%%%%%%%%%%%%%%%%%%%%%%%%%%%%%%%%%%%%%%%
%%%%%%%%%%%%%%%%%%%%%%%%%%%%%%%%%%%%%%%%%%%%%%%%%%%%%%%%%%%%%%
\subsection{Main proofs}\label{sec:proofs}
%%%%%%%%%%%%%%%%%%%%%%%%%%%%%%%%%%%%%%%%%%%%%%%%%%%%%%%%%%%%%%
%%%%%%%%%%%%%%%%%%%%%%%%%%%%%%%%%%%%%%%%%%%%%%%%%%%%%%%%%%%%%%

%%%%%%%%%%%%%%%%%%%%%%%%%%%%%%%%%%%%%%%%%%%%%%%%%%%%%%%%%%%%%%
\subsubsection{Proof of Theorem {\bf A}}\label{sec:proof-theorem-A}
%%%%%%%%%%%%%%%%%%%%%%%%%%%%%%%%%%%%%%%%%%%%%%%%%%%%%%%%%%%%%%

Consider a $G$-compact complex manifold $M$ endowed with an ample holomorphic $G$-line 
$\Lcal\to M$ with curvature the symplectic two-form $\Omega$. Let $\Phi:M\to\ggot^*$ be the moment map 
associated to the $G$-action on $\Lcal$ (see (\ref{eq:kostant-rel})). 

Let $\Ecal\to M$ be an holomorphic $G$-vector bundle. In this context, we are interested in the family of 
$G$-modules $\Gamma(M,\Ecal\otimes\Lcal^{\otimes n})$ formed by the holomorphic sections. We 
denote $\mathbf{H}_\Ecal(n)$ the dimension of $\Gamma(M,\Ecal\otimes\Lcal^{\otimes n})^G$. When 
we take $\Ecal=\C$, we denote $\mathbf{H}(n)=\dim \Gamma(M,\Lcal^{\otimes n})^G$.

By Kodaira vanishing theorem, we know that 
$$
\mathbf{H}_\Ecal(n)=[\RR_G(M,\Ecal\otimes\Lcal^{\otimes n})]^G
$$
when $n$ is sufficiently large. On the other hand Theorem \ref{th:QR=0} tell us that 
$[\RR_G(M,\Ecal\otimes\Lcal^{\otimes n})]^G$ is equal to 
$\left[\RR_{G}(M,\Ecal\otimes\Lcal^{\otimes n},\Phi^{-1}(0),\Phi)\right]^G$
for $n$ large enough. We know then that 
\begin{equation}\label{eq:H-E}
\mathbf{H}_\Ecal(n)=\left[\RR_{G}(M,\Ecal\otimes\Lcal^{\otimes n},\Phi^{-1}(0),\Phi)\right]^G
\end{equation}
when $n$ is sufficiently large. Two cases are considered in Theorem {\bf A}.

\medskip

$\bullet$ Suppose that $\mathbf{H}(n)=0$ for all $n\geq 1$. We have seen in Lemma \ref{lem-de-base-0} that it means that $\Phi^{-1}(0)=\emptyset$. In this case relation (\ref{eq:H-E}) shows that $\mathbf{H}_\Ecal(n)=0$ if $n$ is large enough.

\medskip

$\bullet$ Suppose that the sequence $\mathbf{H}(n)$ is non-zero and bounded: here we have that $\Phi^{-1}(0)=G\cdot m_o$ for some $m_o\in M$. Corollary \ref{coro:point} tell us that 
$$
\left[\RR_{G}(M,\Ecal\otimes\Lcal^{\otimes n},\Phi^{-1}(0),\Phi)\right]^G=
\left[\Sym(\mathbb{W}^*)\otimes \Ecal\vert_{m_o}\otimes\Lcal^{\otimes n}\vert_{m_o}\right]^H,
$$
where $H$ is the stabilizer subgroup of $m_o$, and $\mathbb{W}:=\T_{m_o} M/\ggot_\C\cdot m_o$.

The proof of Theorem  {\bf A} is then completed.

%%%%%%%%%%%%%%%%%%%%%%%%%%%%%%%%%%%%%%%%%%%%%%%%%%%%%%%%%%%%%%
\subsubsection{Proof of Theorem {\bf B}}\label{sec:proof-theorem-B}
%%%%%%%%%%%%%%%%%%%%%%%%%%%%%%%%%%%%%%%%%%%%%%%%%%%%%%%%%%%%%%

Here we use the notations of Section \ref{sec:stability-branching}. We fix a dominant weight 
$(\mu,\tilde{\mu})$ for the group $G\times \tilde{G}$, and we work with the $G$-manifold
$P=\tilde{G}\tilde{\mu}\times (G\mu)^-$, where $(G\mu)^-$ is the coadjoint orbit with the opposite symplectic  
and complex structure. The line bundle $\Lcal_P:=[\C_{\tilde{\mu}}]\boxtimes [\C_{-\mu}]$ prequantizes the symplectic form $\Omega_P:=\Omega_{\tilde{G}\tilde{\mu}}\times -\Omega_{G\mu}$. The moment map 
$\Phi_P: M\to \ggot^*$ is  defined by the relation $\Phi_P(\tilde{\xi},\xi)=\pi(\tilde{\xi})-\xi$.

The Borel-Weil Theorem says that the $G$-module $\Gamma(P,\Lcal_P^{\otimes n})$ corresponds to the tensor product 
$ (V^G_{n\mu})^*\otimes V^{\tilde{G}}_{n\tilde{\mu}}\vert_G$, hence  
$\mathbf{H}(n):=\dim [\Gamma(M,\Lcal^{\otimes n})]^G$ is equal to the multiplicity $\mm(n\mu,n\tilde{\mu})$. Here Lemma \ref{lem-de-base-0} tell us that the sequence $\mm(n\mu,n\tilde{\mu})$ is bounded if and only if the reduced space 
$(\tilde{G}\tilde{\mu})_\mu$ is empty or reduced to a singleton.

Now we want to investigate the behaviour of the sequence $\mm(\lambda+ n\mu,\tilde{\lambda}+n\tilde{\mu})$. 
On the coadjoint orbit $\tilde{G}\tilde{\mu}$ (resp. $G\mu$) we consider the complex vector bundle 
$\Ecal_{\tilde{\lambda}}:= \tilde{G}\times_{\tilde{G}_{\tilde{\mu}}}V_{\tilde{\lambda}}^{\tilde{G}_{\tilde{\mu}}}$ 
(resp. $\Ecal_\lambda:= G\times_{G_\mu}V_\lambda^{G_\mu}$). A direct computation gives that 
$V_{\tilde{\lambda}+ n\tilde{\mu}}^{\tilde{G}}=\RR_{\tilde{G}}(\tilde{G}\tilde{\mu}, \Ecal_{\tilde{\lambda}}
\otimes [\C_{\tilde{\mu}}]^{\otimes n})$ and $(V_{\lambda+ n\mu}^{G})^*=\RR_{G}((G\mu)^-, \Ecal_{\lambda}^*
\otimes [\C_-\mu]^{\otimes n})$, hence
$
\mm(\lambda+n\mu,\tilde{\lambda}+n\tilde{\mu})=
\left[\RR_G(M,\Ecal_{\tilde{\lambda}}\otimes \Ecal_\lambda^*\otimes \Lcal^{\otimes n})\right]^G$.

\medskip

If we use Theorem A, we obtain the following result:

$\bullet$ Suppose that $\mm(n\mu,n\tilde{\mu})=\mathbf{H}(n)=0$ for all $n\geq 1$. Then $\mm(\lambda+n\mu,\tilde{\lambda}+n\tilde{\mu})=\mathbf{H}_\Ecal(n)=0$ if $n$ is large enough.

\medskip

$\bullet$ Suppose that the sequence $\mm(n\mu,n\tilde{\mu})=\mathbf{H}(n)$ is non-zero and bounded: here we have that $\Phi^{-1}(0)=G(\xi_o,\mu)$ for some $\xi_o\in\tilde{G}\tilde{\mu}$. In this case we have 
$$
\mm(\lambda+n\mu,\tilde{\lambda}+n\tilde{\mu})=
\left[\Sym((\mathbb{W}^P)^*)\otimes \mathbb{E}_{\lambda,\tilde{\lambda}}\otimes 
\mathbb{D}^{\otimes n}\right]^H,
$$
if $n$ is large enough. Here $H$ is the stabilizer subgroup of $p=(\xi_o,\mu)$, $\mathbb{W}^P=\T_p P/\ggot_\C\cdot p$, $\mathbb{D}:=[\C_{\tilde{\mu}}]\vert_{\xi_o}\otimes (\C_{\mu})^*\vert_H$, and $\mathbb{E}_{\lambda,\tilde{\lambda}}:=
\Ecal_{\tilde{\lambda}}\vert_{\xi_o} \otimes (V^{G_\mu}_{\lambda})^*\vert_H$.

The proof of Theorem B is completed with the following 

\begin{lem}
The $H$-modules $\mathbb{W}^P$ is isomorphic to $\T_{\xi_o}\tilde{G}\tilde{\mu}/\rho(\pgot_{\mu})\cdot \xi_o$, 
where $\pgot_{\mu}$ is the parabolic sub-algebra of $\ggot_\C$ defined by 
$\pgot_{\mu}=\sum_{(\alpha,\mu)\geq 0}(\ggot_\C)_{\alpha}$.
\end{lem}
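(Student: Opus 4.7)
The plan is to write down the obvious $H$-equivariant candidate and identify its kernel. Using the splitting $\T_p P=\T_{\xi_o}\tilde{G}\tilde{\mu}\oplus \T_\mu^{1,0}(G\mu)^-$ and the fact that $G$ acts on $\tilde{G}\tilde{\mu}$ through $\rho$ and on $G\mu$ by the coadjoint action, one has
$$
\ggot_\C\cdot p=\bigl\{(\rho(X)\cdot\xi_o,\,X\cdot\mu):X\in\ggot_\C\bigr\},
$$
which suggests the $H$-equivariant linear map $\phi:\T_{\xi_o}\tilde{G}\tilde{\mu}\to\mathbb{W}^P$ defined by $v\mapsto[(v,0)]$.

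To check that $\phi$ is surjective, note that $G_\C$ acts transitively on $G\mu$, so the infinitesimal map $\ggot_\C\to\T_\mu^{1,0}(G\mu)^-$, $X\mapsto X\cdot\mu$, is onto; thus for any $(v,w)\in\T_pP$ one can choose $X\in\ggot_\C$ with $X\cdot\mu=w$ and get $[(v,w)]=\phi(v-\rho(X)\cdot\xi_o)$. For the kernel, the condition $(v,0)\in\ggot_\C\cdot p$ means $v=\rho(X)\xi_o$ for some $X$ satisfying $X\cdot\mu=0$, so $\ker\phi=\rho(\mathfrak K)\cdot\xi_o$, where $\mathfrak K:=\ker(\ggot_\C\to\T_\mu^{1,0}(G\mu)^-)$.

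The main obstacle, and the only substantive point, is to identify $\mathfrak K$ with $\pgot_\mu$. This follows from the standard description of coadjoint orbits: $(G\mu)^-$, with its opposite K\"ahler structure, is $G_\C$-equivariantly biholomorphic to a generalised flag variety $G_\C/P_\mu$, where $P_\mu$ is the parabolic subgroup with Lie algebra $\pgot_\mu$; the point $\mu$ corresponds to the base coset, and at that point the infinitesimal action on the holomorphic tangent space becomes the canonical projection $\ggot_\C\twoheadrightarrow\ggot_\C/\pgot_\mu$, whose kernel is exactly $\pgot_\mu$. The delicate bookkeeping consists in tracking the conventions fixed by the Kostant relations (\ref{eq:kostant-rel}) together with the choice of the opposite K\"ahler structure on $(G\mu)^-$, so as to see that the parabolic that appears is indeed $\pgot_\mu=\sum_{(\alpha,\mu)\geq 0}(\ggot_\C)_\alpha$ and not its opposite $\sum_{(\alpha,\mu)\leq 0}(\ggot_\C)_\alpha$. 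Putting everything together, $\phi$ descends to the required $H$-equivariant isomorphism $\T_{\xi_o}\tilde{G}\tilde{\mu}/\rho(\pgot_\mu)\cdot\xi_o\simeq\mathbb{W}^P$.
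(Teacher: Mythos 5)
Your proposal is correct and follows essentially the same route as the paper: the paper's proof likewise writes $\T_pP\simeq\T_{\xi_o}\tilde{G}\tilde{\mu}\times\ggot_\C/\pgot_\mu$ (the second factor being exactly your identification of $\T_\mu(G\mu)^-$ with $\ggot_\C/P_\mu$ at the base point of the flag variety) and then observes that $v\mapsto\overline{(v,0)}$ is surjective with kernel $\rho(\pgot_\mu)\cdot\xi_o$. You spell out the surjectivity and kernel computations, and flag the sign/convention issue for the parabolic, in more detail than the paper does.
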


\begin{proof} We have $\T_p P\simeq \T_{\xi_o}\tilde{G}\tilde{\mu}\times \ggot_\C/\pgot_{\mu}$. Hence the map 
$\T_{\xi_o}\tilde{G}\to \T_p P/\ggot_\C\cdot p, v\mapsto \overline{(v,0)}$ is surjective with kernel equal to 
$\rho(\pgot_{\mu})\cdot \xi_o$.
\end{proof}

%%%%%%%%%%%%%%%%%%%%%%%%%%%%%%%%%%%%%%%%%%%%%%%%%%%%%%%%%%%%%%
\subsubsection{Proof of Theorem {\bf C}}\label{sec:proof-theorem-C}
%%%%%%%%%%%%%%%%%%%%%%%%%%%%%%%%%%%%%%%%%%%%%%%%%%%%%%%%%%%%%%

Here $K$ is a closed subgroup of $G$, and we use a $K$-invariant decomposition~: $\ggot=\kgot\oplus\qgot$. Let 
$V$ be a $K$-Hermitian vector space such that the $K$-module $\Sym(V^*)$ has finite multiplicities. The proof of Theorem \ref{theo:principal} is an adaptation of the previous arguments to the case where we work with the non-compact manifold $M:=G\times_K(\qgot^*\oplus V)$.

The symplectic structure on $M$ is defined as follows. Let $\theta\in \Acal^1(G)\otimes \ggot$ the canonical 
connection relatively to right translation~: $\theta(\frac{d}{dt}\vert_{t=0} ge^{tX})= X$. Let $\Omega_V$ be 
the symplectic structure on $V$ which is $-1$ times the imaginary part of the hermitian structure of $V$. Let $\lambda_V$ the invariant $1$-form on 
$V$ defined by $\lambda_V(v)=\frac{1}{2}\Omega_V(v,-)$ : we have $\Omega_V=d\lambda_V$. The moment map $\Phi_V:V\to \kgot^*$  
associated to the $K$-action on $(V,\Omega_V)$ is defined by $\langle\Phi_V(v),X\rangle=\frac{1}{2}\Omega_V(Xv,v)$. Recall that our hypothesis 
``the $K$-module $\Sym(V^*)$ has finite multiplicities'' implies that the map $\Phi_V$ is proper: one has a relation of the form $\|\Phi_V(v)\|\geq c\|v\|^2$ for some $c>0$.

We consider the $1$-form $\lambda:=
\lambda_V- \langle \xi\oplus\Phi_V,\theta\rangle$ on $G\times(\qgot^*\oplus V)$, which is 
$G \times K$-equivariant and $K$-basic. It induces a $1$-form $\lambda_M$ on $M$.

We have the standard fact.
\begin{prop}

$\bullet$ The $2$-form $\Omega_M:=d\lambda_M$ defines a $G$-invariant symplectic form on $M$. 
The corresponding moment map is
$\Phi([g;\xi\oplus v])= g(\xi\oplus\Phi_V(v))$.

$\bullet$ The moment map $\Phi$ is proper and $Z_{\Phi}\simeq G/K$.

$\bullet$ The trivial line bundle $\C$ on $M$ prequantizes the $2$-form $\Omega_M$.
\end{prop}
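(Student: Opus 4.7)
The plan is to treat the three bullets in sequence; the underlying geometric picture is that $(M,\Omega_M)$ is (isomorphic to) the symplectic reduction of $T^*G \times V$ by the diagonal $K$-action.

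First I would check that $\lambda = \lambda_V - \langle \xi\oplus\Phi_V, \theta\rangle$ is $K$-basic on $G \times (\qgot^*\oplus V)$, so it descends to $\lambda_M$ on $M$. Invariance under the right $K$-action is immediate from $K$-equivariance of $\lambda_V$, $\theta$, and $\Phi_V$. For horizontality, evaluate on the fundamental vector field $Y^\sharp = (Y_g^L, -Y\xi, -Yv)$ of $Y \in \kgot$: the $V$-contribution $\lambda_V(-Yv) = \frac{1}{2}\Omega_V(Yv,v) = \langle \Phi_V(v), Y\rangle$ (from the defining relation of $\Phi_V$) cancels the $G$-contribution $\langle \xi\oplus \Phi_V(v), \theta(Y_g^L)\rangle = \langle \Phi_V(v), Y\rangle$ (using $\xi \in \qgot^* = \kgot^\perp$). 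Non-degeneracy of $\Omega_M = d\lambda_M$ is then a local check at $[e;\xi\oplus v]$, where $\Omega_M$ splits into the canonical cotangent-bundle form on $G\times\qgot^*$ and $\Omega_V$ on $V$.

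Next, to establish both the moment map formula and prequantization simultaneously, I would verify $\iota(X_M)\lambda_M = \langle \Phi(m), X\rangle$ directly: with the sign convention $X_M(m) = \frac{d}{ds}\vert_{s=0} e^{-sX}m$, the lift of $X_M$ to $G\times(\qgot^*\oplus V)$ is $(-X^R_g, 0, 0)$, and the Maurer-Cartan identity $\theta(X^R_g) = \Ad(g^{-1})X$ yields $\iota(X_M)\lambda = \langle \xi\oplus\Phi_V(v), \Ad(g^{-1})X\rangle = \langle g(\xi\oplus\Phi_V(v)), X\rangle$. Combined with $G$-invariance of $\lambda_M$ and Cartan's formula, this gives $\iota(X_M)\Omega_M = -d\langle \Phi, X\rangle$, so $\Phi$ is a moment map. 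For the third bullet, the trivial line bundle $M \times \C$ with connection $\nabla = d - i\lambda_M$ is unitary with curvature $i(\nabla^h)^2 = \Omega_M$; Kostant's relation $L(X) - \iota(X_M)\nabla = i\langle\Phi, X\rangle$ then reduces to the identity $\iota(X_M)\lambda_M = \langle\Phi, X\rangle$ already established.

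Finally, properness of $\Phi$ follows from $\|\Phi(m)\|^2 = \|\xi\|^2 + \|\Phi_V(v)\|^2$ (by $G$-invariance of the norm on $\ggot^*$), the stated estimate $\|\Phi_V(v)\|\geq c\|v\|^2$, and compactness of $G/K$. To compute $Z_\Phi$, I would write $\widetilde{\Phi(m)} = \Ad(g)(\tilde \xi + \widetilde{\Phi_V(v)})$ with components along $\qgot \oplus \kgot$, and decompose its fundamental vector field at $[g;\xi\oplus v]$ into horizontal and vertical parts relative to the principal $K$-bundle $G\times(\qgot^*\oplus V) \to M$; vanishing forces $\xi = 0$ (horizontal part in $\qgot$) and $\widetilde{\Phi_V(v)}\cdot v = 0$ (vertical part on $V$). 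Pairing the latter with $\Phi_V(v)$ and using skew-adjointness of the $\kgot$-action on $V$ yields $\|\Phi_V(v)\|^2 = 0$, whence $v=0$ by the estimate above. Therefore $Z_\Phi = \{[g; 0\oplus 0]:g\in G\} \simeq G/K$. The main bookkeeping obstacle throughout is the horizontal/vertical decomposition in the principal $K$-bundle; once that is in place every step is elementary.
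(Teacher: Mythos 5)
The paper states this proposition without proof (``the standard fact''), so there is nothing to compare against except the standard argument it implicitly invokes; your proposal follows that argument and is essentially sound. The basicness computation for $\lambda$, the single identity $\iota(X_M)\lambda_M=\langle\Phi,X\rangle$ (which, together with $G$-invariance of $\lambda_M$ and Cartan's formula, simultaneously yields the moment map formula and the Kostant relation for $\nabla=d-i\lambda_M$, hence the third bullet), the properness estimate via $\|\Phi_V(v)\|\geq c\|v\|^2$, and the identification $Z_\Phi\simeq G/K$ are all correct as written.

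The one step whose justification fails as stated is the non-degeneracy of $\Omega_M$. At a point $[e;\xi\oplus v]$ with $v\neq 0$ the form $d\lambda$ does \emph{not} split into the canonical form on $G\times\qgot^*$ plus $\Omega_V$: differentiating $\langle\Phi_V(v),\theta\rangle$ produces the cross term $\langle d\Phi_V\wedge\theta\rangle$ coupling the $V$-directions to the $\ggot$-directions, and an antisymmetric block matrix with non-degenerate diagonal blocks and nonzero off-diagonal blocks can perfectly well be singular, so the splitting argument proves nothing. The repair is exactly the reduction picture you announce in your opening sentence but then do not use at this step: the embedding $(g,\xi,v)\mapsto(g,\xi+\Phi_V(v),v)$ identifies $G\times(\qgot^*\oplus V)$ with the zero level of the $K$-moment map on $T^*G\times V$, under which $\lambda$ is the pullback of the canonical primitive $\lambda_{T^*G}+\lambda_V$; since the $K$-action on that level set is free, the Marsden--Weinstein argument shows that the kernel of the restricted $2$-form is precisely the tangent space to the $K$-orbits, so the induced form $d\lambda_M$ on the quotient $M$ is non-degenerate. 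With that substitution the proof is complete.
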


We equip $M$ with an invariant almost complex structure compatible with $\Omega_M$. Since the critical 
set $Z_{\Phi}$ is compact, one can define the the localized Riemann-Roch character  $\RR_G(M,-,\Phi)$.  
The following result is proved in \cite{pep-fomal2}[Section 2.3].

\begin{prop}\label{prop-RR-Phi}
We gave
$$
\RR_G(M,\C,\Phi)=\mathrm{Ind}_K^G\left(\Sym(V^*)\right)=\sum_{\mu\in\hat{G}} \mm(\mu) V^G_\mu,
$$
with $\mm(\mu)=\dim \left[ \Sym(V^*)\otimes (V^G_\mu)^*\vert_K\right]^K$.
\end{prop}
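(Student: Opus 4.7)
The plan is to recognize $M$ as a symplectically induced $G$-Hamiltonian manifold and to reduce the computation of $\RR_G(M,\C,\Phi)$ to a $K$-equivariant index on the slice $N := \qgot^*\oplus V$, via an induction formula.

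First I would verify that the localization hypotheses apply. Since $\Sym(V^*)$ has finite $K$-multiplicities, the moment map $\Phi_V$ is proper and $\Phi_V^{-1}(0)=\{0\}$ (as in the second point of Lemma \ref{lem:M-0-point}). Hence $\Phi^{-1}(0) = G/K$, which coincides with the critical set $Z_\Phi$ already identified in the preceding proposition. The localized character $\RR_G(M,\C,\Phi)$ is therefore well-defined and, in the spirit of Theorem \ref{th:QR=0}, equals $\RR_G(M,\C,\Phi^{-1}(0),\Phi)$.

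Second I would invoke an induction formula. The free right $K$-action on $G\times N$ realizes $M\to G/K$ as a $G$-equivariant fiber bundle with fiber $N$, and Atiyah's free-action theorem for transversally elliptic symbols produces an induction morphism $\ind_K^G : \Rfor(K)\to\Rfor(G)$. By inspecting the Kirwan vector field $\kappa_\Phi$ in a tubular neighborhood of $Z_\Phi = G/K$, one checks that $\kappa_\Phi$ differs from the natural lift of the $K$-Kirwan field $\kappa_{\Phi_N}$ of the $K$-action on $N$ only by a vector field tangent to the $G$-orbits, which is irrelevant after restriction of the symbol to $\T^*_G M$. This yields
$$
\RR_G(M,\C,\Phi)\;=\;\ind_K^G\!\left(\RR_K(N,\C,\Phi_N)\right),
$$
where $\Phi_N(\xi,v)=\Phi_V(v)$ is the $K$-moment map on $N$.

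Third I would compute $\RR_K(N,\C,\Phi_N)=\Sym(V^*)$ in $\Rfor(K)$. The $\qgot^*$-summand contributes a Thom-type factor that, because its own Kirwan vector field vanishes, reduces to the trivial class after Witten localization at the origin. What remains is the Dolbeault-Dirac index on the Hermitian $K$-module $V$ with Kirwan field $\kappa_{\Phi_V}$ (a positive multiple of the radial vector field), for which the standard computation on a Hermitian module with finite $K$-multiplicities yields the formal character $\Sym(V^*)$. Frobenius reciprocity then rewrites $\ind_K^G(\Sym(V^*))$ as $\sum_\mu \mm(\mu) V^G_\mu$ with the stated multiplicities. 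The main obstacle is the induction formula in the second step: one must justify that Witten's deformation commutes with symplectic induction, which requires a careful decomposition of the deformed symbol on a tubular neighborhood of $G/K$ together with a version of Atiyah's free-action theorem adapted to the transversally elliptic setting.
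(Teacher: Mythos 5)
The paper offers no proof of this proposition --- it is quoted from \cite{pep-fomal2}[Section 2.3] --- and your sketch follows exactly the argument of that reference: the induction isomorphism $\K_G^0(\T_G^*(G\times_K N))\simeq \K_K^0(\T_K^*N)$ compatible with the index maps via $\mathrm{Ind}_K^G$, followed by the computation of the index of the pushed Dolbeault symbol on the Hermitian $K$-module $V$, which yields $\Sym(V^*)$. The strategy is correct; the only point to formulate with care is that your $\RR_K(N,\C,\Phi_N)$ must be read as the index of the restriction to $\T_K^*N$ of the deformed symbol of $M$ (which carries the extra factor $\wedge^\bullet\qgot_\C$ pairing the $\qgot^*$-directions with $\ggot/\kgot$, whose Bott class has index one), since the Kirwan field of $\Phi_N(\xi,v)=\Phi_V(v)$ on $N$ itself has the noncompact zero set $\qgot^*\times\{0\}$ and $N$ carries no natural complex structure on the $\qgot^*$-factor alone.
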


In order to compute geometrically $\mm(\mu)$ we have to adapt the shifting trick to this non-compact setting. 
Let us fix two dominant weight $\mu$ and $\lambda$. Like in the previous section we work with the 
$G$-manifold $P=M\times (G\mu)^-$, that is equipped with 

$\bullet$ the symplectic form  $\Omega_P:=\Omega_M\times -\Omega_{G\mu}$,

$\bullet$ the line bundle $\Lcal_P:=\C\boxtimes [\C_{\mu}]^{-1}$ that prequantizes $\Omega_P$,

$\bullet$ the moment map $\Phi_P: P\to \ggot^*$ that is  defined by the relation $\Phi_P(m,\xi)=\Phi_M(m)-\xi$,

$\bullet$ the vector bundle $\Ecal_\lambda:= \C\boxtimes G\times_{G_\mu}V_\lambda^{G_\mu}$.

For any $R\geq 0$, we define $M_{\leq R}$ as the compact subset of points 
$[g;\xi\oplus v]$ such that $\|\xi\|\leq R$ and $\| v\|\leq R$. We start with the following basic fact whose 
proof is left to the reader.

\begin{lem}\label{lem:crit-Z-Phi}
There exists $c>0$, such that for any $\mu$ the critical set $Z_{\Phi_P}\subset P=M\times G\mu$ is 
contained in the compact set 
$M_{\leq c\|\mu\|}\times G\mu$.
\end{lem}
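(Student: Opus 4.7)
Here is the plan I would follow.

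\medskip

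\noindent\textbf{Step 1: Translate the critical-point condition.} A point $p=(m,\xi)\in P=M\times G\mu$ lies in $Z_{\Phi_P}$ iff $\kappa_{\Phi_P}(p)=0$, i.e.\ the vector field on $P$ generated by $\beta:=\widetilde{\Phi_P(m,\xi)}=\widetilde{\Phi_M(m)-\xi}\in\ggot$ vanishes at $p$. This is equivalent to $\beta\cdot m=0$ in $\T_mM$ \emph{and} $[\beta,\xi]=0$, i.e.\ $\beta\in\ggot_\xi$. By $G$-equivariance of $\Phi_P$ and of the bound $\|\cdot\|_M$, I may replace $(m,\xi)$ by a translate and assume $m=[e;\xi_0\oplus v]$ with $\xi_0\in\qgot^*$, $v\in V$; the bound I want will then hold for the whole $G$-orbit.

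\medskip

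\noindent\textbf{Step 2: Unpack the infinitesimal action on $M=G\times_K(\qgot^*\oplus V)$.} Writing $\beta=\beta_\kgot+\beta_\qgot$ with respect to the $K$-invariant splitting $\ggot=\kgot\oplus\qgot$, a short computation in the slice model shows that the action on $[e;\xi_0\oplus v]$ decomposes as
\[
\beta\cdot[e;\xi_0\oplus v]=\bigl(\beta_\qgot,\,\beta_\kgot\cdot\xi_0,\,\beta_\kgot\cdot v\bigr)\in \qgot\oplus\qgot^*\oplus V.
\]
Hence $\beta\cdot m=0$ forces $\beta\in\kgot$ and $\beta_\kgot\cdot\xi_0=0$, $\beta_\kgot\cdot v=0$. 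Identifying $\ggot\simeq\ggot^*$ and using $\Phi_M(m)=\xi_0+\Phi_V(v)\in\qgot^*\oplus\kgot^*$, the condition $\beta\in\kgot$ rewrites as $\xi_0=\pi_{\qgot^*}(\xi)$, so immediately
\[
\|\xi_0\|\;\le\;\|\xi\|\;=\;\|\mu\|.
\]
Moreover $\beta=\Phi_V(v)-\pi_{\kgot^*}(\xi)\in\kgot^*$, and the remaining condition is $\beta\cdot v=0$.

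\medskip

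\noindent\textbf{Step 3: Bound $\|v\|$ via the moment-map identity.} Pairing $\beta\cdot v=0$ with $v$ in the Hermitian inner product and using the defining identity $\langle \Phi_V(v),Y\rangle=\frac{1}{i}(v,Yv)$ yields
\[
\|\Phi_V(v)\|^2=\langle\Phi_V(v),\pi_{\kgot^*}(\xi)\rangle\;\le\;\|\Phi_V(v)\|\,\|\mu\|,
\]
hence $\|\Phi_V(v)\|\le \|\mu\|$. Combining with the properness estimate $\|\Phi_V(v)\|\ge c_0\|v\|^2$ (recalled just before the lemma), we get $\|v\|\le (\|\mu\|/c_0)^{1/2}$.

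\medskip

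\noindent\textbf{Step 4: Conclusion.} Collecting the bounds on $\|\xi_0\|$ and $\|v\|$, the critical set is contained in $M_{\le R(\mu)}\times G\mu$ with $R(\mu)=\max(\|\mu\|,(\|\mu\|/c_0)^{1/2})$. Since dominant weights are bounded below away from $0$ (and for $\mu=0$ the critical set is $Z_\Phi\simeq G/K\subset M_{\le 0}$), this can be absorbed into a single linear bound $R(\mu)\le c\|\mu\|$ for a suitable $c>0$, giving the lemma.

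\medskip

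\noindent\textbf{Main obstacle.} The technical core is Step 2: writing down the infinitesimal $G$-action in the slice model $G\times_K(\qgot^*\oplus V)$ carefully enough to extract both the splitting $\beta\in\kgot$ and the constraint $\xi_0=\pi_{\qgot^*}(\xi)$. Everything else is a quadratic form manipulation together with the properness bound for $\Phi_V$.
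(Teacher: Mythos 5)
Your argument is correct, and it fills a genuine gap: the paper explicitly leaves the proof of this lemma to the reader, so there is no authorial proof to compare against. Your route is the natural one — reduce to a point $[e;\xi_0\oplus v]$ in the slice, split the vanishing of the Kirwan vector field into $\beta\in\kgot$ (forcing $\xi_0=\pi_{\qgot^*}(\xi)$, hence $\|\xi_0\|\le\|\mu\|$) and $\beta\cdot v=0$ (which, paired with $v$ via the moment map identity, gives $\|\Phi_V(v)\|^2=(\Phi_V(v),\pi_{\kgot^*}(\xi))\le\|\Phi_V(v)\|\,\|\mu\|$), then invoke the quadratic lower bound $\|\Phi_V(v)\|\ge c_0\|v\|^2$ recalled just before the lemma. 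One remark worth keeping: as you noticed, the raw estimate on $\|v\|$ is of order $\|\mu\|^{1/2}$, so the stated linear bound $c\|\mu\|$ is only valid because $\mu$ ranges over the weight lattice (nonzero dominant weights have norm bounded below), with the case $\mu=0$ handled by $Z_{\Phi_M}\simeq G/K\subset M_{\le 0}$; for arbitrary real $\mu$ one would have to replace $c\|\mu\|$ by $c\max(\|\mu\|,\|\mu\|^{1/2})$. Since the lemma is only applied to dominant weights, your resolution is adequate. It would also be worth observing (for the application to the homotopy $\phi^t$ in the following lemma) that the same computation with $\xi$ replaced by $t\xi$ yields the same bound uniformly in $t\in[0,1]$.
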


Since $Z_{\Phi_P}$ is compact we can consider the localized Riemann-Roch character $\RR_G(P,-,\Phi_P)$. Here the map 
$\Phi_P$ is also proper, hence we can consider the Riemann-Roch character $\RR_G(P,-,\Phi_P^{-1}(0),\Phi_P)$ localized 
on the compact component $\Phi_P^{-1}(0)$.  

\begin{lem}We have 
$$
\mm^\Lcal(\lambda+ n\mu,n)=\left[\RR_{G}(P,\Ecal_\lambda^* \otimes \Lcal_P^{\otimes n},\Phi_P^{-1}(0),\Phi_P)\right]^G
$$
for $n$ large enough.
\end{lem}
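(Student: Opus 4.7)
The plan is to use a shifting-trick computation to express $\mm(\lambda+n\mu)$ as a localized Riemann--Roch character on the non-compact product $P$, and then invoke Theorem \ref{th:QR=0} to confine the computation to the compact component $\Phi_P^{-1}(0)$ of the critical set.

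First, Proposition \ref{prop-RR-Phi} identifies the multiplicity of $V^G_\nu$ in the virtual character $\RR_G(M,\C,\Phi)\in\hat R(G)$ with $\mm(\nu)$, so that
\[
 \mm(\lambda+n\mu) = \left[\RR_G(M,\C,\Phi)\otimes (V^G_{\lambda+n\mu})^*\right]^G.
\]
Applying Borel--Weil to the compact coadjoint orbit $(G\mu)^-$, one realizes $(V^G_{\lambda+n\mu})^*$ as the equivariant index $\RR_G((G\mu)^-,\Ecal_\lambda^*\otimes [\C_{-\mu}]^{\otimes n})$, and the identity above rewrites as
\[
 \mm(\lambda+n\mu) = \left[\RR_G(M,\C,\Phi)\otimes \RR_G((G\mu)^-,\Ecal_\lambda^*\otimes [\C_{-\mu}]^{\otimes n})\right]^G.
\]

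Next I will combine the two factors into a single Riemann--Roch character on $P$. The external product of the transversally elliptic symbol on $M$ defining $\RR_G(M,\C,\Phi)$ (namely the Witten-deformed Thom symbol $\clif_\Phi$) with the elliptic Thom symbol on the compact manifold $(G\mu)^-$ is a $G$-transversally elliptic symbol on $P$, and by Atiyah's multiplicativity theorem its equivariant index is the tensor product of the two indices. Twisting by the bundle $\C\boxtimes(\Ecal_\lambda^*\otimes [\C_{-\mu}]^{\otimes n})$, which coincides with $\Ecal_\lambda^*\otimes \Lcal_P^{\otimes n}$ on $P$, and using the linear homotopy from the box-sum $(\kappa_{\Phi_M},\kappa_{-\Phi_{G\mu}})$ to the single Kirwan vector field $\kappa_{\Phi_P}$ (whose intermediate zero sets remain in the compact region of Lemma \ref{lem:crit-Z-Phi} and hence leave the class in $\K^0_G(\T^*_G P)$ unchanged), one obtains
\[
 \mm(\lambda+n\mu) = \left[\RR_G(P,\Ecal_\lambda^*\otimes \Lcal_P^{\otimes n},\Phi_P)\right]^G.
\]

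Finally, since $\Phi_P$ is proper with compact critical set, Theorem \ref{th:QR=0} applied to $(P,\Phi_P)$ with the prequantum bundle $\Lcal_P$ and the auxiliary bundle $\Ecal_\lambda^*$ yields
\[
 \left[\RR_G(P,\Ecal_\lambda^*\otimes \Lcal_P^{\otimes n},\Phi_P)\right]^G = \left[\RR_G(P,\Ecal_\lambda^*\otimes \Lcal_P^{\otimes n},\Phi_P^{-1}(0),\Phi_P)\right]^G
\]
for $n$ large enough, which is the desired identity. The hard part is the middle step: the identification of the product of the two indices with the single Witten-deformed Riemann--Roch character on $P$ attached to $\Phi_P$. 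This rests on Atiyah's external product formula for transversally elliptic operators combined with a K-theoretic homotopy rendered admissible by the properness of $\Phi_M$ and the compactness of the coadjoint factor.
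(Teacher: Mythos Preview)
Your argument is correct and follows essentially the same route as the paper: express $\mm(\lambda+n\mu)$ via Proposition~\ref{prop-RR-Phi} and Borel--Weil as a product of indices, identify this product with $\RR_G(P,\Ecal_\lambda^*\otimes\Lcal_P^{\otimes n},\Phi_P)$ through a homotopy of transversally elliptic symbols with uniformly compact critical loci, and then invoke Theorem~\ref{th:QR=0}. The only cosmetic difference is the homotopy: the paper uses the explicit family $\phi^t(m,\xi)=\Phi_M(m)-t\xi$, so that at $t=0$ the pushed symbol on $P$ is directly the external product of $\clif_{\Phi_M}$ with the \emph{undeformed} Thom symbol on $(G\mu)^-$, making the multiplicativity step immediate; your homotopy starts instead from the box-sum of the two Kirwan vector fields, which amounts to the same thing since the compact factor contributes the same $K$-theory class either way.
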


\begin{proof} We consider the family of equivariant maps $\phi^t: P\to \ggot^*, t\in [0,1]$ defined by the relation $\phi^t(m,\xi)=\Phi_M(m)-t\xi$. 
Let $\kappa^t$ be the Kirwan vector field attached to $\phi^t$, and let $Z_{\phi^t}$ be the vanishing set of $\kappa^t$: thanks to Lemma 
\ref{lem:crit-Z-Phi} we know that $Z_{\phi^t}$ is a compact subset included in $M_{c\|\mu\|}\times G\mu$ for any $t\in [0,1]$.

We know then that the family of pushed symbols $\clif_{\phi^t}$ is an homotopy of transversally elliptic symbols on $P$. We get then that 
\begin{eqnarray*}
\RR_{G}(P,\Ecal_\lambda^*\otimes\Lcal_P^{\otimes n},\Phi_P)&=&\RR_{G}(P,\Ecal_\lambda^*\otimes\Lcal_P^{\otimes n},\phi^0)\\
&=&\RR_G(M,\C,\Phi_M)\otimes \RR_G((G\mu)^-,\Ecal_\lambda^*\otimes [\C_{-n\mu}])\\
&=&\RR_G(M,\C,\Phi_M)\otimes (V_{\lambda+n\mu}^G)^*.
\end{eqnarray*}
At this stage we have proved that 
\begin{equation}\label{eq:1}
\mm(\lambda+ n\mu)=\left[\RR_{G}(P,\Ecal_\lambda^*\otimes\Lcal_P^{\otimes n},\Phi_P)\right]^G
\end{equation}
for any $n\geq 0$. Since Theorem \ref{th:QR=0} tells us that the right hand side of (\ref{eq:1}) is equal to 
$\left[\RR_{G}(P,\Ecal_\lambda^*\otimes\Lcal_P^{\otimes n},\Phi_P^{-1}(0),\Phi_P)\right]^G$ for large enough $n$, 
the proof of our Lemma is completed.
\end{proof}

\medskip

Like in the previous section, the term 
$$
Q_{\lambda,\mu}(n):=\left[\RR_{G}(P,\Ecal_\lambda^*\otimes\Lcal_P^{\otimes n},\Phi_P^{-1}(0),\Phi_P)\right]^G
$$
can be computed explicitly when the reduced space $\Phi_P^{-1}(0)/G$ is empty or a point:

$\bullet$ If $\Phi_P^{-1}(0)=\emptyset$, we have $Q_{\lambda,\mu}(n)=0$ for any $n\geq 0$.

$\bullet$ If $\Phi_P^{-1}(0)= G\cdot(m_o,\mu)$ for some $m_o\in M$, we have 
$$
Q_{\lambda,\mu}(n)=\left[\Sym(\mathbb{W}^*)\otimes \C_{-n\mu}\otimes (V_\lambda^{G_\mu})^*\vert_H\right]^H, \quad n\geq 0
$$
where $H$ is the stabilizer subgroup of $m_o$, and $\mathbb{W}=\T_{m_o}M/ \pgot_\mu\cdot m_o$.

\medskip

We have proved that :

$\bullet$ if $M_\mu=\emptyset$, we have $\mm(\lambda+ n\mu)=0$ if $n$ is large enough, for any dominant 
weight $\lambda$,

$\bullet$ if $M_\mu=\{pt\}$, we have $\mm(\lambda+ n\mu)=
[\Sym(\mathbb{W}^*)\otimes \C_{-n\mu}\otimes (V_\lambda^{G_\mu})^*\vert_H]^H$ 
if $n$ is large enough, for any dominant weight $\lambda$.

\medskip

The last thing that we need to prove is the following

\begin{prop}
$\bullet\quad \mm(n\mu)=0, n\geq 1 \ \Longleftrightarrow \ M_\mu=\emptyset$,

$\bullet\quad \mm(n\mu)$ is non-zero and bounded $\Longleftrightarrow \ M_\mu=\{pt\}$.
\end{prop}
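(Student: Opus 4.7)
The plan is to imitate the shifting-trick derivation of Lemma \ref{lem-de-base} from Lemma \ref{lem-de-base-0}. I would work on the $G$-manifold $P := M\times (G\mu)^-$ equipped with the line bundle $\Lcal_P$ and moment map $\Phi_P$ introduced in the proof of Theorem {\bf C}, noting that $\Phi_P^{-1}(0)/G = M_\mu$ by construction. The argument yielding (\ref{eq:1}) together with the pure line bundle case of Theorem \ref{th:QR=0} --- which applies for every $n\geq 1$ since $\Phi_P$ is proper and $Z_{\Phi_P}$ is compact by Lemma \ref{lem:crit-Z-Phi} --- provides
$$
\mm(n\mu) \;=\; \bigl[\RR_G(P,\Lcal_P^{\otimes n},\Phi_P^{-1}(0),\Phi_P)\bigr]^G \qquad \text{for every } n\geq 1.
$$

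The two forward implications follow directly from this formula. If $M_\mu = \emptyset$, the localized character is zero by definition, so $\mm(n\mu)=0$ for all $n\geq 1$. If $M_\mu=\{pt\}=G\cdot p$ with stabilizer $H$, Corollary \ref{coro:point} gives
$$
\mm(n\mu) \;=\; \bigl[\Sym(\mathbb{W}^*)\otimes \C_{-n\mu}\vert_H\bigr]^H,
$$
with $\mathbb{W}=\T_p P/\ggot_\C\cdot p$, and Lemma \ref{lem:M-0-point} ensures that $\Sym(\mathbb{W}^*)$ has finite $H$-multiplicities. Since $H^o$ acts trivially on $\C_\mu$, the characters $\C_{-n\mu}\vert_H$ form a periodic sequence in the finite character group of $H/H^o$, so $\mm(n\mu)$ is bounded; picking $n_0$ for which $\C_{-n_0\mu}\vert_H$ is trivial, the constants in $\Sym(\mathbb{W}^*)$ yield $\mm(n_0\mu)\geq 1$.

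For the converse implications I would apply Lemma \ref{lem-de-base-0} to $P$ in the non-compact K\"{a}hler setting of Section \ref{sec:Kahler}. Setting $\mathbf{H}^P(n):=\dim\Gamma(P,\Lcal_P^{\otimes n})^G$, the identification $\mathbf{H}^P(nq)=\mm(nq\mu)$ for large $n$ follows by combining Theorem \ref{QR=0:GSS}, Kodaira vanishing on the compact projective variety $P_0=M_\mu$, and the reduced-space form of $[Q,R]=0$ (where $q$ is as in Theorem \ref{theo:L_0-positive}). By Proposition \ref{prop:HS}, $\mathbf{H}^P(nq)$ is eventually a polynomial in $n$ of degree $\dim_\C M_\mu$ when $M_\mu\neq\emptyset$, and eventually zero otherwise. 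Hence $\mm(n\mu)=0$ for all $n\geq 1$ forces $\mathbf{H}^P(nq)=0$ for large $n$, i.e.\ $M_\mu=\emptyset$; and if $\mm(n\mu)$ is bounded then $\mathbf{H}^P(nq)$ is bounded for large $n$, so $\dim_\C M_\mu\leq 0$, and non-vanishing combined with the forward direction of the first equivalence rules out $\emptyset$, leaving $M_\mu=\{pt\}$. The main technical hurdle is verifying that $P$ actually fits the non-compact K\"{a}hler setup of Section \ref{sec:Kahler} --- specifically that $\Omega_M$ is a positive $(1,1)$-form for the complex structure on $M\simeq G_\C\times_{K_\C}V$, so that $\Lcal_P$ becomes Grauert-positive on the reduced space; this ultimately rests on the properness of $\Phi_V$.
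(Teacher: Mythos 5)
Your overall strategy is the paper's: reduce the dichotomy to the K\"ahler machinery of Section \ref{sec:Kahler} (ultimately Proposition \ref{prop:HS} and Theorem \ref{QR=0:GSS}) after checking that $M\simeq G_\C\times_{K_\C}V$ fits the non-compact setting with proper moment map. The paper's execution is shorter and stays entirely on the sheaf-theoretic side: it observes that $\Gamma(M,\C^{\otimes n})=\mathcal{C}^{hol}(M)$ for all $n$, that the multiplicity $\mm^{hol}(\mu)$ of $V^G_\mu$ in $\mathcal{C}^{hol}(G_\C\times_{K_\C}V)$ coincides with $\mm(\mu)$ because $\bigoplus_\lambda V^G_\lambda\otimes[(V^G_\lambda)^*\vert_K\otimes\Sym(V^*)]^K$ is dense in $\mathcal{C}^{hol}(M)$, and then quotes Lemma \ref{lem-de-base} verbatim (whose proof is exactly the shifting trick on $M\times(G\mu)^-$ that you carry out by hand). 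Your forward implications, done index-theoretically via (\ref{eq:1}), Theorem \ref{th:QR=0} and Corollary \ref{coro:point}, are correct and are in fact a useful by-product (they reprove the explicit formula for $\mm(n\mu)$), but they are not needed once Lemma \ref{lem-de-base} is available.

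The one step you should tighten is the identification $\mathbf{H}^P(nq)=\mm(nq\mu)$ for the converse implications. You propose to get it from ``the reduced-space form of $[Q,R]=0$'' plus Kodaira vanishing on $P_0=M_\mu$; but the reduced-space statement the paper actually proves (Theorem \ref{theo: Qsemiregular}) is only for quasi-regular values, and $M_\mu$ may be a singular GIT quotient, so that chain has a gap as written. The robust route — and the paper's — is to bypass the index theory here entirely: $\Gamma(P,\Lcal_P^{\otimes n})^G=[\mathcal{C}^{hol}(M)\otimes(V^G_{n\mu})^*]^G=\mm^{hol}(n\mu)=\mm(n\mu)$ by Borel--Weil on the second factor and the density argument above, after which Theorem \ref{QR=0:GSS} and Proposition \ref{prop:HS} give the polynomial growth of degree $\dim_\C M_\mu$. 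Also, your closing remark conflates two separate hypotheses: the compatibility of $J_M$ with $\Omega_M$ is a pointwise verification independent of $\Phi_V$, whereas properness of $\Phi$ (needed for Sjamaar's non-compact results) is what follows from the finite-multiplicity hypothesis via $\|\Phi_V(v)\|\geq c\|v\|^2$.
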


\begin{proof}We will show that the proof follows from Lemma \ref{lem-de-base}.

The symplectic manifold $M=G\times_K(\qgot\oplus V)$ admits a natural identification with the complex manifold $G_\C\times_{K_\C} V$, through the map $[g; X\oplus v]\mapsto [ge^{iX}; v]$. Hence $M$ inherits a $G_\C$-action and a $G_\C$-invariant (integrable) complex structure $J_M$: it is not difficult to check that $J_M$ is compatible with the symplectic form $\Omega_M$.

We are in the setting of Section \ref{sec:Kahler}, where the trivial line bundle $\C\to M$ prequantizes $\Omega_M$. In this context, the space 
$\Gamma(M,\C^{\otimes n})$ of holomorphic section does not depends on $n\in \mathbb{N}$ and is equal to the vector space 
$\mathcal{C}^{hol}(M)$ of holomorphic functions on $M$.

Let us recall Lemma \ref{lem-de-base} which compares the behaviour of the multiplicities
$\mm^{hol}(\mu):= \dim \left[\mathcal{C}^{hol}(M)\otimes (V^G_{\mu})^*\right]^G$ with the reduced spaces $M_\mu:=\Phi^{-1}_M(G\mu)/G$.

\begin{lem}
$\bullet$  $\mm^{hol}(n\mu)=0,\ n\geq 1 \ \Longleftrightarrow \ M_\mu=\emptyset$.

$\bullet$ $\mm^{hol}(n\mu)$ is non-zero and bounded $\Longleftrightarrow \ M_\mu=\{pt\}$. 
\end{lem}

Since the vector space $\mathcal{C}^{hol}(G_\C\times_{K_\C} V)$ admits the vector space 
$$
\bigoplus_{\lambda\in \hat{G}}\ V_\lambda^G\otimes \left[(V_\lambda^G)^*\vert_K\otimes \Sym(V^*)\right]^K
$$
as a dense subspace, we know that the multiplicities $\mm^{hol}(\mu)$ and $\mm(\mu)$ coincide. The proof is then completed.
\end{proof}

%%%%%%%%%%%%%%%%%%%%%%%%%%%%%%%%%%%%%%%%%%%%%%%%%%%%%%%%%%%%%%
%%%%%%%%%%%%%%%%%%%%%%%%%%%%%%%%%%%%%%%%%%%%%%%%%%%%%%%%%%%%%%
\section{Examples}\label{section:example}
%%%%%%%%%%%%%%%%%%%%%%%%%%%%%%%%%%%%%%%%%%%%%%%%%%%%%%%%%%%%%%
%%%%%%%%%%%%%%%%%%%%%%%%%%%%%%%%%%%%%%%%%%%%%%%%%%%%%%%%%%%%%%

Let $\rho: G\to\tilde{G}$ be a morphism between two connected compact Lie groups. We denote $d\rho : \ggot \to \tilde{\ggot}$ the induced Lie algebras morphism, and $\pi : \tilde{\ggot}^* \to \ggot^*$ the dual map.

Select maximal tori $T$ in $G$ and $\tilde{T}$ in $\tilde{G}$, such that $\rho(T)\subset \tilde{T}$. We still denote 
$d\rho : \tgot \to \tilde{\tgot}$ the induced map, and $\pi : \tilde{\tgot}^* \to \tgot^*$ the dual map. 
Let $\tilde{\Lambda}\subset\tilde{\tgot}^*$, $\Lambda\subset\tgot^*$ be the set of weights for the torus $\tilde{T}$ and 
$T$: we have naturally that $\pi(\tilde{\Lambda})\subset \Lambda$. 

Let $\tilde{\Rgot}:=\Rgot(\tilde{G},\tilde{T})$ (resp. $\Rgot:=\Rgot(G,T)$) be any set of roots 
for the group $\tilde{G}$ (resp. $G$). Recall that an element $\tilde{\xi}\in\tilde{\tgot}^*$ defines a parabolic sub-algebra $\tilde{\pgot}_{\tilde{\xi}}:= \tilde{\tgot}_\C\oplus \sum_{(\alpha,\tilde{\xi})\geq 0}(\tilde{\ggot}_\C)_\alpha$ of the reductive Lie algebra $\tilde{\ggot}_\C$. Its nilpotent radical is 
$\tilde{\ngot}_{\tilde{\xi}}:=  \sum_{(\alpha,\tilde{\xi})> 0}(\tilde{\ggot}_\C)_\alpha$.

\begin{defi}\label{def:lambda-adapted}
An element $\tilde{\xi}\in\tilde{\tgot}^*$ is adapted to the group $G$ if the set 
$\pi(\{\alpha\in\tilde{\Rgot},(\alpha,\tilde{\xi})>0\})$ is contained in an open half space, i.e. if 
there exists $\xi_o\in\tgot^*$ such that $\forall \alpha\in \tilde{\Rgot},\  (\alpha,\tilde{\xi})>0 \Longrightarrow 
(\pi(\alpha),\xi_o) >0$.
\end{defi}

%This notion can be defined at the level of positive root system.
%
%\begin{defi}\label{def:root-adapted}
%A positive root system $\tilde{\Rgot}^+$ is a adapted to the group $G$ is the 
%$\pi(\tilde{\Rgot}^+)$ is contained in an open half space. 
%\end{defi}
%
%Equivalently a positive root system $\tilde{\Rgot}^+$ is a adapted to the group $G$ if there exists $\xi_o\in\tgot^*$ such that $\tilde{\Rgot}^+=\{\alpha\in \tilde{\Rgot},  (\pi(\alpha),\xi_o) >0\}$. If we have an adapted root system  $\tilde{\Rgot}^+$, then any element $\tilde{\xi}$ of the corresponding Weyl chamber is adapted to $G$.
%
%

\medskip

Let $\tilde{\Ocal}$ be a coadjoint orbit of the group $\tilde{G}$. 
The moment map $\tilde{\Ocal}\to \ggot^*$ relative to the action of $G$ on 
$\tilde{\Ocal}$ is the restriction of $\pi$ on $\tilde{\Ocal}$. Hence for any 
$\xi\in\kgot^*$, the $G$-reduction of $\tilde{\Ocal}$ at $\xi$ is equal to $\tilde{\Ocal}\cap \pi^{-1}(G\xi)/G$.

The main tool used in this section is the following 

\begin{prop} \label{prop:xi-adapted}
Let  $\tilde{\xi}\in\tilde{\tgot}^*$ and $\xi=\pi(\tilde{\xi})$. If $\tilde{\xi}$ is $G$-adapted, we have 
\begin{itemize}
\item the $G$-reduction of the coadjoint orbit $\tilde{G}\tilde{\xi}$ at $\xi$ is reduced to a point,
\item $\rho(G_\xi)\subset \tilde{G}_{\tilde{\xi}}$,
\item $\rho(\pgot_{\xi})\subset \tilde{\pgot}_{\tilde{\xi}}$, where $\pgot_{\xi}\subset\ggot_\C$ and $\tilde{\pgot}_{\tilde{\xi}}\subset\tilde{\ggot}_\C$ are the parabolic sub-algebras defined  respectively 
by $\xi\in\tgot^*$ and $\tilde{\xi}\in\tilde{\tgot}^*$,
\item The linear map $\rho : \pgot_{\xi}\to \tilde{\pgot}_{\tilde{\xi}}$ factorizes to a linear map 
$\overline{\rho}: \ngot_{\xi}\to \tilde{\ngot}_{\tilde{\xi}}$.
\end{itemize}
\end{prop}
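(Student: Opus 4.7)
The plan is to establish (1) geometrically via a convexity argument, deduce (2) as an immediate consequence, and then extract (3) and (4) from a weight-space analysis of $d\rho$. The adapted hypothesis furnishes an element $\xi_o\in\tgot^*$ with $(\pi(\alpha),\xi_o)>0$ for every $\alpha\in\tilde{\Rgot}$ satisfying $(\alpha,\tilde{\xi})>0$; I denote by $\xi_o^\sharp\in\tgot$ its image under a chosen invariant inner product on $\ggot$.

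For (1), I would consider the function $f\colon\tilde{G}\tilde{\xi}\to\R$, $f(\tilde{\eta}):=\langle\pi(\tilde{\eta}),\xi_o^\sharp\rangle$, which is a component of the $T$-moment map on the coadjoint orbit. The point $\tilde{\xi}\in\tilde{\tgot}^*$ is $T$-fixed, hence critical for $f$; identifying the holomorphic tangent space at $\tilde{\xi}$ with the opposite nilradical $\tilde{\ngot}_{\tilde{\xi}}^{-}$ (whose $\tilde{T}$-weights are $\{-\alpha:(\alpha,\tilde{\xi})>0\}$), the adapted hypothesis forces all $\xi_o^\sharp$-eigenvalues on this space to be strictly negative. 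Thus $\tilde{\xi}$ is a non-degenerate local maximum of $f$, and a neighborhood of $\tilde{\xi}$ meets the level set $f^{-1}(f(\tilde{\xi}))$ only at $\tilde{\xi}$ itself. Writing $\Phi_T\colon\tilde{G}\tilde{\xi}\to\tgot^*$ for the full $T$-moment map, the fiber $\Phi_T^{-1}(\xi)$ is contained in $f^{-1}(f(\tilde{\xi}))$ and is connected by the Atiyah-Guillemin-Sternberg convexity theorem, so combining these facts forces $\Phi_T^{-1}(\xi)=\{\tilde{\xi}\}$. A fortiori $\pi^{-1}(\xi)\cap\tilde{G}\tilde{\xi}=\{\tilde{\xi}\}$, so the $G$-reduction of the coadjoint orbit at $\xi$ is a single point.

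Assertion (2) now follows at once: the group $G_\xi$ acts on $\tilde{G}\tilde{\xi}$ and preserves $\pi^{-1}(\xi)=\{\tilde{\xi}\}$ by equivariance, hence stabilizes $\tilde{\xi}$, giving $\rho(G_\xi)\subset\tilde{G}_{\tilde{\xi}}$; differentiating and complexifying yields $d\rho(\lgot_\xi)\subset\tilde{\lgot}_{\tilde{\xi}}$, where $\lgot_\xi$ is the Levi factor of $\pgot_\xi$. For (3) and (4), I would analyse $d\rho$ weight by weight: any $X\in(\ggot_\C)_\alpha$ satisfies $d\rho(X)=\sum_{\pi(\beta)=\alpha}X_\beta$ with $X_\beta\in(\tilde{\ggot}_\C)_\beta$ by $T$-equivariance. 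If $(\alpha,\xi)>0$ and some nonzero component had $(\beta,\tilde{\xi})<0$, then applying the adapted condition to $-\beta$ yields $(\alpha,\xi_o)<0$; after replacing $\xi_o$, if necessary, by an element in the relative interior of the face of the Weyl chamber containing $\xi$ (an open convex adjustment that preserves adaptedness), this contradicts $(\alpha,\xi)>0$, forcing $X_\beta\in\tilde{\pgot}_{\tilde{\xi}}$. Combined with the Levi containment from (2), this establishes (3); the composite $\pgot_\xi\to\tilde{\pgot}_{\tilde{\xi}}\twoheadrightarrow\tilde{\ngot}_{\tilde{\xi}}$ then vanishes on $\lgot_\xi$ and descends to $\overline{\rho}\colon\ngot_\xi\to\tilde{\ngot}_{\tilde{\xi}}$, yielding (4).

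The principal obstacle is the global step of (1): upgrading the local isolation of $\tilde{\xi}$ in $f^{-1}(f(\tilde{\xi}))$ to global uniqueness in $\Phi_T^{-1}(\xi)$ relies crucially on the Atiyah-Guillemin-Sternberg connectedness of torus moment-map fibers. A secondary technical point in the weight-space argument is arranging that $\xi_o$ lies in the same open face of the Weyl chamber as $\xi$, which is needed to rule out the bad case $(\beta,\tilde{\xi})<0$ in the proof of (3)-(4).
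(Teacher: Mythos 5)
Your treatment of the first two bullets is correct, though it takes a genuinely different route from the paper: you show that $\tilde{\xi}$ is a nondegenerate local extremum of the moment-map component $f=\langle\Phi_T,\xi_o^\sharp\rangle$ and then invoke Atiyah's connectedness of torus moment-map fibers to promote local isolation to $\Phi_T^{-1}(\xi)=\{\tilde{\xi}\}$, whereas the paper projects the orbit to $\tilde{\tgot}^*$, identifies the image with $\mathrm{Conv}(\tilde{W}\tilde{\xi})$ by the convexity theorem, and shows that the tangent cone of this polytope at $\tilde{\xi}$ meets $\ker(\pi:\tilde{\tgot}^*\to\tgot^*)$ only at the origin. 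Both arguments use adaptedness in the same way (all the relevant weights pair strictly positively with $\xi_o$) and both are sound; yours trades a convexity-of-the-image statement for a connectedness-of-the-fiber statement.

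The proof of the third bullet, however, has a genuine gap at the step ``after replacing $\xi_o$, if necessary, by an element in the relative interior of the face of the Weyl chamber containing $\xi$.'' Adaptedness only provides \emph{some} $\xi_o\in\tgot^*$ separating $\pi(\{\alpha\in\tilde{\Rgot}:(\alpha,\tilde{\xi})>0\})$, and a priori this $\xi_o$ bears no relation to $\xi=\pi(\tilde{\xi})$: the admissible $\xi_o$'s form an open convex cone determined by $\tilde{\xi}$ and the roots of $\tilde{G}$, and nothing in the definition guarantees that this cone meets the face of the chamber arrangement of $\Rgot$ containing $\xi$, nor even that $(\alpha,\xi)>0$ implies $(\alpha,\xi_o)\geq 0$. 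Your contradiction requires exactly this compatibility, and asserting it as ``an open convex adjustment that preserves adaptedness'' begs the question. This is precisely where the paper inserts Heckman's Lemma 8.3 \cite{Heckman82}, namely $\{\beta\in\Rgot:(\beta,\xi)>0\}\subset\pi(\{\alpha\in\tilde{\Rgot}:(\alpha,\tilde{\xi})>0\})$; combined with adaptedness this shows that \emph{every} admissible $\xi_o$ automatically satisfies $(\beta,\xi)>0\Rightarrow(\beta,\xi_o)>0$, which is the compatibility you assumed. Without this lemma (or an equivalent substitute) the exclusion of components $X_\beta$ with $(\beta,\tilde{\xi})<0$ does not go through, and the fourth bullet, which you correctly reduce to the third plus the Levi containment, inherits the gap.
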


\begin{proof} It is immediate to see that the first two points are a consequence of the following equality
\begin{equation}\label{eq:S}
\tilde{G}\tilde{\xi}\cap \pi^{-1}(\xi)=\{\tilde{\xi}\}. 
\end{equation}

Let us denote $\pi_{\tilde{\tgot}}:\tilde{\ggot}^*\to \tilde{\tgot}^*$ the projection. Since 
$\tilde{G}\tilde{\xi}\cap \pi_{\tilde{\tgot}}^{-1}(\tilde{\xi})$ is reduced to the singleton $\{\tilde{\xi}\}$, 
the identity (\ref{eq:S}) follows from the following identity
\begin{equation}\label{eq:S-prime}
\pi_{\tilde{\tgot}}(\tilde{\Ocal})\cap \pi_{\tilde{\tgot}}\left(\pi^{-1}(\xi)\right)=\{\tilde{\xi}\}. 
\end{equation}
Thanks to the Convexity Theorem \cite{LMTW} we know that $\pi_{\tilde{\tgot}}(\tilde{\Ocal})$ is equal 
to the convex hull $\mathrm{Conv}(\tilde{W}\tilde{\xi})$, where $\tilde{W}$ is the Weyl group of 
$(\tilde{G},\tilde{T})$. On the other hand the set $\pi_{\tilde{\tgot}}\left(\pi^{-1}(\xi)\right)$ 
is equal to the affine subspace $\tilde{\xi}+ E$ where $E\subset\tilde{\tgot}^*$ is equal to the kernel of 
$\pi : \tilde{\tgot}^* \to \tgot^*$. Let $\Acal\subset \tilde{\tgot}^*$ be the tangent cone at $\tilde{\xi}$ 
of the convex set $\mathrm{Conv}(\tilde{W}\tilde{\xi})$: by standard computation we know that 
$-\Acal$ is the cone generated by $\alpha\in\tilde{\Rgot},(\alpha,\tilde{\xi})>0$. 
Since $\pi_{\tilde{\tgot}}(\tilde{\Ocal})\subset \tilde{\xi}+ \Acal$ 
we see that (\ref{eq:S-prime}) is a consequence of 
\begin{equation}\label{eq:E}
\Acal \cap E=\{0\}. 
\end{equation}
Our proof of (\ref{eq:S}) is now completed since (\ref{eq:E}) follows immediately from the fact that for some 
$\xi_o\in\tgot$ we have: $\forall \alpha\in \tilde{\Rgot},\  (\alpha,\tilde{\xi})>0 \Longrightarrow 
(\pi(\alpha),\xi_o) >0$.

Let us concentrate to the third point. We know already that $\rho(G_\xi)\subset \tilde{G}_{\tilde{\xi}}$. Hence to get the inclusion 
$\rho(\pgot_{\xi})\subset \tilde{\pgot}_{\tilde{\xi}}$ we have just to check that 
\begin{equation}\label{eq:beta-1}
\rho((\ggot_\C)_\beta)\subset \tilde{\pgot}_{\tilde{\xi}}
\end{equation} 
for any $\beta \in\Rgot$ such that $(\beta,\xi)>0$. A small computation shows that (\ref{eq:beta-1}) is a consequence of 
\begin{equation}\label{eq:beta-2}
\left\{\alpha\in\tilde{\Rgot}, (\alpha,\tilde{\xi})<0\right\}\bigcap \pi^{-1}(\beta)=\emptyset.
\end{equation}
It is proved in \cite{Heckman82}[Lemma 8.3], that
\begin{equation}\label{eq:beta-3}
\left\{\beta\in\Rgot, (\beta, \xi)>0\right\}\subset 
\pi\left(\left\{\alpha\in\tilde{\Rgot}, (\alpha,\tilde{\xi})>0\right\}\right).
\end{equation}
Since $\tilde{\xi}\in\tilde{\tgot}^*$ is adapted to the group $G$, we have that
\begin{equation}\label{eq:beta-4}
\pi\left(\left\{\alpha\in\tilde{\Rgot}, (\alpha,\tilde{\xi})>0\right\}\right)\bigcap
\pi\left(\left\{\alpha\in\tilde{\Rgot}, (\alpha,\tilde{\xi})<0\right\}\right)=\emptyset.
\end{equation}
Hence (\ref{eq:beta-2}) follows from the identities  (\ref{eq:beta-3}) and (\ref{eq:beta-4}).

For the last point we just use that the linear map $\rho : \pgot_{\xi}\to \tilde{\pgot}_{\tilde{\xi}}$ sends 
$(\ggot_\xi)_\C$ into $(\tilde{\ggot}_{\tilde{\xi}})_\C$. Then it factorizes to a map 
$\overline{\rho}$ from $\ngot_{\xi}\simeq \pgot_{\xi}/(\ggot_{\xi})_\C$ into 
$\tilde{\ngot}_{\tilde{\xi}}\simeq \tilde{\pgot}_{\tilde{\xi}}/(\tilde{\ggot}_{\tilde{\xi}})_\C$.

\end{proof}

\medskip

Let us fix some set of dominant weights $\tilde{\Lambda}_{\geq 0}$, $\Lambda_{\geq 0}$ for the groups 
$\tilde{G}$ and $G$. For any $(\mu,\tilde{\mu})\in \Lambda_{\geq 0}\times\tilde{\Lambda}_{\geq 0}$,
we denote $V^G_\mu$, $V^{\tilde{G}}_{\tilde{\mu}}$ the corresponding irreducible representations of $G$ and $\tilde{G}$, 
and we define $\mm(\mu,\tilde{\mu})$ as the multiplicity of $V_{\mu}^G$ in $V^{\tilde{G}}_{\tilde{\mu}}\vert_{G}$.

We give now a specialization of Theorem B.

\begin{theo}\label{theoreme-B-second}
Let $(\tilde{\mu},\tilde{w})\in\tilde{\Lambda}_{\geq 0}\times\tilde{W}$ such that $\tilde{w}\tilde{\mu}$ is adapted to $G$. 

Up to the conjugation by an element of the Weyl group of $G$ we can assume that $\mu_{\tilde{w}}:=
\pi(\tilde{w}\tilde{\mu})$ is a dominant weight. We denote $H\subset G$ and $\tilde{H}\subset \tilde{G}$ the respective 
stabilizers\footnote{Recall that $\rho(H)\subset\tilde{H}$.} of $\mu_{\tilde{w}}$ and 
$\tilde{w}\tilde{\mu}$.

\begin{itemize}
\item We have $\mm(n\mu_{\tilde{w}},n\tilde{\mu})=1$, for all $n\geq 1$.

\item For any dominant weight $(\lambda,\tilde{\lambda})$ the sequence 
$\mm(\lambda+ n\mu_{\tilde{w}},\tilde{\lambda}+ n\tilde{\mu})$ is increasing 
and equal to 
$$
\dim \left[\Sym(\mathbb{W}^*)\otimes V_{\tilde{\lambda}}^{\tilde{H}}\vert_H\otimes (V_{\lambda}^H)^*\right]^H
$$
for $n$ large enough. Here $\mathbb{W}$ corresponds to the $H$-module 
\begin{equation}\label{eq:W-particulier}
\tilde{\ngot}_{\tilde{w}\tilde{\mu}}/\overline{\rho}(\ngot_{\mu_{\tilde{w}}}).
\end{equation}
\end{itemize}
\end{theo}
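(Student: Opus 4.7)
The plan is to deduce Theorem~\ref{theoreme-B-second} from Theorem~B by using Proposition~\ref{prop:xi-adapted} to verify its hypothesis, and then to carefully identify each of the three $H$-modules $\mathbb{W}, \mathbb{D}, \mathbb{E}_{\lambda,\tilde{\lambda}}$ appearing in Theorem~B with the concrete data in the statement.

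First I would set $\tilde{\xi} := \tilde{w}\tilde{\mu}$, so that $\pi(\tilde{\xi}) = \mu_{\tilde{w}}$. Applying Proposition~\ref{prop:xi-adapted} to $\tilde{\xi}$ (which is $G$-adapted by hypothesis) yields four conclusions: (a) $(\tilde{G}\tilde{\mu})_{\mu_{\tilde{w}}}$ is a singleton, (b) $\rho(H) \subset \tilde{H}$, so that $H$ coincides with the $G$-stabilizer of $p := (\tilde{\xi}, \mu_{\tilde{w}})$, (c) $\rho(\pgot_{\mu_{\tilde{w}}}) \subset \tilde{\pgot}_{\tilde{\xi}}$, and (d) there is an induced $H$-equivariant linear map $\overline{\rho}: \ngot_{\mu_{\tilde{w}}} \to \tilde{\ngot}_{\tilde{\xi}}$. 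Point~(a) is exactly the hypothesis allowing us to apply Theorem~B at $(\mu, \tilde{\mu}) = (\mu_{\tilde{w}}, \tilde{\mu})$.

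Next I would translate the three $H$-modules produced by Theorem~B into the form asserted in the theorem. For $\mathbb{W} = \T_{\tilde{\xi}} \tilde{G}\tilde{\mu}/\rho(\pgot_{\mu_{\tilde{w}}})\cdot \tilde{\xi}$, the identification $\T_{\tilde{\xi}}\tilde{G}\tilde{\mu} \cong \tilde{\ngot}_{\tilde{\xi}}$ coming from the Borel-Weil description of the complex coadjoint orbit, combined with~(c) and~(d), gives $\mathbb{W} \cong \tilde{\ngot}_{\tilde{\xi}}/\overline{\rho}(\ngot_{\mu_{\tilde{w}}})$, matching~(\ref{eq:W-particulier}). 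For $\mathbb{E}_{\lambda,\tilde{\lambda}}$, the fiber $\Ecal_{\tilde{\lambda}}|_{\tilde{\xi}}$ at $\tilde{\xi} = \tilde{w}\tilde{\mu}$, viewed as an $\tilde{H}$-module via the $\tilde{w}$-conjugation built into Borel-Weil, is $V^{\tilde{H}}_{\tilde{\lambda}}$; combined with $G_{\mu_{\tilde{w}}} = H$, this gives $\mathbb{E}_{\lambda,\tilde{\lambda}} = V^{\tilde{H}}_{\tilde{\lambda}}|_H \otimes (V^H_\lambda)^*$ as an $H$-module.

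The technical heart of the proof, and the main obstacle, is to show that the line $\mathbb{D} = [\C_{\tilde{\mu}}]|_{\tilde{\xi}} \otimes (\C_{\mu_{\tilde{w}}})^*|_H$ is the \emph{trivial} $H$-module. At the level of $T$, the character of $\mathbb{D}$ is $\rho^*(\tilde{\xi}) - \mu_{\tilde{w}} = \pi(\tilde{\xi}) - \mu_{\tilde{w}} = 0$, so the statement already cited in the paper (``$H^o$ acts trivially on $\mathbb{D}$ by Kostant's relations'') reduces the question to the component group. To handle this I would argue that since $H = G_{\mu_{\tilde{w}}}$ is a Levi subgroup of $G$ and $\mu_{\tilde{w}}$ lies in the (closed) dominant chamber, both $[\C_{\tilde{\mu}}]|_{\tilde{\xi}}|_H$ and $\C_{\mu_{\tilde{w}}}|_H$ are one-dimensional $H$-characters extending the same $T$-weight $\mu_{\tilde{w}}$; uniqueness of such an extension on the Levi forces them to coincide, so $\mathbb{D} \cong \C$.

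Granted this, the theorem is immediate: Theorem~B's first bullet yields $\mm(n\mu_{\tilde{w}}, n\tilde{\mu}) = \dim[\mathbb{D}^{\otimes n}]^H = 1$ for all $n \geq 1$. Since $\mm(\mu_{\tilde{w}}, \tilde{\mu}) = 1 \neq 0$, the third bullet of Theorem~B applies and gives both the monotonicity of the sequence $\mm(\lambda + n\mu_{\tilde{w}}, \tilde{\lambda} + n\tilde{\mu})$ and the limiting value $\dim[\Sym(\mathbb{W}^*) \otimes V^{\tilde{H}}_{\tilde{\lambda}}|_H \otimes (V^H_\lambda)^*]^H$, with $\mathbb{W}$ as in~(\ref{eq:W-particulier}).
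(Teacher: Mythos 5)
Your proposal is correct and follows essentially the same route as the paper: verify the hypothesis of Theorem~B via Proposition~\ref{prop:xi-adapted}, identify $\mathbb{W}$ with $\tilde{\ngot}_{\tilde{w}\tilde{\mu}}/\overline{\rho}(\ngot_{\mu_{\tilde{w}}})$ through $\T_{\tilde{w}\tilde{\mu}}\tilde{G}\tilde{\mu}\simeq\tilde{\pgot}_{\tilde{w}\tilde{\mu}}/\tilde{\hgot}_\C$, and observe that $\mathbb{D}$ is trivial. For that last point the paper simply notes that the $G$-stabilizer of $\tilde{w}\tilde{\mu}$ equals the \emph{connected} group $H=G_{\mu_{\tilde{w}}}$ and invokes the already-stated triviality of the $H^o$-action, which is the same connectedness fact underlying your character-comparison argument (your only slip is cosmetic: the relevant item is the second, not third, bullet of Theorem~B).
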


\begin{proof}
The first point is due to the fact that the stabilizer of $\tilde{w}\tilde{\mu}$ relative to the $G$-action is 
equal to the connected subgroup $H$, hence the $H$-module $\mathbb{D}$ is trivial. For the second point we have just to check the computation of the $H$-module $\mathbb{W}$. Let $a=\tilde{w}\tilde{\mu}\in \tilde{\Ocal}:=\tilde{G}\tilde{\mu}$.
Here $\T_{a}\tilde{\Ocal}\simeq \tilde{\pgot}_{\tilde{w}\tilde{\mu}}/ \tilde{\hgot}_\C$. As $\rho(\pgot_{\mu_{\tilde{w}}})\subset \tilde{\pgot}_{\tilde{w}\tilde{\mu}}$
one sees directly that $\mathbb{W}=\T_{a}\tilde{\Ocal}/\rho(\pgot_{\mu_{\tilde{w}}})\cdot a$ is equal to (\ref{eq:W-particulier}).
\end{proof}

\bigskip

We have another specialization of Theorem B that will be used in the plethysm case. We suppose here that the sets 
of positive roots $\tilde{\Rgot}^+$ and $\Rgot^+$ are chosen so that the corresponding Borel subgroups 
$B\subset G_\C$ and $\tilde{B}\subset \tilde{G}_\C$ satisfy 
\begin{equation}\label{eq:B-tilde-B}
\rho(B)\subset \tilde{B}.
\end{equation} 
Let $\tilde{\Lambda}_{\geq 0}$, $\Lambda_{\geq 0}$ be the corresponding set of dominants weight. 
When we work with this parametrization we have the following classical fact.
\begin{lem}
Let $\tilde{\mu}\in \tilde{\Lambda}_{\geq 0}$ and $\mu=\pi(\tilde{\mu})$. We have 
\begin{enumerate}
\item $\mu\in \Lambda_{\geq 0}$ and $\mm(\mu,\tilde{\mu})\neq 0$,
\item $\rho(\pgot_\mu)\subset \tilde{\pgot}_{\tilde{\mu}}$ and $\rho(G_\mu)\subset \tilde{G}_{\tilde{\mu}}$.
\end{enumerate}
\end{lem}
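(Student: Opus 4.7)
The plan is to deduce both claims from a careful examination of a single $\tilde B$-highest weight vector $v_{\tilde\mu}\in V_{\tilde\mu}^{\tilde G}$, combined with the projective interpretation of parabolic subgroups. No auxiliary constructions are needed; everything follows from tracking what $\rho$ does to $v_{\tilde\mu}$.

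For (1), the hypothesis $\rho(B)\subset\tilde B$ implies that every positive root vector of $\ggot_\C$ is sent by $\rho$ into the span of the positive root vectors of $\tilde\ggot_\C$, and therefore annihilates $v_{\tilde\mu}$; meanwhile $T$ acts on $v_{\tilde\mu}$ through the character $\tilde\mu\circ d\rho=\pi(\tilde\mu)=\mu$. Thus $v_{\tilde\mu}$ is a $B$-highest weight vector of weight $\mu$ inside $V_{\tilde\mu}^{\tilde G}\vert_G$. In particular $\mu\in\Lambda_{\geq 0}$, and the irreducible $G$-submodule $G_\C\cdot v_{\tilde\mu}\subset V_{\tilde\mu}^{\tilde G}$ is isomorphic to $V_\mu^G$, giving $\mm(\mu,\tilde\mu)\geq 1$.

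For (2), I would use the classical identification of $P_\mu\subset G_\C$ (the parabolic with Lie algebra $\pgot_\mu$) with the full stabilizer in $G_\C$ of the line $[v_\mu]\in\mathbb P(V_\mu^G)$, and similarly of $\tilde P_{\tilde\mu}$ with the stabilizer of $[v_{\tilde\mu}]$. The embedding from the previous paragraph identifies $v_\mu$ with $v_{\tilde\mu}$; hence if $g\in P_\mu$ acts on $v_\mu$ by a scalar $\chi(g)$, then $\rho(g)\cdot v_{\tilde\mu}=\chi(g)v_{\tilde\mu}$, so $\rho(P_\mu)\subset\tilde P_{\tilde\mu}$. Differentiating at the identity gives $\rho(\pgot_\mu)\subset\tilde\pgot_{\tilde\mu}$. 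The inclusion $\rho(G_\mu)\subset\tilde G_{\tilde\mu}$ then follows from the standard fact that a coadjoint stabilizer coincides with the compact part of the associated parabolic: $G_\mu=G\cap P_\mu$ and $\tilde G_{\tilde\mu}=\tilde G\cap \tilde P_{\tilde\mu}$; then $\rho(G_\mu)\subset \tilde G\cap\rho(P_\mu)\subset \tilde G\cap\tilde P_{\tilde\mu}=\tilde G_{\tilde\mu}$.

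The only input that is not essentially a tautology is that $P_\mu$ is the \emph{full} stabilizer of $[v_\mu]$, equivalently that $\pgot_\mu$ annihilates $v_\mu$ modulo $\C v_\mu$. This is the sole place where the dominance of $\mu$ is used: any root $\alpha$ with $(\alpha,\mu)=0$ is a sum of simple roots each orthogonal to $\mu$, and the corresponding root vectors are then built out of generators that each kill $v_\mu$ by $\slgot_2$-theory. This is entirely standard, and apart from this verification no other step presents any obstacle.
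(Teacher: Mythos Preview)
Your proof is correct and follows essentially the same approach as the paper: both take a $\tilde B$-highest weight vector $v_{\tilde\mu}$, observe via $\rho(B)\subset\tilde B$ that it is also a $B$-highest weight vector of weight $\mu$ for $G$, and then use the projective description of parabolics as stabilizers of the highest weight line to deduce $\rho(P_\mu)\subset\tilde P_{\tilde\mu}$. Your derivation of the compact inclusion $\rho(G_\mu)\subset\tilde G_{\tilde\mu}$ via $G_\mu=G\cap P_\mu$ is slightly more explicit than the paper's, which simply remarks that the same argument applied to the compact groups gives the result, but the substance is identical.
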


\begin{proof}
Let $\tilde{V}_{\tilde{\mu}}$ be an irreducible representation of $\tilde{G}$ with highest weight 
$\tilde{\mu}$. There exists a non-zero vector $v_o\in \tilde{V}_{\tilde{\mu}}$ such that the line $\C v_o$ is fixed 
by $\tilde{B}$ and the maximal torus $\tilde{T}$ acts on $\C v_o$ through the character $\tilde{t}\mapsto \tilde{t}^{\tilde{\mu}}$. 

Let $V$ be the vector space generated by $\rho(g)v_o, g\in G$. It is an irreducible representation of $G$ and 
$v_o$ is still a highest weight vector for the $G$-action~:  the line $\C v_o$ is fixed by $B$ and the maximal 
torus $T$ acts on $\C v_o$ through the character $t\mapsto t^{\mu}$. This forces $\mu$ to be a dominant weight for 
$G$ (relatively to $B$) and then $V\subset \tilde{V}_{\tilde{\mu}}$ is $G$-representation with highest weight $\mu$ : the first point is proved.

For the second point we look at the $\tilde{G}_\C$-action (resp. $G_\C$-action ) on the projective space 
$\mathbb{P}(\tilde{V}_{\tilde{\mu}})$ (resp. $\mathbb{P}(V)$), the stabilizer subgroup of the line $\C v_o$ 
is equal to the parabolic subgroup $\tilde{P}_{\tilde{\mu}}\subset \tilde{G}_\C$ (resp. $P_\mu\subset G_\C$) : 
hence $\rho(P_\mu)\subset \tilde{P}_{\tilde{\mu}}$. If we work with the actions of the compact groups $G$ and 
$\tilde{G}$ we get similarly that $\rho(G_\mu)\subset \tilde{G}_{\tilde{\mu}}$.
\end{proof}

\medskip

Like in Proposition \ref{prop:xi-adapted}, the linear map $\rho : \pgot_{\mu}\to \tilde{\pgot}_{\tilde{\mu}}$ 
factorizes to a linear map $\overline{\rho}: \ngot_{\mu}\to \tilde{\ngot}_{\tilde{\mu}}$. We have another specialization of Theorem B.

\begin{theo}\label{theoreme-B-second}
Suppose that (\ref{eq:B-tilde-B}) holds. Let $\tilde{\mu}\in\tilde{\Lambda}_{\geq 0}$ and  $\mu:=\pi(\tilde{\mu})\in\Lambda_{\geq 0}$. 
We denote $H\subset G$ and $\tilde{H}\subset \tilde{G}$ the respective stabilizers\footnote{Recall that $\rho(H)\subset\tilde{H}$.} 
of $\mu$ and $\tilde{\mu}$. Let $\mathbb{W}:=\tilde{\ngot}_{\tilde{\mu}}/\overline{\rho}(\ngot_{\mu})$.

The following statements are equivalent:
\begin{itemize}
\item[a)] $\mm(n\mu,n\tilde{\mu})=1$, for all $n\geq 1$.

\item[b)] For any dominant weight $(\lambda,\tilde{\lambda})$ the increasing sequence 
$\mm(\lambda+ n\mu,\tilde{\lambda}+ n\tilde{\mu})$ has a limit.

\item[c)] The algebra $\Sym(\mathbb{W}^*)$ has finite $H$-multiplicities.
\end{itemize}

If these statements hold the limit of the sequence $\mm(\lambda+ n\mu,\tilde{\lambda}+ n\tilde{\mu})$ is equal to 
the multiplicity of $V_{\lambda}^H$ in the $H$-module $\Sym(\mathbb{W}^*)\otimes V_{\tilde{\lambda}}^{\tilde{H}}$.
\end{theo}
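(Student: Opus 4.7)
The plan is to obtain this theorem as a direct specialization of Theorem B, using the preceding lemma as the structural input. Under hypothesis (\ref{eq:B-tilde-B}) that lemma provides three ingredients I will use throughout: $\mu=\pi(\tilde{\mu})$ is dominant, $\mm(\mu,\tilde{\mu})\neq 0$, and $\rho$ intertwines $\pgot_\mu$ with $\tilde{\pgot}_{\tilde{\mu}}$, so that $\overline{\rho}:\ngot_\mu\to\tilde{\ngot}_{\tilde{\mu}}$ is well-defined. Note also that, since $\rho(H)\subset \tilde{H}$ and $H^o$ acts trivially on $\C_\mu$ and on $\C_{\tilde{\mu}}\vert_{\tilde{\mu}}$, the $H$-module $\mathbb{D}$ of Theorem B (taken at $\xi_o=\tilde{\mu}$) is the trivial representation.

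The first step is to match the $H$-module $\mathbb{W}$ of Theorem B, namely $\T_{\tilde{\mu}}\tilde{G}\tilde{\mu}/\rho(\pgot_\mu)\cdot \tilde{\mu}$, with $\tilde{\ngot}_{\tilde{\mu}}/\overline{\rho}(\ngot_\mu)$. This is immediate from the standard $H$-equivariant isomorphism $\T_{\tilde{\mu}}\tilde{G}\tilde{\mu}\simeq \tilde{\pgot}_{\tilde{\mu}}/\tilde{\hgot}_\C\simeq \tilde{\ngot}_{\tilde{\mu}}$, which sends $\rho(\pgot_\mu)\cdot \tilde{\mu}$ onto $\overline{\rho}(\ngot_\mu)$; this is the same computation carried out in the proof of the preceding specialization of Theorem B. I expect this identification to be the only genuinely non-bookkeeping point of the proof.

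With the identification of $\mathbb{W}$ in hand, the equivalences follow formally. Since $\mm(\mu,\tilde{\mu})\neq 0$, Proposition \ref{prop:theorem-B} yields $(\tilde{G}\tilde{\mu})_\mu\neq\emptyset$, so that boundedness of $\mm(n\mu,n\tilde{\mu})$ amounts to $(\tilde{G}\tilde{\mu})_\mu=\{pt\}$, which by Proposition \ref{prop:equivalences} is in turn equivalent to $\Sym(\mathbb{W}^*)$ having finite $H$-multiplicities. The third bullet of Theorem B upgrades boundedness to the stronger statement $\mm(n\mu,n\tilde{\mu})=1$ for all $n\geq 1$, giving (a)$\Leftrightarrow$(c). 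For (a)$\Leftrightarrow$(b), I note that the sequence $\mm(\lambda+n\mu,\tilde{\lambda}+n\tilde{\mu})$ is automatically increasing under the hypothesis $\mm(\mu,\tilde{\mu})\neq 0$ by the first point of Lemma \ref{lem:M-0-point}; Theorem B's third bullet then tells me it is eventually constant, so (a) implies (b). Conversely, specializing (b) to $(\lambda,\tilde{\lambda})=(0,0)$ gives boundedness of $\mm(n\mu,n\tilde{\mu})$ and hence (a).

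Finally, for the limit formula I would unpack $\mathbb{E}_{\lambda,\tilde{\lambda}}=\Ecal_{\tilde{\lambda}}\vert_{\tilde{\mu}}\otimes (V_{\lambda}^{G_\mu})^*\vert_H$. Since $H=G_\mu$ and $\tilde{H}=\tilde{G}_{\tilde{\mu}}$, one has $\Ecal_{\tilde{\lambda}}\vert_{\tilde{\mu}}=V_{\tilde{\lambda}}^{\tilde{H}}$ and $(V_{\lambda}^{G_\mu})^*\vert_H=(V_{\lambda}^H)^*$, so that Theorem B delivers
$$
\dim\bigl[\Sym(\mathbb{W}^*)\otimes V_{\tilde{\lambda}}^{\tilde{H}}\vert_H\otimes (V_{\lambda}^H)^*\bigr]^H,
$$
which is exactly the multiplicity of $V_{\lambda}^H$ in the $H$-module $\Sym(\mathbb{W}^*)\otimes V_{\tilde{\lambda}}^{\tilde{H}}$, as asserted.
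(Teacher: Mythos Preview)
Your proof is correct and follows essentially the same route as the paper: both deduce the equivalences and the limit formula directly from Theorem~B together with Proposition~\ref{prop:theorem-B} (and implicitly Proposition~\ref{prop:equivalences}), using the preceding lemma to secure $\mm(\mu,\tilde{\mu})\neq 0$ and $\rho(\pgot_\mu)\subset\tilde{\pgot}_{\tilde{\mu}}$. You are more explicit than the paper in identifying $\mathbb{W}$ with $\tilde{\ngot}_{\tilde{\mu}}/\overline{\rho}(\ngot_\mu)$ and in unpacking $\mathbb{E}_{\lambda,\tilde{\lambda}}$; one small remark is that your claim that $\mathbb{D}$ is trivial really uses that $H=G_\mu$ is connected (so $H=H^o$), which you might state rather than leaving implicit.
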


\begin{proof}
We have constructed $(\mu,\tilde{\mu})$ so that $\mm(\mu,\tilde{\mu})\neq 0$. In this case proposition \ref{prop:theorem-B} and Theorem B tells us that the following equivalences hold 
$\mm(n\mu,n\tilde{\mu})=1, \forall n\geq 1\Longleftrightarrow \mm(n\mu,n\tilde{\mu})$ 
is bounded $\Longleftrightarrow (\tilde{G}\tilde{\mu})_\mu=\{pt\}$. Hence we have proved that $a)\Leftrightarrow c)$ and 
$b)\Rightarrow a)$. The other implication $a)\Rightarrow b)$ is also a consequence of Theorem B.
\end{proof}

%%%%%%%%%%%%%%%%%%%%%%%%%%%%%%%%%%%%%%%%%%%%%%%%%%%%%%%%%%%%%
\subsection{The Littlewood-Richardson coefficients}
%%%%%%%%%%%%%%%%%%%%%%%%%%%%%%%%%%%%%%%%%%%%%%%%%%%%%%%%%%%%%

Here we work with $G$ embedded diagonally in $\tilde{G}:=G\times G$. The map 
$\pi:\ggot^*\times \ggot^*\to \ggot^*$ is defined by $(\xi_1,\xi_2)\mapsto \xi_1+\xi_2$.

Here the multiplicity function 
$\mm: \Lambda^+_{\geq 0}\times \Lambda^+_{\geq 0}\times \Lambda^+_{\geq 0}\to \mathbb{N}$ is defined by 
$$
\mm(a,b,c):= \dim \left[ (V_a^G)^*\otimes V_b^G\otimes V_c^G\right]^G
$$

We fix an element $(\mu_1,\mu_2)\in(\Lambda^+_{\geq 0})^2$. It is easy to see that $(\mu_1,\mu_2)$ is adapted to $G$. We denote $\mu=\mu_1+\mu_2$. The stabilizer subgroup $G_{\mu}$ is equal to $G_{\mu_1}\cap G_{\mu_2}$. We work with the 
$G_{\mu}$-module 
\begin{equation}\label{eq:W-littlewood}
\mathbb{W}_{\mu_1,\mu_2}:=\sum_{\stackrel{(\alpha,\mu_1)>0}{(\alpha,\mu_2)>0}}(\ggot_\C)_\alpha.
\end{equation}

In this case Theorem \ref{theoreme-B-second} gives 

\begin{prop} Let $(\mu_1,\mu_2)\in(\Lambda^+_{\geq 0})^2$ and $\mu=\mu_1+\mu_2$.
\begin{itemize}
\item We have $\mm(n\mu,n\mu_1,n\mu_2)=1$ for any $n\geq 1$.

\item For any $(a,b,c)\in (\Lambda^+_{\geq 0})^3$, the sequence $\mm(a+n\mu,b+n\mu_1,c+n\mu_2)$ is increasing and 
equal to 
$$
\dim \left[\Sym(\mathbb{W}_{\mu_1,\mu_2}^*)\otimes (V^{G_{\mu}}_a)^*\otimes V^{G_{\mu_1}}_b\vert_{G_{\mu}}\otimes V^{G_{\mu_2}}_c\vert_{G_{\mu}}\right]^{G_{\mu}}.
$$
for $n$ large enough.
\end{itemize}
\end{prop}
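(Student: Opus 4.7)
The plan is to apply the first specialization of Theorem \ref{theoreme-B-second} to the diagonal embedding $\rho : G \hookrightarrow \tilde G := G \times G$, for which $\pi : \ggot^* \times \ggot^* \to \ggot^*$ sends $(\xi_1, \xi_2)$ to $\xi_1 + \xi_2$. Under this identification $\mm(a,b,c) = \dim[(V_a^G)^* \otimes V_b^G \otimes V_c^G]^G$ is precisely the branching multiplicity $\mm_\rho(a,(b,c))$ of $V_{(b,c)}^{\tilde G} = V_b^G \boxtimes V_c^G$ restricted to the diagonal $G$.

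First I would verify that $\tilde \mu := (\mu_1, \mu_2)$ is $G$-adapted. The roots of $\tilde G$ are $(\alpha, 0)$ and $(0, \alpha)$ for $\alpha \in \Rgot$, and both project under $\pi$ to $\alpha$. Since $\mu_1$ and $\mu_2$ are dominant, any $\tilde\alpha \in \tilde \Rgot$ with $(\tilde\alpha, \tilde\mu) > 0$ satisfies $\pi(\tilde\alpha) \in \Rgot^+$, so any $\xi_o$ strictly inside the open positive Weyl chamber of $\tgot^*$ works. Taking $\tilde w = e$ we get $\mu_{\tilde w} = \pi(\tilde\mu) = \mu$. By Proposition \ref{prop:xi-adapted}, $\rho(G_\mu) \subset \tilde G_{\tilde\mu} = G_{\mu_1} \times G_{\mu_2}$; since $\rho$ is diagonal this forces $G_\mu \subset G_{\mu_1} \cap G_{\mu_2}$, and the reverse inclusion is trivial. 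Thus $H := G_\mu = G_{\mu_1} \cap G_{\mu_2}$ and $\tilde H = G_{\mu_1} \times G_{\mu_2}$.

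The first bullet of Theorem \ref{theoreme-B-second} then gives $\mm(n\mu, n\mu_1, n\mu_2) = 1$ for all $n \geq 1$. For the second bullet the theorem yields, for $n$ large enough,
\[
\mm(a + n\mu, b + n\mu_1, c + n\mu_2) \;=\; \dim \left[\Sym(\mathbb{W}^*) \otimes V^{\tilde H}_{(b,c)}|_H \otimes (V^{H}_a)^*\right]^H.
\]
Since $V^{\tilde H}_{(b,c)} = V_b^{G_{\mu_1}} \boxtimes V_c^{G_{\mu_2}}$, its restriction to the diagonal $H$ is $V_b^{G_{\mu_1}}|_{G_\mu} \otimes V_c^{G_{\mu_2}}|_{G_\mu}$, matching the claimed formula once we identify $\mathbb{W}$ with $\mathbb{W}_{\mu_1, \mu_2}$ from (\ref{eq:W-littlewood}).

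The only remaining task, which I expect to be the main (but essentially routine) obstacle, is this identification of $\mathbb{W} = \tilde\ngot_{\tilde\mu}/\overline\rho(\ngot_\mu)$. One has $\tilde\ngot_{(\mu_1,\mu_2)} = \ngot_{\mu_1} \oplus \ngot_{\mu_2}$, and because $(\alpha, \mu_i) \geq 0$ for every positive root, a root-space generator $X_\alpha \in (\ggot_\C)_\alpha \subset \ngot_\mu$ is sent by $\overline\rho$ to $(X_\alpha^{(1)}, X_\alpha^{(2)})$ with $X_\alpha^{(i)} = X_\alpha$ if $(\alpha, \mu_i) > 0$ and $0$ otherwise. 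Splitting the positive roots with $(\alpha, \mu) > 0$ into the cases where $(\alpha, \mu_i)$ is positive for exactly one or for both $i$, one sees that the first two cases contribute nothing to the cokernel while the third contributes one copy of $(\ggot_\C)_\alpha$ per root. This gives $\mathbb{W} \simeq \mathbb{W}_{\mu_1,\mu_2}$ and completes the proof.
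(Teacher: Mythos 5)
Your proposal is correct and follows essentially the same route as the paper: apply the first specialization of Theorem \ref{theoreme-B-second} to the diagonal embedding $G\hookrightarrow G\times G$ with $\tilde w=1$, identify $H=G_\mu=G_{\mu_1}\cap G_{\mu_2}$ and $\tilde H=G_{\mu_1}\times G_{\mu_2}$, and compute the cokernel $\tilde\ngot_{\tilde\mu}/\overline\rho(\ngot_\mu)$ as $\mathbb{W}_{\mu_1,\mu_2}$. You merely supply the details (the adaptedness check and the case analysis on the roots) that the paper dismisses as easy, and your computation of the cokernel is accurate.
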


\begin{proof}
If we follows the notation of Theorem \ref{theoreme-B-second}, we have $\tilde{\mu}=(\mu_1,\mu_2)$, 
$\tilde{w}=1$, $\mu_{\tilde{w}}=\mu=\mu_1+\mu_2$, the parabolic subgroups 
$\tilde{\pgot}_{\tilde{w}\tilde{\mu}}, \pgot_{\mu_{\tilde{w}}}$ are respectively equal to 
$\pgot_{\mu_1}\times \pgot_{\mu_2}$ and $\pgot_{\mu_1}\cap \pgot_{\mu_2}$ and the subgroup $\tilde{H}$ is equal to 
$G_{\mu_1}\times G_{\mu_2}$. We check then easily that the $G_\mu$-module 
$\tilde{\ngot}_{\tilde{w}\tilde{\mu}}/\overline{\rho}(\ngot_{\mu_{\tilde{w}}})$ 
is equal to $\mathbb{W}_{\mu_1,\mu_2}$.
\end{proof}

%%%%%%%%%%%%%%%%%%%%%%%%%%%%%%%%%%%%%%%%%%%%%%%%%%%%%%%%%%%%%
\subsection{The Kronecker coefficients}
%%%%%%%%%%%%%%%%%%%%%%%%%%%%%%%%%%%%%%%%%%%%%%%%%%%%%%%%%%%%%

Let $\U(E),\U(F)$ be the unitary groups of two hermitian vector spaces $E,F$. The aim of this section 
is to detail our results for the canonical morphism 
$$
\rho:G:=\U(E)\times\U(F)\to\tilde{G}:=\U(E\otimes F).
$$
This problem is equivalent to the  question on the decomposition of tensor products of 
representations for the symmetric group.

A partition $\lambda$ is a sequence $\lambda = (\lambda _1,\lambda_2,\ldots,\lambda_k)$ of weakly 
decreasing non-negative integers. By convention, we allow partitions with some zero parts, and two 
partitions that differ by zero parts are the same. For any partition $\lambda$, we define 
$\vert\lambda\vert = \lambda_1 +\lambda_2 +\cdots+ \lambda_k$ and $l(\lambda)$ 
as the number of non-zero parts of $\lambda$.

Recall that the the $\U(E)$ irreducible polynomial representations are in bijection with the partitions 
$\lambda$ such that $l(\lambda)\leq \dim E$.  We denote by $S_\lambda(E)$ the representation 
associated to $\lambda$.

We consider the groups $G:=\U(E)\times\U(F)$ and $\tilde{G}:=\U(E\otimes F)$. 
Let $\gamma$ be a partition such that $l(\gamma)\leq \dim E\cdot \dim F$. We can decompose 
the irreducible representation $S_\gamma(E\otimes F)$ as a $G$-representation:
$$
S_\gamma(E\otimes F)=\sum_{\alpha,\beta} \ g(\alpha,\beta,\gamma) \ S_\alpha(E)\otimes S_\gamma(F)
$$
where the sum is taken over partitions $\alpha$, $\beta$ such that 
$\vert\alpha\vert = \vert\beta\vert = \vert\gamma\vert$, $l(\alpha)\leq \dim E$ and $l(\beta)\leq \dim F$.

We fix an orthonormal basis $(e_i)$ for $E$, $(f_j)$ for $F$: let $(e_i\otimes f_j)$ the corresponding orthonormal 
basis of $E\otimes F$. We denote $T_E$ (resp. $T_F$) the maximal tori of $\U(E)$ (resp. $\U(F)$) consisting of 
endomorphism that are diagonal over $(e_i)$ (resp. $(f_j)$). We denote $T=T_E\times T_F$ the maximal torus of $G$. Similarly we denote $\tilde{T}$ the maximal tori of $\tilde{G}$ associated to the endomorphisms that diagonalize the basis $(e_i\otimes f_j)$. At the level of tori, the morphism $\rho$ induces a map
$\rho : T\to \tilde{T}$ sending $((t_i), (s_j))$ to 
$(t_is_j)$. At the level of Lie algebra the map $\rho:\tgot\to\tilde{\tgot}$ is defined by 
$$
\rho(x, y)= (x_i+y_j)_{i,j}
$$
for $x=(x_1,\cdots,x_{\dim E})\in \R^{\dim E}\simeq \mathrm{Lie}(T_E)$ and $y=(y_1,\cdots,y_{\dim F})\in \R^{\dim F}\simeq \mathrm{Lie}(T_F)$. 

Let $\theta_{kl}\in\tilde{\tgot}^*$ be the linear form that send an element $(a_{i,j})\in\tilde{\tgot}$ to $a_{kl}$. 
Then $\tilde{\tgot}^*$ is canonically identified with the vector space of matrices of size $\dim E\times \dim F$ through 
the use of the basis $\theta_{kl}$, and the dual map $\pi :\tilde{\tgot}^*\to \tgot^*$ is given by $\pi((\xi_{ij}))= ((\sum_j \xi_{ij})_i,(\sum_i \xi_{ij})_j)$.

Recall the following definition \cite{Vallejo14,Manivel14}. 

\begin{defi} 
Let $A = (a_{i,j})$ be a matrix of size $\dim E\times \dim F$. Then, A is called {\em additive} if there exist 
real numbers $x_1,\ldots,x_{\dim E}$, $y_1,\ldots,y_{\dim F}$ such that
$$
a_{i,j} > a_{k,l} \Longrightarrow x_i + y_j > x_k + y_l,
$$
for all $i,k\in [1,\ldots, \dim E]$ and all $j, l \in [1,\ldots,\dim F]$. 
\end{defi}

The following easy fact is important.

\begin{lem}
Let $\xi\in\tilde{\tgot}^*$ that is represented by a matrix $(\xi_{ij})$. Then 
$\xi$ is adapted to the group $G$ if and only if the matrix $(\xi_{ij})$ is additive.
\end{lem}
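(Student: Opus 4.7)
The plan is to unwind both definitions in terms of the coordinate description of $\tilde{\tgot}^*$ given by the basis $\theta_{ij}$, and check that they coincide term by term.

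First I would recall the root system: the roots $\tilde{\Rgot}$ of $\tilde{G}=\U(E\otimes F)$ are the linear forms $\theta_{kl}-\theta_{k'l'}$ indexed by pairs $(k,l)\neq(k',l')$. With $\xi\in\tilde{\tgot}^*$ represented by the matrix $(a_{ij})=(\xi_{ij})$, the pairing $(\theta_{kl}-\theta_{k'l'},\xi)$ is simply $a_{kl}-a_{k'l'}$, so the set of positive roots on $\xi$ corresponds exactly to the strict comparisons $a_{kl}>a_{k'l'}$.

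Next I would compute $\pi$ applied to a root. Using the formula $\pi((\xi_{ij}))=((\sum_j\xi_{ij})_i,(\sum_i\xi_{ij})_j)$, the image $\pi(\theta_{kl})$ is the pair $(\esp_k^E,\esp_l^F)$ consisting of the $k$-th coordinate form on $\mathrm{Lie}(T_E)$ and the $l$-th on $\mathrm{Lie}(T_F)$. Hence for $\xi_o=(x,y)\in\tgot^*\simeq\R^{\dim E}\oplus\R^{\dim F}$,
$$
(\pi(\theta_{kl}-\theta_{k'l'}),\xi_o)=(x_k+y_l)-(x_{k'}+y_{l'}).
$$

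Finally I would combine the two computations. The adaptedness of $\xi$ asks for the existence of $\xi_o=(x,y)$ such that for every root $\theta_{kl}-\theta_{k'l'}$ with $a_{kl}>a_{k'l'}$ one has $(x_k+y_l)-(x_{k'}+y_{l'})>0$. This is literally the defining condition for the matrix $A=(a_{ij})$ to be additive. Both implications are immediate once this translation is in place, so the lemma follows.

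There is no real obstacle here: the statement is purely a dictionary between the definition of ``adapted'' specialized to the root system of $\U(E\otimes F)$ and the combinatorial notion of ``additive matrix.'' The only point requiring a bit of care is the correct identification of $\pi$ on the basis vectors $\theta_{kl}$, which follows directly from the given formula for $\pi$.
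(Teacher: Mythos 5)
Your proof is correct and follows essentially the same route as the paper: both arguments identify the roots of $\U(E\otimes F)$ as the forms $\theta_{kl}-\theta_{k'l'}$, compute $(\theta_{kl}-\theta_{k'l'},\xi)=\xi_{kl}-\xi_{k'l'}$ and $(\pi(\theta_{kl}-\theta_{k'l'}),(x,y))=x_k+y_l-(x_{k'}+y_{l'})$, and observe that adaptedness then coincides verbatim with additivity.
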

\begin{proof} The system of roots for $\tilde{G}$ is $\tilde{\Rgot}=\{\theta_{ij}-\theta_{kl}, (i,j)\neq (k,l)\}$.
By definition $\xi\in\tilde{\tgot}^*$ is adapted to $G$ if and only if there exists 
$(x,y)\in\R^{\dim E}\times \R^{\dim F}\simeq \tgot^*$ such that 
$$
(\theta_{ij}-\theta_{kl},\xi)> 0\Longrightarrow (\pi(\theta_{ij}-\theta_{kl}),(x,y)).
$$
Our proof is completed since $(\theta_{ij}-\theta_{kl},\xi)=\xi_{ij}-\xi_{kl}$ and 
$(\pi(\theta_{ij}-\theta_{kl}),(x,y))=x_i + y_j - (x_k + y_l)$.
\end{proof}

\begin{defi}
If $A = (a_{i,j})$ is a matrix of size $\dim E\times \dim F$ with non negative integral coefficients, we define the 
partition $\alpha_A,\beta_A,\gamma_A$ where $\alpha_A\simeq (\sum_j a_{ij})_i$, $\beta_A\simeq 
(\sum_i a_{ij})_j$ and $\gamma_A\simeq (a_{i,j})$. Note that $|\alpha_A|=|\beta_A|=|\gamma_A|$.
\end{defi}

\medskip

The first part of Theorem \ref{theoreme-B-second} permits us to recover the following result of 
Vallejo \cite{Vallejo14} and Manivel \cite{Manivel14}.

\begin{prop}
Let $A = (a_{i,j})$ is a matrix of size $\dim E\times \dim F$ with non negative integral coefficients. 
If the matrix $A$ is {\em additive} then 

$\bullet$ $g(n\alpha_A,n\beta_A,n\gamma_A)=1$ for all $n\geq 1$,

$\bullet$ the sequence $g(a+n\alpha_A,b+n\beta_A,c+n\gamma_A)$ is increasing and stationary 
for any partition $a,b,c$ such that $|a|=|b|=|c|$, $l(a)\leq\dim E$, $l(b)\leq \dim F$ and 
$l(c)\leq \dim E\cdot \dim F$.
\end{prop}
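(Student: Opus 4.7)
The plan is to derive both statements as a direct application of the first version of Theorem \ref{theoreme-B-second} to the morphism $\rho : G = \U(E)\times \U(F) \to \tilde G = \U(E\otimes F)$. The starting observation is that, under the standard dictionary between dominant weights of $\U(n)$ and partitions of length at most $n$, one has the identification $g(\alpha,\beta,\gamma) = \mm((\alpha,\beta),\gamma)$, since both count the multiplicity of $S_\alpha(E)\otimes S_\beta(F)$ in the restriction $S_\gamma(E\otimes F)\vert_{G}$.

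Given an additive matrix $A$ with non-negative integer entries, let $\tilde\mu \in \tilde\Lambda_{\geq 0}$ be the dominant weight whose entries along the basis $(e_i\otimes f_j)$ are those of $A$ sorted in decreasing order, and let $\tilde w \in \tilde W$ be a permutation of the basis such that $\tilde w\tilde\mu$ equals the element of $\tilde{\tgot}^*$ represented by the matrix $A$. By the lemma proved just above, the additivity of $A$ amounts exactly to $\tilde w\tilde\mu$ being adapted to $G$ in the sense of Definition \ref{def:lambda-adapted}. Using the explicit formula $\pi((\xi_{ij})) = ((\sum_j \xi_{ij})_i,(\sum_i \xi_{ij})_j)$, the image $\pi(\tilde w\tilde\mu)$ is the pair formed by the row and column sums of $A$; after conjugating by an element of $W_G = S_{\dim E}\times S_{\dim F}$ to rearrange each coordinate in decreasing order, we obtain $\mu_{\tilde w} = (\alpha_A, \beta_A)$, which is a pair of partitions of length at most $\dim E$ and $\dim F$ respectively.

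With the hypotheses of Theorem \ref{theoreme-B-second} now verified for $(\tilde\mu,\tilde w)$, its first conclusion yields $\mm(n\mu_{\tilde w}, n\tilde\mu) = 1$ for all $n \geq 1$, which under the identification above reads $g(n\alpha_A, n\beta_A, n\gamma_A) = 1$. Its second conclusion, applied with $(\lambda,\tilde\lambda) = ((a,b), c)$ satisfying the length conditions in the proposition, asserts that the sequence $\mm(\lambda + n\mu_{\tilde w}, \tilde\lambda + n\tilde\mu)$ is increasing and eventually constant, which is precisely the claimed statement about $g(a + n\alpha_A, b + n\beta_A, c + n\gamma_A)$. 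The only delicate point is the bookkeeping involved in transferring the Weyl group conjugations on weights into reorderings of partitions; once this is checked, the result is a formal consequence of the already established theorem.
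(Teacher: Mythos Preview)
Your proposal is correct and follows exactly the approach indicated in the paper: the proposition is stated as a direct application of the first version of Theorem~\ref{theoreme-B-second}, using the lemma that an additive matrix $A$ is precisely an element of $\tilde\tgot^*$ adapted to $G$, together with the identification of $\pi(A)$ with the row and column sums $(\alpha_A,\beta_A)$. The paper gives no further argument beyond this, so your write-up simply makes explicit the bookkeeping (choice of $\tilde w$, Weyl-group reordering into partitions) that the paper leaves to the reader.
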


\medskip

Now we want to exploit the second part of Theorem \ref{theoreme-B-second} that concerns a formula for the limit multiplicities. 

\begin{defi} Let $A = (a_{i,j})$ is an {\em additive} matrix of size  $\dim E\times \dim F$ 
with non negative integral coefficients. For any partition $a,b,c$ such that $|a|=|b|=|c|$, we define 
$g_A(a,b,c)\in \mathbb{N}$ as the limit of the sequence $g(a+n\alpha_A,b+n\beta_A,c+n\gamma_A)$ when $n\to\infty$.
\end{defi}
%
%
%\begin{lem}
%Let $A = (a_{i,j})$ is a {\em additive} matrix of size $\dim E\times \dim F$ with non negative integral coefficients. 
%Up to a change in the lines and colums of $A$ we can suppose that $\pi(A)=(\alpha_A,\beta_A)$. We note 
%$\alpha_A=(\alpha_1\geq \alpha_2\geq \ldots\geq \alpha_{\dim E})$ and 
%$\beta_A=(\beta_1\geq \beta_2\geq \ldots\geq \beta_{\dim F})$. For any $i,k\in\{1,\ldots,\dim E\}$ and 
%$k,l\in\{1,\ldots,\dim F\}$ we have
%$$
%\alpha_i\geq\alpha_k  \Longrightarrow \ a_{il}\geq a_{kl},\, \forall l
%\quad {\rm and}\quad 
%\beta_j\geq \beta_l\ \Longrightarrow \ a_{ij}\geq a_{il} ,\, \forall i.
%$$
%\end{lem}
%
%
%\begin{proof}
%Since $A$ is additive, there exist sequences $(x_i),(y_j)$ such that 
%\begin{equation}\label{eq:A-additif}
%a_{ij}> a_{kl}\Longrightarrow x_i+y_j>x_k + y_l.
%\end{equation}
%
%First we check that 
%\begin{equation}\label{eq:alpha}
%\alpha_i\geq \alpha_k\Longrightarrow (x_i> x_k) \ \ or \ \  (a_{is}=a_{ks}, \forall s).
%\end{equation}
%
%If $x_k< x_j$ then the transpose of (\ref{eq:A-additif}) gives $a_{kl}\leq a_{il},\forall l$. Finally we have proved that 
%$\alpha_i\geq\alpha_k  \Longrightarrow \ a_{il}\geq a_{kl},\, \forall l$. The same proof works for the second implication.
%\end{proof}

We denote $E_i^k$ (resp. $F_j^l$) the orthogonal projection of rank $1$ of $E$ (resp. $F$) that sends 
$e_i$  to $e_k$ (resp. $f_j$ to $f_l$). At an additive matrix $A$, we attach :

$\bullet$ The stabilizer $\tilde{H}_A\subset \tilde{G}$ of the element $A\in\tilde{\tgot^*}$, with Lie algebra $\tilde{\hgot}_A$.

$\bullet$ The stabilizer $H_A\subset G$ of the element $\pi(A)$. We have 
$H_A=H_A^E\times H_A^F$ with $H_A^E=\U(E)_{\alpha_A}$ and $H_A^F=\U(F)_{\beta_A}$.

$\bullet$ The $\tilde{H}_A$-module 
$$
\tilde{\pgot}_A:=\sum_{a_{ij}\geq a_{kl}} \C \, E_i^k\otimes F_j^l
$$
that corresponds to the parabolic  sub-algebra of $\tilde{\ggot}_\C$ attached to $A$. Its nilradical is 
$\tilde{\ngot}_A=\sum_{a_{ij}>a_{kl}} \C \, E_i^k\otimes F_j^l$.

$\bullet$ the sub-algebras $\ngot_{\pi(A)}\subset\pgot_{\pi(A)}\subset \ggot_\C$ and their images by $\rho$:
\begin{eqnarray*}
\rho(\pgot_{\pi(A)})&=&\sum_{\alpha_{i}\geq \alpha_k} 
\C\,E_{i}^{k}\otimes \mathrm{Id}_F\oplus \sum_{\beta_{j}\geq \beta_l}  \C\,\mathrm{Id}_E\otimes F_{j}^{l}\\
\rho(\ngot_{\pi(A)})&=&\sum_{\alpha_{i}> \alpha_k} 
\C\,E_{i}^{k}\otimes \mathrm{Id}_F\oplus \sum_{\beta_{j} >\beta_l}  \C\,\mathrm{Id}_E\otimes F_{j}^{l}
\end{eqnarray*}

Thanks to proposition \ref{prop:xi-adapted} we know that $\rho(H_A)\subset \tilde{H}_A$ 
and that $\rho(\pgot_{\pi(A)})\subset\tilde{\pgot}_A$. We denote 
$\overline{\rho}(\ngot_{\pi(A)})$ the projection of $\rho(\ngot_{\pi(A)})\subset\tilde{\pgot}_A$ on 
$\tilde{\pgot}_A/(\tilde{\hgot}_A)_\C\simeq\tilde{\ngot}_A$.

We define the $H_A$-module 
\begin{equation}\label{eq:W-A}
\mathbb{W}_A=\tilde{\ngot}_A/\overline{\rho}(\ngot_{\pi(A)})
\end{equation}
and we know that $\Sym(\mathbb{W}_A^*)$ has finite $H_A$-multiplicities.

For a partition $a=(a_1,a_2,\ldots,a_{\dim E})$, we define $V^{H^E_A}_a$ as the irreducible representation of $H_A^E$ 
with highest weight $a$. If $\alpha_A=(l_1^{n_1},l_2^{n_2},\ldots, l_r^{n_r})$ with $l_1>l_2>\cdots>l_r$, the 
subgroup $H^E_A$ is isomorphic to $\U(E_1)\times\cdots \times \U(E_r)$ with $\dim E_k= n_k$, and the 
representation $V^{H^E_A}_a$ is equal to the tensor product 
$S_{a[1]}(E_1)\otimes S_{a[2]}(E_r)\otimes\cdots \otimes S_{a[r]}(E_r)$ where $a[k]$ is the partition 
$(a_{n_1+\ldots+ n_r+1},\ldots, a_{n_1+\ldots+ n_{r+1}})$. 

We can define similarly the representations $V^{\tilde{H}_A}_c$ and $V^{H^F_A}_b$. Theorem \ref{theoreme-B-second} give us the following

\begin{theo}\label{theo-stretched}
Let $A = (a_{i,j})$ is a {\em additive} matrix of size $\dim E\times \dim F$ with non negative integral coefficients.
For any partition $a,b,c$ such that $|a|=|b|=|c|$, $l(a)\leq\dim E$, $l(b)\leq \dim F$ and 
$l(c)\leq \dim E\cdot \dim F$, we have
$$
g_A(a,b,c)= \dim\left[\Sym(\mathbb{W}_A^*)\otimes 
(V^{H^E_A}_a)^*\otimes (V^{H^F_A}_b)^*\otimes V^{\tilde{H}_A}_c\vert_{H_A^E\times H_A^F}\right]^{H_A^E\times H_A^F}
$$
\end{theo}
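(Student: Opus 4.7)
The plan is to obtain Theorem \ref{theo-stretched} as a direct specialization of the first version of Theorem \ref{theoreme-B-second} (the ``adapted'' one) to the Kronecker setup. Everything rests on three identifications: the adapted element, the Weyl group shift, and the computation of the $H_A$-module $\mathbb{W}$.

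First I would set $\tilde{\mu} := \gamma_A$, the dominant weight of $\tilde{G}=\U(E\otimes F)$ obtained by reading the entries of $A$ in weakly decreasing order along the basis $(e_i\otimes f_j)$. There exists a Weyl group element $\tilde{w}\in\tilde{W}$ permuting that basis so that $\tilde{w}\tilde{\mu}=A$ as an element of $\tilde{\tgot}^*$. By the Lemma preceding the definition of $g_A$, $A$ is adapted to $G$ iff $A$ is additive as a matrix, so our hypothesis ensures that $\tilde{w}\tilde{\mu}$ is adapted in the sense of Definition \ref{def:lambda-adapted}. Up to conjugating by an element of the Weyl group $W_G$ (which amounts to reordering the bases $(e_i)$, $(f_j)$ and preserves additivity), I may assume that $\pi(A)=(\alpha_A,\beta_A)$ is already a dominant weight of $G=\U(E)\times\U(F)$, so that $\mu_{\tilde{w}}=(\alpha_A,\beta_A)$.

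Next I would apply Theorem \ref{theoreme-B-second} with $\lambda=(a,b)$ and $\tilde{\lambda}=c$. It yields $\mm(n\mu_{\tilde{w}},n\tilde{\mu})=g(n\alpha_A,n\beta_A,n\gamma_A)=1$ for all $n\ge 1$, and that the sequence
\begin{equation*}
g(a+n\alpha_A,b+n\beta_A,c+n\gamma_A)=\mm((a,b)+n(\alpha_A,\beta_A),\,c+n\gamma_A)
\end{equation*}
is increasing and, for $n$ sufficiently large, equal to $\dim\bigl[\Sym(\mathbb{W}^*)\otimes V_c^{\tilde{H}_A}\vert_{H_A}\otimes (V_{(a,b)}^{H_A})^*\bigr]^{H_A}$, where $\mathbb{W}=\tilde{\ngot}_A/\overline{\rho}(\ngot_{\pi(A)})$. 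By the very definition of $\mathbb{W}_A$ in \eqref{eq:W-A}, this is exactly $\mathbb{W}_A$, and so the limit $g_A(a,b,c)$ equals the displayed dimension.

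Finally, I would use the decomposition $H_A=H_A^E\times H_A^F$ together with $V_{(a,b)}^{H_A}=V_a^{H_A^E}\otimes V_b^{H_A^F}$. Substituting this into the formula above gives precisely the claimed expression
\begin{equation*}
g_A(a,b,c)=\dim\left[\Sym(\mathbb{W}_A^*)\otimes (V_a^{H_A^E})^*\otimes (V_b^{H_A^F})^*\otimes V_c^{\tilde{H}_A}\vert_{H_A^E\times H_A^F}\right]^{H_A^E\times H_A^F}.
\end{equation*}
The substantive work has already been done in Proposition \ref{prop:xi-adapted} and in Theorem \ref{theoreme-B-second}; the main (but purely bookkeeping) obstacle here is to check that the $H_A$-module $\tilde{\ngot}_{\tilde{w}\tilde{\mu}}/\overline{\rho}(\ngot_{\mu_{\tilde{w}}})$ produced by Theorem \ref{theoreme-B-second} coincides on the nose with the $\mathbb{W}_A$ described concretely by the matrix inequalities $a_{ij}>a_{kl}$ versus $\alpha_i>\alpha_k$, $\beta_j>\beta_l$. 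This is immediate from the explicit descriptions of $\tilde{\ngot}_A$, $\rho(\pgot_{\pi(A)})$ and $\rho(\ngot_{\pi(A)})$ given just before \eqref{eq:W-A}.
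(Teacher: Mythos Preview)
Your proposal is correct and follows exactly the paper's approach: the paper simply states that Theorem~\ref{theo-stretched} is given by Theorem~\ref{theoreme-B-second} (the adapted version), and you have accurately spelled out the dictionary $\tilde{w}\tilde{\mu}=A$, $\mu_{\tilde{w}}=(\alpha_A,\beta_A)$, $H=H_A^E\times H_A^F$, $\tilde{H}=\tilde{H}_A$, $\mathbb{W}=\mathbb{W}_A$, together with the splitting $V_{(a,b)}^{H_A}=V_a^{H_A^E}\otimes V_b^{H_A^F}$. There is nothing to add.
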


\medskip

%%%%%%%%%%%%%%%%%%%%%%%%%%%%%%%%%%%%%%%%%%%%%%%
\subsubsection{The partition $(1^{pq})$}
%%%%%%%%%%%%%%%%%%%%%%%%%%%%%%%%%%%%%%%%%%%%%%%

Let us work out the example of the partition $A=(1^{pq})$ where $1\leq p \leq \dim E$ and $1\leq q\leq\dim F$. 

We see $A=(1^{pq})$ as an additive matrix $(a_{ij})$ of type $\dim E\times \dim F$: 
$a_{ij}$ is non-zero, equal to $1$, only if $1\leq i \leq p$ and $1\leq j\leq q$. We denote $g_{pq}$ 
the corresponding stretched Kronecker coefficients. 

We use an orthogonal decomposition of our vector spaces : $E=E_p\oplus E'$ and $F=F_q\oplus F'$ with 
$\dim E_p=p$ and $\dim F_q=q$. For the tensor product we have 
$E\otimes F= E_p\otimes F_q\oplus (E_p\otimes F_q)^\perp$ where $(E_p\otimes F_q)^\perp= 
E_p\otimes F' \oplus E'\otimes F_q\oplus E'\otimes F'$. 

The stabiliser subgroup of $A$ in $\tilde{G}$ is $\tilde{H}_{pq}:= 
\U(E_p\otimes F_q)\times  \U((E_p\otimes F_q)^\perp)$ and 
the stabiliser subgroup of $\pi(A)$ in $G$ is $H_{pq}:= H^E_p\times H^F_q$ where 
$H^E_p= \U(E_p)\times \U(E')$ and $H^F_q= \U(F_q)\times \U(F')$.

If $A=(1^{pq})$, we denote $\mathbb{W}_A=\mathbb{W}_{pq}$ the $H_{pq}$-module introduced in  (\ref{eq:W-A}). 
A direct computation shows that
\begin{eqnarray*}
\lefteqn{\mathbb{W}_{pq}=\hom(E_p,E')\otimes \slgot(F_q) \bigoplus}\\
&&  \slgot(F_q)\otimes \hom(F_q,F')\bigoplus \hom(E_p,E')\otimes \hom(F_q,F').
\end{eqnarray*}

A partition $a=(a_1,\ldots,a_{\dim E})$ defines the partitions $a(p):=(a_1,\ldots,a_{p})$ and 
$a':=(a_{p+1},\ldots,a_{\dim E})$. Similarly a partition $b=(b_1,\ldots,b_{\dim F})$ defines 
the partitions $b(q):=(b_1,\ldots,b_{q})$ and $b':=(a_{q+1},\ldots,a_{\dim F})$.

A partition $c$ of length $\dim E\times \dim F$ is represented by a matrix $(c_{ij})$. We define then the partition
$c(pq)$ of length $pq$ represented by the coefficients $c_{ij}$ when $1\leq i\leq p$ and $1\leq j\leq q$, and the partition 
$c'$ which is the complement of $c(pq)$ in $c$. 

Theorem \ref{theo-stretched} tell us that the stretched Kronecker coefficient $g_{pq}(a,b,c)$ is equal to the multiplicity of 
the irreducible representation 
$$
S_{a(p)}(E_p)\otimes S_{a'}(E')\otimes S_{b(q)}(F_q)\otimes S_{b'}(F')
$$
in 
$$
\Sym(\mathbb{W}_{pq}^*)\otimes S_{c(pq)}(E_p\otimes F_q)\otimes S_{c'}((E_p\otimes F_q)^\perp).
$$

When $q=1$ the following expression for the stretched coefficient was obtained by Manivel 
\cite{Manivel14}, extending the case $p=q=1$ treated by Brion \cite{Brion93}.

%%%%%%%%%%%%%%%%%%%%%%%%%%%%%%%%%%%%%%%%%%%%%%%
\subsubsection{The triple $(22), (22), (22)$}
%%%%%%%%%%%%%%%%%%%%%%%%%%%%%%%%%%%%%%%%%%%%%%%

The aim of this section is to explain how our technique permit us to recover the result of 
Stembridge \cite{Stembridge14} concerning the stability of the triple $(22),(22),(22)$. 

We work with the morphism $\rho :\U(\C^2)\times \U(\C^2)\to \U(\C^2\otimes\C^2)$. 
The matrix
$$
\tilde{\lambda}:=i\left(\begin{array}{cc} 
1 & 0 \\ 
0 & 1 
\end{array}\right).
$$
represents a weight of the maximal torus $\tilde{T}$ of $\tilde{G}=\U(\C^2\otimes\C^2)$. Let 
$\chi_{\tilde{\lambda}}$ be the character defined by $\tilde{\lambda}$ on the stabilizer subgroup of $\tilde{G}_{\tilde{\lambda}}$.

The restriction of $\tilde{\lambda}$ to the maximal torus $T$ of $G=\U(\C^2)\times \U(\C^2)$ defines a weight $\lambda=\pi(\tilde{\lambda})$. We see that $\lambda$ is the differential of the character 
$\chi_\lambda:=\det\times\det$. 

The Kronecker coefficient $g(n(1,1),n(1,1),n(1,1))$ correspond to the multiplicity of the character 
$\chi_\lambda^{\otimes n}$ in $V^{\tilde{G}}_{n\tilde{\lambda}}$. Let us check that the sequence  
$g(n(1,1),n(1,1),n(1,1))$ is bounded.

The subgroup of $G$ that stabilizes $\tilde{\lambda}$ is denoted $H:=G\cap \tilde{G}_{\tilde{\lambda}}$. 
Let $H^o$ be its connected component. We consider the following $H$-module 
$\mathbb{W}:=\T_{\tilde{\lambda}}(\tilde{G}\tilde{\lambda})/\ggot_\C\cdot\tilde{\lambda}$.

\begin{lem}
\begin{enumerate}
\item The $H$-module $\mathbb{W}$ is reduced to $\{0\}$. 
\item The reduced space $(\tilde{G}\tilde{\lambda})_\lambda$ is a singleton.
\item The character $\chi_{\tilde{\lambda}}\chi_\lambda^{-1}$ is trivial on $H^o$ and 
defines an isomorphism between $H/H^o$ and $\{\pm 1\}$.
\item $g(n(1,1),n(1,1),n(1,1))=\frac{1+(-1)^n}{2}$.
\end{enumerate}

\end{lem}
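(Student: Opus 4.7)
The plan is to deduce (1)--(3) by explicit computations specific to this $\U(2)\times\U(2)\to\U(4)$ setting, and then derive (4) by direct application of Theorem B. The four parts form a chain: (1) yields (2) via Proposition \ref{prop:equivalences}; (3) identifies the one-dimensional $H$-representation $\mathbb{D}$ entering the formula of Theorem B; and (4) is then a one-line consequence.

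For (1), the orbit $\tilde{G}\tilde{\lambda}$ is the Grassmannian of $2$-planes in $\C^2\otimes\C^2$, so $\T_{\tilde{\lambda}}(\tilde{G}\tilde{\lambda})$ has complex dimension $4$. The stabilizer $\ggot_\C^{\tilde{\lambda}}$ consists of pairs $(X,Y)\in\glgot_2\times\glgot_2$ with $X\otimes I+I\otimes Y$ commuting with $\tilde{\lambda}$; a short check, asking that this operator preserve the eigenspaces of $\tilde{\lambda}$, forces $X$ and $Y$ to be diagonal, so $\ggot_\C^{\tilde{\lambda}}$ is $4$-dimensional and $\ggot_\C\cdot\tilde{\lambda}$ has complex dimension $4$ as well. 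Surjectivity of the inclusion $\ggot_\C\cdot\tilde{\lambda}\hookrightarrow \T_{\tilde{\lambda}}(\tilde{G}\tilde{\lambda})$ is then confirmed by pushing the generators $E_{12}\otimes I$, $E_{21}\otimes I$, $I\otimes F_{12}$, $I\otimes F_{21}$ into $\tilde{\ggot}_\C/\tilde{\pgot}_{\tilde{\lambda}}$ and observing that their images occupy the four distinct negative root spaces at $\tilde{\lambda}$. Part (2) is then immediate, since $\mathbb{W}=0$ makes $\Sym(\mathbb{W}^*)=\C$ trivially of finite $H$-multiplicity, so Proposition \ref{prop:equivalences} applies.

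For (3), $H=G\cap \tilde{G}_{\tilde{\lambda}}$ consists of those $(u,v)\in \U(2)\times \U(2)$ for which $u\otimes v$ preserves the range $V_1=\mathrm{span}(e_1\otimes f_1,e_2\otimes f_2)$ of $\tilde{\lambda}$. A short case analysis on $(u\otimes v)(e_1\otimes f_1)$ shows that this forces $u$ and $v$ to be simultaneously diagonal or simultaneously anti-diagonal, yielding two connected components: $H^o=T=T_E\times T_F$ and a coset $H_{\mathrm{anti}}$. The character $\chi_{\tilde{\lambda}}$ extends to $\tilde{G}_{\tilde{\lambda}}=\U(V_1)\times \U(V_1^\perp)$ as $(U_1,U_2)\mapsto \det(U_1)$. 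In the diagonal case one computes $(u\otimes v)\vert_{V_1}=\mathrm{diag}(a_1 b_1, a_2 b_2)$, whose determinant equals $\chi_\lambda(u,v)=\det(u)\det(v)$; in the anti-diagonal case the restriction $(u\otimes v)\vert_{V_1}$ is itself anti-diagonal and its determinant picks up an extra minus sign, giving $\chi_{\tilde{\lambda}}\chi_\lambda^{-1}=-1$ on $H_{\mathrm{anti}}$. This yields the claimed isomorphism $H/H^o\simeq\{\pm 1\}$.

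For (4), apply the first bullet of Theorem B with the data supplied by (2) and (3): $\mm(n\lambda, n\tilde{\lambda})=\dim [\mathbb{D}^{\otimes n}]^H$ where $\mathbb{D}=\chi_{\tilde{\lambda}}\chi_\lambda^{-1}\vert_H$. By (3), $\mathbb{D}^{\otimes n}$ is trivial on $H^o$ and equals $(-1)^n$ on $H_{\mathrm{anti}}$, so the space of $H$-invariants has dimension $\tfrac{1+(-1)^n}{2}$. The main technical step is really (1): since $\tilde{\lambda}$ is not $G$-adapted in the sense of Definition \ref{def:lambda-adapted} (the additivity inequalities arising from the off-diagonal entries of $\tilde{\lambda}$ are incompatible), Proposition \ref{prop:xi-adapted} is unavailable, and tangential surjectivity must be verified by the direct root-space computation above.
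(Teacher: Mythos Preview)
Your proof is correct and follows essentially the same approach as the paper: a dimension count for (1), Proposition \ref{prop:equivalences} for (2), and Theorem B for (4). The paper's argument for (1) is slightly more streamlined (it stops once $\dim \ggot_\C\cdot\tilde\lambda=\dim \T_{\tilde\lambda}(\tilde G\tilde\lambda)$, without the explicit root-space surjectivity check, which is redundant once equal dimensions are known), and it leaves the verification of (3) entirely to the reader, whereas you carry it out in full; your observation that $\tilde\lambda$ is not $G$-adapted, so Proposition \ref{prop:xi-adapted} is unavailable, is a useful clarification the paper does not make explicit.
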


\begin{proof}
If we compute the real dimensions we have $\dim \tilde{G}\tilde{\lambda}=\dim \U(4)- 2\dim \U(2)=8$. On the 
other hand, $\dim \ggot_\C\cdot\tilde{\lambda}=2\dim \ggot\cdot\tilde{\lambda}=2(\dim G-\dim H)$. But 
one can compute easily that $H^o=T$. Hence $\dim H= 4$ and $\dim \ggot_\C\cdot\tilde{\lambda}=\dim \tilde{G}\tilde{\lambda}$. It proves the first point.

The second point is a consequence of the first point (see Proposition \ref{prop:theorem-B}). At this stage we know that 
$$
g(n(1,1),n(1,1),n(1,1))=\dim [(\chi_{\tilde{\lambda}}\chi_\lambda^{-1})^{\otimes n}]^H.
$$
The last point is a consequence of the third one. The easy checking of the third point is left to the reader.
\end{proof}

%%%%%%%%%%%%%%%%%%%%%%%%%%%%%%%%%%%%%%%%%%%%%%%%%%%%%%%%%%%%%
\subsection{Plethysm}
%%%%%%%%%%%%%%%%%%%%%%%%%%%%%%%%%%%%%%%%%%%%%%%%%%%%%%%%%%%%%

Let $\rho: G\to \tilde{G}:=\U(V)$ be an irreducible representation of the group $G$. Let $N=\dim V$. 
Let $T$ be a maximal torus of $G$. The $T$-action on $V$ can be diagonalized: there exists 
an orthonormal basis $(v_j)_{j\in J}$  and a family of 
weights $(\alpha_j)_{j\in J}$ such that $\rho(t)v_j= t^{\alpha_j}v_j$ for all $t\in T$. Let $\tilde{T}$ be the 
maximal torus of $\tilde{G}$ formed by the the unitary endomorphisms that are diagonalized by the basis 
$(v_j)_{j\in J}$: we have then $\rho(T)\subset \tilde{T}$. We denote $\pi:\tilde{\tgot}^*\to\tgot^*$ 
the projection, and $e_k\in\tilde{\tgot}^*$ the linear form that sends 
$(x_j)_{j\in J}$ to $x_k$.

Let $B$ be a Borel subgroup of $G$: there exists a Borel subgroup $\tilde{B}\subset \tilde{G}$ 
such that $\rho(B)\subset \tilde{B}$. We work with the set of dominant weights $\tilde{\Lambda}_{\geq 0}$, 
$\Lambda_{\geq 0}$ defined by this choice: the Borel subgroup $\tilde{B}$ fix an ordering $>$ on 
the elements of $J$, and a weight $\tilde{\xi}=\sum_{j\in J} a_j e_j$ belongs to $\tilde{\Lambda}_{\geq 0}$ only if 
$j>k\Longrightarrow a_j\geq a_k$. For simplicity we write $J=\{1,\ldots,N\}$ with the canonical ordering.

For the remaining part of this section we work with a fixed partition $\sigma=(\sigma_1,\sigma_2,\ldots,\sigma_N)$, 
and we denote $S_\sigma(V)$ the corresponding irreducible representation of $\U(V)$. We can represent $\sigma$ by the element $\sum_{j=1}^N \sigma_j e_j\in \tilde{\tgot}^*$ (that we still denote $\sigma$). Let $\mu=\pi(\sigma)=\sum_{j=1} \sigma_j\alpha_j \in \Lambda_{\geq 0}$.

Let $\{0=j_0>j_2>\cdots >j_p=N\}$ be the set of element $j\in [0,\ldots, N]$ such that $\sigma_{j+1}>\sigma_j$ 
or $j\in \{0,N\}$. We have an orthogonal decomposition $V=\oplus_{k=1}^{p} V_{[k]}$ where 
$V_{[k]}$ is the vector space generated by the $v_j$ for $j\in [j_{k-1}+1,\ldots,j_k]$. The nilradical 
$\tilde{\ngot}_\sigma$ of the parabolic subgroup $\tilde{\pgot}_\sigma\subset \glgot(V)$ 
corresponds to the set of endomorphisms $f$ such that $f(V_{[k]})\subset \oplus_{j<k} V_{[j]}$.

The following Lemma is proved in \cite{Montagard96}

\begin{lem}
Let $\ngot_\mu$ the nilradical of the parabolic subgroup $\pgot_\mu\subset\ggot_\C$. The morphism 
$d\rho:\ggot_\C\to \glgot(V)$ defines an injective map from $\ngot_\mu$ into $\tilde{\ngot}_\sigma$.
\end{lem}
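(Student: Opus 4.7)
Following the convention established in the paragraph preceding Theorem~\ref{theoreme-B-second}, the lemma should be read as the assertion that $d\rho$ induces a well-defined linear map $\overline{d\rho}:\ngot_\mu\to\tilde{\ngot}_\sigma$, and that this induced map is injective. Well-definedness is immediate from the preceding unnumbered lemma: $d\rho(\pgot_\mu)\subset\tilde{\pgot}_\sigma$ and, via $\rho(G_\mu)\subset\tilde{G}_\sigma$ and complexification, $d\rho(\hgot_\mu)\subset\tilde{\hgot}_\sigma$ where $\hgot_\mu=(\ggot_\mu)_\C$ and $\tilde{\hgot}_\sigma=(\tilde{\ggot}_\sigma)_\C$. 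Thus $d\rho$ descends to the Levi quotients $\pgot_\mu/\hgot_\mu\cong\ngot_\mu\to\tilde{\pgot}_\sigma/\tilde{\hgot}_\sigma\cong\tilde{\ngot}_\sigma$.

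\textbf{Setup for injectivity.} Suppose $X\in\ngot_\mu$ satisfies $\overline{d\rho}(X)=0$, equivalently $d\rho(X)\in\tilde{\hgot}_\sigma$. Decompose $X=\sum_\beta X_\beta$ into root vectors $X_\beta\in(\ggot_\C)_\beta$ with $(\beta,\mu)>0$. By $T$-equivariance, $d\rho(X_\beta)=\sum_{\pi(\alpha)=\beta}Y_\alpha$ with $Y_\alpha\in(\tilde{\ggot}_\C)_\alpha$. Since $\tilde{\hgot}_\sigma$ contains the root space $(\tilde\ggot_\C)_\alpha$ only if $(\alpha,\sigma)=0$, the hypothesis $d\rho(X_\beta)\in\tilde{\hgot}_\sigma$ forces $(\alpha,\sigma)=0$ for every $\alpha$ with $Y_\alpha\neq 0$.

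\textbf{The main obstacle and its resolution.} A contradiction comes from two ingredients. First, the Heckman-type inclusion $\{\beta\in\Rgot:(\beta,\mu)>0\}\subset\pi(\{\alpha\in\tilde\Rgot:(\alpha,\sigma)>0\})$ (of the kind used in the proof of Proposition~\ref{prop:xi-adapted}, specifically~(\ref{eq:beta-3})) supplies at least one $\alpha_0\in\tilde\Rgot$ with $\pi(\alpha_0)=\beta$ and $(\alpha_0,\sigma)>0$. Second, the compatibility $\rho(B)\subset\tilde{B}$ together with the irreducibility of $V$ should ensure that $Y_{\alpha_0}\neq 0$ in the decomposition of $d\rho(X_\beta)$: concretely, one traces the action of the raising operator $X_\beta$ on the $B$-stable flag of $V$, which (by $\rho(B)\subset\tilde{B}$) refines the $\sigma$-block filtration $V=\bigoplus_kV_{[k]}$, and verifies that at least one matrix coefficient $E_{jk}$ with $\alpha_j-\alpha_k=\beta$ strictly crosses a $\sigma$-block boundary. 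Establishing this nonvanishing is the subtle part of the argument and will be the main technical obstacle; once in hand, the contradiction with $(\alpha_0,\sigma)=0$ completes the injectivity proof.
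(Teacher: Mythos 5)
The paper offers no proof of this lemma at all: it is quoted from Montagard \cite{Montagard96}. So there is nothing internal to compare against, and your proposal has to stand on its own. Two things you do get right. First, your reinterpretation of the statement as concerning the \emph{induced} map $\overline{\rho}:\ngot_\mu\to\tilde{\pgot}_\sigma/\tilde{\hgot}_\sigma\simeq\tilde{\ngot}_\sigma$ is not just consistent with Proposition \ref{prop:xi-adapted}, it is forced: for $G=SU(2)$, $V=\Sym^2\C^2$ with weights $2,0,-2$ and $\sigma=(1,1,0)$, one has $\mu=2$ and $\ngot_\mu=\C X_\beta$, but $d\rho(X_\beta)$ has a nonzero entry $E_{12}$ lying in the Levi block $\{1,2\}$, so the literal inclusion $d\rho(\ngot_\mu)\subset\tilde{\ngot}_\sigma$ fails. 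Second, the reduction to a single root vector $X_\beta$ by $T$-equivariance is correct, since distinct $\beta$'s contribute to disjoint sets of $\tilde{G}$-root spaces.

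However, the proof is not complete: everything rests on showing that for $(\beta,\mu)>0$ the component of $d\rho(X_\beta)$ in $\tilde{\ngot}_\sigma$ is nonzero, and you explicitly leave this open ("the subtle part"). Heckman's inclusion (\ref{eq:beta-3}) cannot close this gap even in principle: it produces \emph{some} $\alpha_0\in\tilde{\Rgot}$ with $\pi(\alpha_0)=\beta$ and $(\alpha_0,\sigma)>0$, but gives no control over whether the coefficient $Y_{\alpha_0}$ of $d\rho(X_\beta)$ on that particular root space is nonzero; a priori all the nonzero coefficients could sit on roots $\alpha$ with $(\alpha,\sigma)=0$, and "tracing the raising operator through the flag" is exactly the assertion to be proved. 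Here is an argument that does close it. Normalize $X_{-\beta}$ so that $(X_\beta,\beta^\vee,X_{-\beta})$ is an $\slgot(2)$-triple, put $e=d\rho(X_\beta)$, $f=d\rho(X_{-\beta})$, and let $D_\sigma=\sum_j\sigma_jE_{jj}$. Since $d\rho(H)v_j=\langle\alpha_j,H\rangle v_j$, one has
$$
\langle\mu,\beta^\vee\rangle=\sum_j\sigma_j\langle\alpha_j,\beta^\vee\rangle
=\tr\bigl(D_\sigma\, d\rho(\beta^\vee)\bigr)=\tr\bigl(D_\sigma[e,f]\bigr)=\tr\bigl([D_\sigma,e]f\bigr)
=\sum_{j<k}(\sigma_j-\sigma_k)\,e_{jk}f_{kj},
$$
where $e=\sum_{j<k}e_{jk}E_{jk}$ and $f=\sum_{j<k}f_{kj}E_{kj}$ (upper, resp.\ lower, triangularity coming from $\rho(B)\subset\tilde{B}$). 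If the $\tilde{\ngot}_\sigma$-component of $e$ vanished, every nonzero $e_{jk}$ would have $\sigma_j=\sigma_k$ and the right-hand side would be $0$, contradicting $(\beta,\mu)>0$. This shows each $\beta$-component of $X\in\ngot_\mu$ has nonzero image in $\tilde{\ngot}_\sigma$, and combined with your disjointness-of-supports observation it yields injectivity of $\overline{\rho}$. With this insertion your proof would be complete; as written, the central claim is asserted rather than proved.
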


We define $\mathbb{W}_\sigma$ as the quotient $\tilde{\ngot}_\sigma/\rho(\ngot_\mu)$. Recall that the 
image by $\rho$ of the stabiliser subgroup $G_\mu$ is contained in the stabilizer subgroup of $\sigma$: 
hence $\mathbb{W}_\sigma$ is a $G_\mu$-module.

For any partition 
$\theta=(\theta_1,\ldots,\theta_N)$, we associate the partition of length $\dim V_{[k]}$,  
$\theta_{[k]}:=(\theta_{j_{k-1}+1},\ldots, V_{j_k})$, and the irreducible representation 
$S_{\theta_{[k]}}(V_{[k]})$ of the unitary group $\U(V_{[k]})$.

For any partition $\theta$ of length $N$ and any dominant weight of $\lambda\in \Lambda_{\geq 0}$ we denote
$$
\left[V^G_{\lambda+n\mu}:S_{\theta+n\sigma}(V)\right]
$$
the multiplicity of the irreducible representation $V^G_{\lambda+n\mu}$ in the restriction 
$S_{\theta+n\sigma}(V)\vert_G$.

The following Theorem, which is a particular case of Theorem \ref{theoreme-B-second}, was first obtained 
by Manivel \cite{Manivel97} when $G=\U(E)$ and by Brion \cite{Brion93} when $\sigma=(1)$. 
The following version was obtained by Montagard \cite{Montagard96}: 
the only improvement that we obtain here is condition $a)$.

\begin{theo}
Let $\sigma$ a partition of length $\dim V$ and $\mu=\pi(\sigma)$.

The following statements are equivalent:
\begin{itemize}
\item[a)] $[V^G_{n\mu}:S_{n\sigma}(V)]=1$, for all $n\geq 1$.

\item[b)] For any couple $(\lambda,\theta)$ the increasing sequence 
$[V^G_{\lambda+n\mu}:S_{\theta+n\sigma}(V)]$ has a limit.

\item[c)] The algebra $\Sym(\mathbb{W_\sigma}^*)$ has finite $G_\mu$-multiplicities.
\end{itemize}

If these statements hold the limit of the sequence $[V^G_{\lambda+n\mu}:S_{\theta+n\sigma}(V)]$ is equal to 
the multiplicity of $V^{G_\mu}_{\lambda}$ in the $G_\mu$-module
$$
\Sym(\mathbb{W}^*_\sigma)\otimes S_{\theta_{[1]}}(V_{[1]})\otimes S_{\theta_{[2]}}(V_{[2]})\otimes \cdots \otimes 
S_{\theta_{[p]}}(V_{[p]}).
$$
\end{theo}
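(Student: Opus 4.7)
The plan is to recognize this theorem as a direct specialization of the second version of Theorem~\ref{theoreme-B-second} (the one assuming (\ref{eq:B-tilde-B})) to the morphism $\rho:G\to\tilde{G}=\U(V)$, with the compatible Borel pair $\rho(B)\subset\tilde{B}$ fixed at the beginning of the section.

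First, I would match the data. The $\tilde{G}$-dominant weight is $\tilde{\mu}:=\sigma$, and its image $\mu=\pi(\sigma)=\sum_j \sigma_j\alpha_j$ is $G$-dominant and satisfies $\mm(\mu,\sigma)\neq 0$ by the lemma preceding Theorem~\ref{theoreme-B-second}; the same lemma gives $\rho(G_\mu)\subset \tilde{G}_\sigma$ and $\rho(\pgot_\mu)\subset\tilde{\pgot}_\sigma$, so that in the notation of Theorem~\ref{theoreme-B-second} one has $H=G_\mu$ and $\tilde{H}=\tilde{G}_\sigma$. By Borel--Weil, $V^{\tilde{G}}_{\theta+n\sigma}=S_{\theta+n\sigma}(V)$, hence the multiplicity $\mm_\rho(\lambda+n\mu,\theta+n\sigma)$ appearing in Theorem~\ref{theoreme-B-second} is precisely the plethysm multiplicity $[V^G_{\lambda+n\mu}:S_{\theta+n\sigma}(V)]$.

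Second, I would identify the relevant $H$-modules. The partition $\sigma$ is constant on each block $\{j_{k-1}+1,\ldots,j_k\}$, so $\tilde{G}_\sigma$ is the Levi subgroup $\U(V_{[1]})\times\cdots\times\U(V_{[p]})$; the nilradical $\tilde{\ngot}_\sigma$ consists, as recalled above, of the block-strictly-lower-triangular endomorphisms; and the irreducible $\tilde{G}_\sigma$-representation with highest weight $\theta$ splits as $S_{\theta_{[1]}}(V_{[1]})\otimes\cdots\otimes S_{\theta_{[p]}}(V_{[p]})$. The $G_\mu$-module $\mathbb{W}$ of Theorem~\ref{theoreme-B-second} is by definition $\tilde{\ngot}_\sigma/\overline{\rho}(\ngot_\mu)$, which is exactly the module $\mathbb{W}_\sigma$ in the statement.

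Third, with these identifications, the equivalences $a)\Leftrightarrow b)\Leftrightarrow c)$ and the limit formula of Theorem~\ref{theoreme-B-second} specialize verbatim to the statement to be proved: condition $c)$ is literally the finiteness of $G_\mu$-multiplicities in $\Sym(\mathbb{W}_\sigma^*)$, and the limit becomes the multiplicity of $V^{G_\mu}_\lambda$ in $\Sym(\mathbb{W}_\sigma^*)\otimes S_{\theta_{[1]}}(V_{[1]})\otimes\cdots\otimes S_{\theta_{[p]}}(V_{[p]})$. There is no real obstacle; the only thing to check carefully is the Levi/nilradical description of $(\tilde{G}_\sigma,\tilde{\ngot}_\sigma)$ and the compatibility of the induced map $\overline{\rho}:\ngot_\mu\to\tilde{\ngot}_\sigma$ from the lemma of Montagard quoted above. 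The main bookkeeping point is to keep straight the two levels of highest-weight data, $(\lambda,\theta)$ for the pair $(G,\tilde{G})$ versus the corresponding dominant weights for the Levi pair $(G_\mu,\tilde{G}_\sigma)$, so that the general formula of Theorem~\ref{theoreme-B-second} produces the tensor-product-of-Schur-functors expression in the conclusion.
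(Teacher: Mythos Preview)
Your proposal is correct and matches the paper's approach: the paper presents this theorem explicitly as ``a particular case of Theorem~\ref{theoreme-B-second}'' and gives no further proof, so your specialization argument---matching $\tilde{\mu}=\sigma$, $H=G_\mu$, $\tilde{H}=\tilde{G}_\sigma=\prod_k\U(V_{[k]})$, $\mathbb{W}=\mathbb{W}_\sigma$, and $V_\theta^{\tilde{H}}=\bigotimes_k S_{\theta_{[k]}}(V_{[k]})$---is exactly what is intended. The only cosmetic difference is that here Montagard's lemma says $d\rho$ already sends $\ngot_\mu$ injectively into $\tilde{\ngot}_\sigma$, so the paper writes $\mathbb{W}_\sigma=\tilde{\ngot}_\sigma/\rho(\ngot_\mu)$ rather than using the factored map $\overline{\rho}$.
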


\bigskip

%%%%%%%%%%%%%%%%%%%%%%%%%%%%%%%%%%%%%%%%%%%%%%%%%%%%%%%%%%%%%%%%

%%%%%%%%%%%%%%%%%%%%%%%%%%%%%%%%%%%%%%%%%%%%%%%%%%%%%%%%%%%%%%%%%%%%%%%%%%

{\small

}

\end{document}